\documentclass[a4paper,10pt,twoside]{article}

\usepackage[T1]{fontenc}
\usepackage[utf8]{inputenc}
\usepackage[english]{babel}

\usepackage[top=3cm,bottom=3.2cm,left=3.2cm,right=3.2cm]{geometry}

\usepackage{amscd}
\usepackage{amsthm}
\usepackage{bbm}
\usepackage{bm}
\usepackage{braket}
\usepackage{changepage}
\usepackage[babel]{csquotes}
\usepackage{enumitem}
\usepackage{fancyhdr}
\usepackage[flushmargin,hang,symbol]{footmisc}
\usepackage{graphicx}
\usepackage{indentfirst}
\usepackage[intlimits]{mathtools}
\usepackage{mathrsfs}
\usepackage{microtype}
\usepackage{newtxtext,newtxmath}
\usepackage[english]{varioref}
\usepackage{xcolor}
\usepackage{xurl}

\usepackage{hyperref}
    \hypersetup{
	colorlinks = true,
	citecolor = black,
	filecolor = black,
	linkcolor = black,
	urlcolor = black
    }

\selectfont

\makeatletter
\renewcommand{\@maketitle}{
	\begin{flushleft}\Large\@title\par\vspace{3ex}
	\large\@author\par\vspace{0ex}
	\end{flushleft}}
\makeatother

\makeatletter
\renewcommand{\footnoterule}{
  \kern -3pt
  \vspace{1.6\baselineskip}
  \hrule width 0.4\columnwidth
  \vspace{0.8\baselineskip} 
  \kern -0.4pt
}
\makeatother

\newcommand{\mail}[1]{\href{mailto:#1}{\texttt{#1}}}

\newcommand{\N}{\mathbb{N}}

\newcommand{\R}{\mathbb{R}}

\newcommand{\cA}{\mathcal{A}}
\newcommand{\cB}{\mathcal{B}}
\newcommand{\cD}{\mathcal{D}}
\newcommand{\cE}{\mathcal{E}}
\newcommand{\cI}{\mathcal{I}}
\newcommand{\cO}{\mathcal{O}}
\newcommand{\cR}{\mathcal{R}}
\newcommand{\cU}{\mathcal{U}}

\newcommand{\ccI}{\mathscr{I}}

\newcommand{\bu}{\bm{u}}
\newcommand{\bx}{\bm{x}}
\newcommand{\bX}{\bm{X}}
\newcommand{\by}{\bm{y}}
\newcommand{\bY}{\bm{Y}}
\newcommand{\bz}{\bm{z}}

\renewcommand{\epsilon}{\varepsilon}
\renewcommand{\theta}{\vartheta}
\renewcommand{\rho}{\varrho}
\renewcommand{\phi}{\varphi}

\newcommand{\mo}[1]{\mathopen{#1}}
\newcommand{\mc}[1]{\mathclose{#1}}

\newcommand{\1}{\mathbbm{1}}

\newcommand{\eqdef}{\mathrel{\overset{\mathrm{def}}{=}}}

\DeclarePairedDelimiter{\rounds}{{\mathopen{(}}}{{\mathclose{)}}}
\DeclarePairedDelimiter{\bracks}{\lbrack}{\rbrack}
\DeclarePairedDelimiter{\braces}{\lbrace}{\rbrace}
\DeclarePairedDelimiter{\bracket}{\langle}{\rangle}
\DeclarePairedDelimiter{\abs}{\lvert}{\rvert}
\DeclarePairedDelimiter{\norm}{\lVert}{\rVert}

\newcommand{\Lped}[3]{L_{#3}^{#1}\rounds{#2}}

\DeclareMathOperator{\bP}{\mathbf{P}}
\DeclareMathOperator{\bE}{\mathbf{E}}

\DeclareMathOperator{\Var}{\mathbf{Var}}

\DeclareMathOperator{\Leb}{\bm{\lambda}}
\DeclareMathOperator{\Vol}{\mathbf{Vol}}

\DeclareMathOperator{\cl}{cl}
\DeclareMathOperator{\supp}{supp}

\DeclareMathOperator*{\argmin}{arg\,min}

\DeclareMathOperator*{\essinf}{ess\,inf}
\DeclareMathOperator*{\esssup}{ess\,sup}

\newenvironment{system}
{\left\lbrace\begin{array}{@{}l@{}}}
{\end{array}\right.}

\theoremstyle{plain}

    \newtheorem{proposition}{Proposition}[section]

	\newtheorem{corollary}{Corollary}[section]

\theoremstyle{definition}

\theoremstyle{remark}
	\newtheorem{remark}{Remark}[section]
	
\begin{document}


\title{\textbf{\Large{Integrated expectile-based measures of inequality}}}
\author{\textbf{Ignacio Cascos $\bm{\cdot}$ Marco Tarsia}} 
\date{}

\maketitle

{\let\thefootnote\relax\footnote{
	\textbf{Ignacio Cascos} \newline
	Departamento de Estadística, Universidad Carlos III de Madrid, 28911 Leganés (Madrid), Spain \newline
	E\:\!-mail: \mail{ignacio.cascos@uc3m.es} \newline
	Web page: \href{https://halweb.uc3m.es/esp/Personal/personas/icascos/eng/perso.html}{\texttt{https://halweb.uc3m.es/esp/Personal/personas/icascos/eng/perso.html}}
    \par\smallskip\noindent
    \textbf{Marco Tarsia} \newline
    Researcher, Italy \newline
    E\:\!-mail: \mail{marcotarsia90@gmail.com} \newline
    Web page: \href{https://marcotarsia.github.io/}{\texttt{https://marcotarsia.github.io}}
	}
}

\thispagestyle{empty}

\par\noindent\rule{\linewidth}{0.8pt}\par

\bigskip\bigskip


\begin{description} 

	\item[Abstract.] Expectiles provide a class of asymmetric location functionals that incorporate the magnitude of deviations and admit a natural geometric interpretation.
    Building on their structural consistency with the convex stochastic order, this paper introduces a family of integrated expectile functionals for measuring risk, dispersion, and inequality.
    The proposed functionals admit analytical representations as integrals of expectiles across asymmetry levels and, for a distinguished subclass, geometric representations in terms of weighted areas of star-shaped sets encoding distributional asymmetry.
    This approach yields a new class of expectile-based inequality indices, constituting a natural counterpart to classical Gini-type measures while preserving desirable monotonicity and consistency properties.
    Empirical counterparts are derived in closed form and admit explicit decompositions over finite samples.
    The framework extends naturally to multivariate settings through directional expectile constructions, leading to measures capable of capturing genuinely joint forms of multivariate dispersion and inequality.
	
	\item[Key words.] Expectiles, integrated expectile functionals, convex stochastic order, risk and deviation measures, inequality indices. 

	\item[AMS 2000 subject classifications.] 60E15, 62G30, 62P20, 91B30, 91G70. 

\end{description}

\bigskip\bigskip

\noindent\hfil\rule{0.5\textwidth}{.8pt}


\section*{Introduction}
\label{sec:introduction}

The quantitative assessment of dispersion, variability, and inequality constitutes a central theme across probability theory, statistics, economics, and mathematical finance, and plays a fundamental role in decision making under uncertainty. Classical approaches to these questions have been predominantly based on quantile-derived functionals, most prominently through the Lorenz curve (see~\cite{lorenz_1905}) and the associated Gini index (see~\cite{gini_1914}), which provide canonical summaries of distributional concentration and inequality. These constructions, further developed in statistical and economic analysis (see, e.g.,~\cite{gastwirth_1972,atkinson_1970,shorrocks_1980}), are deeply connected with stochastic orderings and majorization theory; see~\cite{schur_1923,marshall-olkin-arnold_2011,muller-stoyan_2002}. Within this framework, variability and inequality emerge as manifestations of partial ordering relations on probability distributions, most notably through the convex stochastic order.

Parallel developments in the theory of risk measurement have led to the formulation of law-invariant functionals capable of quantifying dispersion and uncertainty in a consistent and axiomatic manner. Foundational contributions establishing coherent and convex risk measures can be found in~\cite{artzner-delbaen-eber-heath_1999,follmer-schied_2002}, while spectral risk measures, defined as integral functionals of quantiles, are introduced in~\cite{acerbi_2002}. These developments highlighted the structural role of stochastic orderings in ensuring consistency and interpretability of dispersion and risk measures; see, among others,~\cite{muller_1997,bauerle-muller_2006,shaked-shanthikumar_2007}. More broadly, these approaches emphasize the importance of order-sensitive and asymmetric functionals as tools for describing heterogeneous and heavy-tailed phenomena.

Within this broader context, expectiles, introduced in~\cite{newey-powell_1987}, have emerged as natural and structurally rich alternatives to quantiles. Defined as minimizers of asymmetric quadratic loss functions, expectiles incorporate the size of deviations in a fundamentally different manner from rank-based functionals. In contrast to quantiles, which depend solely on distributional ordering, expectiles reflect the full geometry of deviations, thereby providing a smooth and intrinsically tail-sensitive characterization of asymmetry and dispersion. In particular, expectiles belong to the class of elicitable functionals, making them especially suitable for statistical inference and decision-theoretic applications. Their interpretation as generalized quantiles, elicitable functionals, and risk measures has been developed and studied in several works (see~\cite{jones_1994,bellini_2012,bellini-klar-muller-r.gianin_2014,bellini-di.bernardino_2017,kratschmer-zahle_2017}). Their asymptotic and statistical properties have been analyzed in~\cite{holzmann-klar_2016}, while further connections between expectiles, stochastic orderings, and measures of variability have been established in subsequent work (see~\cite{bellini-klar-muller_2018,eberl-klar_2023,bellini-fadina-wang-wei_2022}).

Beyond their analytical definition, expectiles admit a natural geometric interpretation. When indexed by asymmetry levels, expectiles generate families of nested regions that encode the distributional structure of random variables across the entire spectrum of asymmetry. This geometric perspective connects expectiles with central regions, depth functions, and multivariate dispersion analysis; see~\cite{mosler_2002,cascos-ochoa_2021}. Multivariate extensions of expectile-based constructions have been investigated in~\cite{herrmann-hofert-mailhot_2018,maume.deschamps-rulliere-said_2017}, further reinforcing the role of expectiles as structurally meaningful descriptors of distributional geometry.

Despite these advances, the systematic use of expectiles as fundamental building blocks for inequality measurement remains comparatively underdeveloped. Classical inequality indices, including the Gini index and its multivariate generalizations (see~\cite{koshevoy-mosler_1997,mosler_2023}), rely on rank-based representations that do not directly capture the extent of deviations. Recent work has emphasized the importance of developing alternative inequality measures capable of incorporating richer structural information; see~\cite{dong-tille-giorgi-guandalini_2024,el.methni-girard-laveur_2025}. In this regard, expectiles provide a natural framework for constructing dispersion and inequality measures grounded in both geometric and probabilistic structure.

The present paper develops a unified framework for constructing risk, dispersion, and inequality measures based on integrated expectile functionals. The central idea is that expectiles, viewed as geometric generators of distributional regions, induce natural aggregation mechanisms whose global properties reflect the underlying dispersion and asymmetry. By integrating suitable expectile-based quantities, we obtain law-invariant functionals satisfying consistency properties with respect to the convex stochastic order. This construction yields, in particular, a new class of expectile-based inequality indices, which admit, in certain cases, a geometric interpretation in terms of the weighted area of associated star-shaped sets. In this sense, the proposed indices constitute a natural counterpart to classical Lorenz–Gini measures, while providing a complementary perspective grounded in the geometry of expectiles rather than distributional ranks. This geometric formulation provides a unified language for asymmetric evaluation and comparison of risks, bridging classical inequality measurement and modern approaches based on stochastic orderings and elicitable functionals.

The framework extends naturally to multivariate settings, where expectile regions defined through directional projections generate inequality measures capable of capturing genuinely multidimensional dispersion. These constructions establish direct connections between expectile-based functionals, multivariate risk measures, and stochastic ordering theory; see also~\cite{molchanov-cascos_2016} and~\cite{mastrogiacomo-tarsia_2026}.

The remainder of the paper is organized as follows. Section~\ref{sec:expectile-dispersion} introduces the expectile framework and its geometric interpretation. Section~\ref{sec:risk-and-deviation-measures} develops the construction of integrated expectile-based functionals and establishes their main properties, including their consistency with the convex stochastic order. Section~\ref{sec:expectile-starshaped-set} introduces the expectile star-shaped set and investigates its geometric and analytical features. Section~\ref{sec:inequality-index} defines the associated inequality indices and studies their structural and ordering properties. Section~\ref{sec:multivariate-generalizations} presents multivariate extensions.
The paper concludes with Section~\ref{sec:conclusions}, where the main findings are summarized and several directions for future research are discussed.
Supplementary empirical results and technical proofs are collected in Appendices~\ref{sec:appendix_empirical-implementation},~\ref{sec:appendix_convexity},~\ref{sec:appendix_maximization}, and~\ref{sec:appendix_proof-of-K_d}.


\section{Expectile geometry and measures of dispersion}
\label{sec:expectile-dispersion}

Expectiles offer a natural and conceptually unifying bridge between stochastic orderings and geometric representations of dispersion.
Their intrinsic structure enables variability and inequality to be articulated through convex order-consistent functionals, endowed with both rigorous analytical meaning and a clear geometric interpretation.
This section lays out the foundational framework, progressing from the underlying stochastic order to the expectile function and its associated regions, establishing the conceptual and methodological groundwork for expectile-based measures of dispersion.


\subsection{Preliminaries and notation}
\label{subsec:preliminaries-notation}

We consider a complete probability space $\rounds{\Omega\:\!,\cA,\bP}$.
For any dimension $d \in \N^{\:\! *}\;\!\! \coloneqq \N \setminus \Set{\! 0 \!}$ and any integrability parameter $p \in \mo{[}\;\!\! 1 ,\:\!\! \infty \mc{[}$, we denote by 
\[
\Lped{p}{\bP}{d} 
\]
the quotient space of $\R^d$-\:\!valued, $\cA$-measurable random variables $\bX = { \bracks{ X_1, \dots, X_d } }^{\top \:\!\!} \!$ defined on $\Omega$ and admitting a finite $p$\:\!-moment with respect to $\bP$\;\!\!, that is,
\[
\bE\:\!\bracks[\big]{ {\norm{\bX}}^p } \equiv \int\nolimits_\Omega \:\!\! { \norm{\bX(\omega)} }^p d\:\!\!\bP(\omega) < \infty \:\! ,
\]
with the usual identification under $\bP$-\:\!a.s. equality.
Here, $\bE \:\! \bracks{\, \bm{\cdot} \,} \equiv \bE_{\;\! \bP \;\!\!} \:\! \bracks{\, \bm{\cdot} \,}$ stands for the expectation operator relative to $\bP$\;\!\!, while $\norm{\, \bm{\cdot} \,} \equiv \norm{\, \bm{\cdot} \,}_{\:\! \R^d \;\!\!}$ refers to the Euclidean norm on $\R^d$\:\!\!.
By convention, any deterministic vector $\bx \in \R^d \;\!\!$ is identified with the corresponding constant element of $\Lped{p}{\bP}{d}$.

We also adopt the following shorthand notation:
\[
\Lped{p \;\!\!}{\bP}{} \coloneqq \Lped{p \;\!\!}{\bP}{1} \:\! ,
\quad
\R_{\:\!+} \;\!\! \coloneqq \mo{[} 0 \:\!,\:\!\! \infty \mc{[} \:\! ,
\quad
\Lped{p}{\bP}{+} \coloneqq \Lped{p}{\bP}{1,+ \:\!\!} \:\! ,
\]
\vspace{-3.5ex}
\[
\R_{\:\!+}^{\:\! d} \coloneqq (\R_{\:\!+})^d \:\!\! \:\! ,
\quad
\Lped{p}{\bP}{d,+ \:\!\!} \coloneqq L^{p} \;\!\! \big{(} \rounds{\Omega\:\!,\cA,\bP} ; \R_{\:\!+}^d \:\! \big{)} \:\! .
\]

We next restrict attention to the scalar setting, corresponding to
\[
d = 1 \:\! .
\]
Accordingly, we henceforth drop boldface notation for scalar quantities, writing, for instance, $x \in \R$ in place of $\bx \in \R^d \;\!\!$, and $X \in \Lped{p \;\!\!}{\bP}{}$ rather than $\bX \in \Lped{p}{\bP}{d}$.
Throughout the sequel, we shall work predominantly with the integrability parameter
\[
p = 1 \:\! .
\]

For convenience, we also recall the standard notation for the positive and negative parts of a scalar $a \in \R$, namely $a_+ \coloneqq \max \:\!\! \Set{\! 0 \:\!,\;\!\! a \!}$ and $a_- \coloneqq (-a)_{\:\!+}$. In particular, one has $a = a_+ \;\!\! - a_-$ and $\abs{\:\! a \:\!} = a_+ \;\!\! + a_-$.



We now introduce the convex stochastic order, which provides a fundamental criterion for comparing integrable random variables sharing the same mean in terms of their dispersion. 

Formally, given any integrable random variables $X , Y \in \Lped{1 \;\!\!}{\bP}{}$, we write
\[
X \le_{\;\!\text{cx}} \;\!\! Y \;\!\!
\quad \Longleftrightarrow \quad
\forall \ \phi \colon \R \to \R \text{ convex} \:\! , \ \bE\:\!\bracks{ \phi(X) } \le \bE\:\!\bracks{ \phi(Y) } \:\! .
\]
Equivalently, the relation $X \le_{\;\!\text{cx}} \;\!\! Y$ holds if and only if $\bE\:\!\bracks{ X } = \bE\:\!\bracks{ Y }$ and, for every $t \in \R$,
\[
\int_{-\infty}^{t} \! F_{\:\! X}(s) \, ds
\:\! \ge \;\!\!
\int_{-\infty}^{t} \! F_{\;\! Y}(s) \, ds \:\! ,
\]
where $F_{\:\! X}$ and $F_{\;\! Y}$ denote the cumulative distribution functions of $X$ and $Y$\;\!\!, respectively; see~\cite{ruschendorf_1981,muller_1996,burgert-ruschendorf_2006}.
In this sense, the convex order formalizes the intuitive notion that $Y$ is more widely dispersed than $X$\;\!\!, while preserving the common mean.

The convex order occupies a prominent position in risk measurement, income inequality, and stochastic dominance theory.
Within the present framework, it serves as a natural benchmark for assessing the consistency (or monotonicity) of expectile-based functionals.
More precisely, a dispersion or inequality measure $\Psi(\;\! \bm{\cdot} \;\!)$ on $\Lped{1 \;\!\!}{\bP}{}$ is said to be consistent with the convex order whenever, for any $X , Y \in \Lped{1 \;\!\!}{\bP}{}$,
\begin{equation}
\label{eq:cx-consistency} 
X \le_{\;\!\text{cx}} \;\!\! Y \;\!\!
\quad \Longrightarrow \quad 
\Psi(X) \le \Psi(Y) \:\! .
\end{equation}
This property ensures that any increase in distributional spread, as assessed through the convex order, is reflected in a larger value of the measure, thereby preserving the intuitive notion of inequality or variability.

The connection between convex orderings and expectiles will be of central relevance in the developments that follow.
Indeed, as will be shown below, inclusions between suitable expectile-based sets turn out to encode the convex stochastic order.
This perspective naturally positions expectiles as fundamental building blocks for the construction of dispersion and inequality measures consistent with~$\le_{\;\!\text{cx}}$.


\subsection{Univariate expectiles}
\label{subsec:univariate-expectiles}

Let $\alpha \in \mo{]} 0 \:\!,\:\!\! 1 \;\!\!\mc{[}$ be fixed.
The $\alpha$-expectile $e_X(\alpha)$ associated with a real-valued random variable $X \in \Lped{1 \;\!\!}{\bP}{}$ is defined as the unique real number satisfying the first-order balance condition
\begin{equation} 
\label{eq:expectiles_1-ord}
(1 - \alpha) \;\!\! \bE\:\!\bracks[\big]{\;\!\! \big{(} X - e_X(\alpha) \big{)}_{\;\!\! - \:\!\!} } = \alpha \;\!\! \bE\:\!\bracks[\big]{\;\!\! \big{(} X - e_X(\alpha) \big{)}_{\;\!\! + \:\!\!} } \:\! .
\end{equation}
This formulation expresses a directional equilibrium between expected positive and negative deviations from $e_X(\alpha)$, weighted respectively by $\alpha$ and $1 - \alpha$.
Since it involves only first moments, \eqref{eq:expectiles_1-ord} is well defined for all integrable random variables and may be viewed as a natural extension of the classical least-squares perspective
originating in~\cite{newey-powell_1987}.

When $X \in \Lped{2}{\bP}{}$, the same value $e_X(\alpha)$ admits an equivalent asymmetric least-squares representation,
\begin{equation} 
\label{eq:expectiles_2-ord}
e_X(\alpha)
=
\argmin_{x \:\! \in \:\! \R} \ \bE\:\!\bracks[\Big]{ \alpha \bracks[\big]{\;\!\! {(X - x)}_+ }^{2}\:\!\! + (1 - \alpha) \bracks[\big]{\;\!\! {(X - x)}_- }^{2} } \:\! ,
\end{equation}
which yields the traditional interpretation of expectiles as solutions to an asymmetric quadratic
optimization problem, while remaining fully consistent with the primary $\Lped{1 \;\!\!}{\bP}{}$\:\!-\:\!based definition given above.
This quadratic formulation embodies a directional penalization of deviations from $x$, assigning weight $\alpha$ to positive excesses and $1 - \alpha$ to negative ones.

Consequently, expectiles retain quantitative information on the magnitude of deviations and are therefore not purely rank-based, unlike quantiles.
This feature will play a central role in the geometric constructions developed throughout the paper.

In this sense, expectiles may be regarded as a generalization of the mean, in that symmetric averaging
is replaced by a weighted equilibrium between positive and negative deviations.
The parameter $\alpha$ governs the degree of asymmetry in this balance: values $\alpha > 1/2$ place greater emphasis on upper-tail outcomes, whereas $\alpha < 1/2$ assign more weight on the lower tail.
When $\alpha = 1/2$, the penalization becomes symmetric and the resulting solution coincides with the mean, namely
\begin{equation}
\label{eq:mean} 
e_X(1/2\:\!) = \bE\:\!\bracks{ X } \:\! .
\end{equation}

A comprehensive account of the fundamental properties of expectiles is firmly established in the literature, starting from the seminal contribution of~\cite{newey-powell_1987} and including, among others,~\cite{bellini-klar-muller-r.gianin_2014}.
We do not attempt to reproduce this material here.
Instead, we refer the reader to these foundational contributions for a detailed and systematic treatment.

Throughout the present work, we deliberately refrain from invoking any general property of expectiles unless it is explicitly stated and used at the point where it becomes relevant.
In this way, the exposition remains entirely self-contained with respect to the specific arguments developed later.

Accordingly, we proceed directly to our first original results.
We begin by recalling the basic monotonicity property of expectiles.
If $X \le Y$ (\:\!$\bP$-\:\!a.s.), then $e_X(\alpha) \le e_Y(\alpha)$ for all $\alpha \in \mo{]} 0 \:\!,\:\!\! 1 \;\!\!\mc{[}$; moreover, if $\bP\:\!\bracks{ X < Y \:\! } > 0$ (that is, $X \ne Y$), then the inequality is strict: $e_X(\alpha) < e_Y(\alpha)$ for all $\alpha \in \mo{]} 0 \:\!,\:\!\! 1 \;\!\!\mc{[}$.

The next proposition gathers fundamental one-sided bounds for the upper and lower expectile functions in the nonnegative case.
These estimates are direct consequences of the first-order condition~\eqref{eq:expectiles_1-ord} and will play a recurring role in the subsequent analysis.
As shown later (cf. Section~\ref{subsec:two-point-expectile}), their structure is essentially sharp; see also Remark~\ref{rem:lower-upper-bounds} below.

For the proof, we employ the notation $a \land b \coloneqq \min \Set{\:\!\! a , \:\!\! b \:\!\!}$ for $a,b \in \R$.

\begin{proposition}
\label{prop:lower-upper-bounds} 
Let $X \in \Lped{1}{\bP}{+}$. Then, for every $\alpha \in \mo{]} 0 \:\!,\:\!\! 1/2 \mc{]}$,
\begin{equation}
\label{eq:lower-upper-bounds} 
e_X(\alpha) \ge \frac{\alpha}{1 - \alpha} \bE\:\!\bracks{ X } \:\! , \quad e_X(1 - \alpha) \le \frac{1 - \alpha}{\alpha} \bE\:\!\bracks{ X } \;\! .
\end{equation}
In addition, if $X \ne 0$ (\:\!$\bP$-\:\!a.s.) and $\alpha \ne 1/2$, then both inequalities in~\eqref{eq:lower-upper-bounds} are strict.
\end{proposition}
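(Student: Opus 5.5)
The plan is to work directly from the first-order condition~\eqref{eq:expectiles_1-ord}, rewritten using the identity $(X-e)_+ = (X-e) + (X-e)_-$. Write $e \coloneqq e_X(\beta)$ for a generic level $\beta \in \mo{]} 0 \:\!,\:\!\! 1 \;\!\!\mc{[}$. Substituting into~\eqref{eq:expectiles_1-ord} gives $(1-\beta)\bE[(X-e)_-] = \beta(\bE[X]-e) + \beta\bE[(X-e)_-]$. Hence
\[
(1-2\beta)\,\bE\bracks[\big]{(X-e)_-} = \beta\,(\bE\:\!\bracks{ X } - e) .
\]
Symmetrically, using $(X-e)_- = (X-e)_+ - (X-e)$, one gets $(2\beta-1)\bE[(X-e)_+] = (1-\beta)(e-\bE[X])$.

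For the lower bound, take $\beta=\alpha\le 1/2$ and $e=e_X(\alpha)$. The key elementary estimate uses $X\ge 0$: pointwise $(X-e)_- = (e-X)_+ \le e_+$. When $e \ge 0$, this gives $\bE[(X-e)_-]\le e$. First I must check that $e_X(\alpha)\ge 0$. This follows from the monotonicity property recalled just before the statement, applied to $X\ge 0$ and using $e_0(\alpha)=0$. The identity then gives $\alpha(\bE[X]-e)\le(1-2\alpha)e$, i.e.\ $\alpha\bE[X]\le(1-\alpha)e$, which is the first inequality. For the upper bound, take $\beta = 1-\alpha \ge 1/2$ and $e=e_X(1-\alpha)\ge 0$. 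The second identity reads $(1-2\alpha)\bE[(X-e)_+] = \alpha(e-\bE[X])$. I then bound $\bE[(X-e)_+]\le \bE[X]$, since $(X-e)_+\le X$ when $e\ge 0$ and $X\ge0$. This gives $\alpha e \le (1-2\alpha)\bE[X]+\alpha\bE[X] = (1-\alpha)\bE[X]$, the second inequality.

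Strictness, for $\alpha\ne 1/2$ and $X\ne0$, is the step needing care. Since $1-2\alpha>0$, it suffices to make the pointwise bounds strict in expectation.

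For the lower bound I need $\bE[(e-X)_+] < e$. Equality would force $(e-X)_+ = e$ a.s., i.e.\ $X=0$ on the event where $X<e$, and $X\le 0$ would have to hold otherwise too. So this requires $e>0$, which follows from strict monotonicity because $X\ge 0$ with $\bP[X>0]>0$. Then $\bE[(e-X)_+]=e$ would force $X=0$ a.s., a contradiction.

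For the upper bound I need $\bE[(X-e)_+] < \bE[X]$. Since $\bE[X]-\bE[(X-e)_+] = \bE[\min(X,e)]$, this reduces to $\bE[X\land e]>0$. That holds because $e>0$ (again by strict monotonicity) and $X>0$ with positive probability.

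The main obstacle is bookkeeping the sign of $e$ and these equality cases. The algebra itself is routine.
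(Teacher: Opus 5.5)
Your proof is correct and is essentially the paper's: since $\bE[(X-e)_-] = e - \bE[e\land X]$, your identity $(1-2\beta)\bE[(X-e)_-] = \beta(\bE[X]-e)$ is exactly the paper's representation $(1-\alpha)e_X(\alpha) = \alpha\bE[X] + (1-2\alpha)\bE[e_X(\alpha)\land X]$, and both of your pointwise bounds (and their strict versions) reduce to $\bE[e\land X]\ge 0$ (resp.\ $>0$). You also explicitly derive $e_X(\cdot)\ge 0$ (resp.\ $>0$) from (strict) monotonicity against the constant $0$, a step the paper leaves implicit and which the strictness claim genuinely needs.
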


\begin{proof}
Fix $\alpha \in \mo{]} 0 \:\!,\:\!\! 1 \;\!\!\mc{[}$. By the elementary decompositions
\[
\begin{system}
\:\! X = \big{(} X - e_X(\alpha) \big{)}_{\;\!\! + \:\!\!} + \:\! e_X(\alpha)\:\!\!\land\:\!\!X \:\! , \\[0.75ex]
\:\! e_X(\alpha) = \big{(} e_X(\alpha) - X \big{)}_{\;\!\! + \:\!\!} + \:\! e_X(\alpha)\:\!\!\land\:\!\!X \:\! ,
\end{system}
\]
we obtain
\[
\begin{system}
\:\! \bE\:\!\bracks[\big]{ \big{(} X - e_X(\alpha) \big{)}_{\;\!\! + \:\!\!} }\;\!\! + \bE\:\!\bracks[\big]{ e_X(\alpha)\:\!\!\land\:\!\!X } = \bE\:\!\bracks{ X } \:\! , \\[1ex]
\:\! \bE\:\!\bracks[\big]{ \big{(} e_X(\alpha) - X \big{)}_{\;\!\! + \:\!\!} }\;\!\! + \bE\:\!\bracks[\big]{ e_X(\alpha)\:\!\!\land\:\!\!X } = e_X(\alpha) \:\! .
\end{system}
\]

Since $\big{(} X - e_X(\alpha) \big{)}_{\;\!\! - \:\!\!} \:\!\! = \big{(} e_X(\alpha) - X \big{)}_{\;\!\! + \:\!\!}$\:\!, the expectile condition~\eqref{eq:expectiles_1-ord} becomes
\[
(1 - \alpha) \:\! \braces[\Big]{\:\!  e_X(\alpha) - \bE\:\!\bracks[\big]{ e_X(\alpha)\:\!\!\land\:\!\!X } }
=
\alpha \:\! \braces[\Big]{\:\!  \bE\:\!\bracks{ X } - \bE\:\!\bracks[\big]{ e_X(\alpha)\:\!\!\land\:\!\!X } } \:\! ,
\]
which yields
\begin{equation}
\label{eq:lower-upper-bounds_proof} 
e_X(\alpha) = \frac{\alpha\:\!\!\bE\:\!\bracks{ X } + (1 - 2\:\!\alpha)\:\!\!\bE\:\!\bracks[\big]{ e_X(\alpha)\:\!\!\land\:\!\!X }}{1 - \alpha} \;\! .
\end{equation}

For the lower bound in~\eqref{eq:lower-upper-bounds}, note that $\alpha \le 1/2$, together with $e_X(\,\bm{\cdot}\,) \ge 0$ and $X \ge 0$ (\:\!$\bP$-\:\!a.s.), ensures
\[
(1 - 2\:\!\alpha)\:\!\!\bE\:\!\bracks[\big]{ e_X(\alpha)\:\!\!\land\:\!\!X } \ge 0 \:\! .
\]

Similarly, to derive the upper bound for $e_X(1 - \alpha)$, we apply~\eqref{eq:lower-upper-bounds_proof} at level $1 - \alpha$:
\[
e_X(1 - \alpha) = \frac{(1 - \alpha)\:\!\!\bE\:\!\bracks{ X } + (2\:\!\alpha - 1)\:\!\!\bE\:\!\bracks[\big]{ e_X(1 - \alpha)\:\!\!\land\:\!\!X }}{\alpha} \;\! .
\]
Here, when $\alpha \le 1/2$,
\[
(2\:\!\alpha - 1)\:\!\!\bE\:\!\bracks[\big]{ e_X(1 - \alpha)\:\!\!\land\:\!\!X } \le 0 \:\! ,
\]
and the conclusion follows.

If $X \ne 0$ (\:\!$\bP$-\:\!a.s.) and $\alpha \neq 1/2$, then both $\bE\:\!\bracks[\big]{ e_X(\alpha)\:\!\!\land\:\!\!X }$ and $\bE\:\!\bracks[\big]{ e_X(1 - \alpha)\:\!\!\land\:\!\!X }$ are strictly positive, and the coefficients $1 - 2\:\!\alpha$ and $2\:\!\alpha - 1$ above are nonzero. 
Hence the discarded terms in the two estimates contribute with a definite sign, and the two inequalities in~\eqref{eq:lower-upper-bounds} become strict. \qedhere
\end{proof}

\begin{remark}
\label{rem:lower-upper-bounds} 
Since $e_X(\,\bm{\cdot}\,)$ is nondecreasing and satisfies~\eqref{eq:mean}, the bounds of Proposition~\ref{prop:lower-upper-bounds} admit the following equivalent reformulation: for every $\alpha \in \mo{]} 0 \:\!,\:\!\! 1/2 \mc{]}$,
\[
\frac{\alpha}{1 - \alpha} \bE\:\!\bracks{ X } \le e_X(\alpha) \le \bE\:\!\bracks{ X } \le e_X(1 - \alpha) \le \frac{1 - \alpha}{\alpha} \bE\:\!\bracks{ X } \:\! .
\]
If, in addition, $X$ is non-degenerate (\:\!$\bP$-\:\!a.s.), then the monotonicity of $e_X(\,\bm{\cdot}\,)$ is strict, and all the above inequalities become narrow as soon as $\alpha \ne 1/2$.
\end{remark}

The next corollary extends the upper bound of Proposition~\ref{prop:lower-upper-bounds} to arbitrary integrable random variables through comparison with their positive part.
In particular, no nonnegativity assumption is required.

\begin{corollary}
\label{cor:upper-bound} 
Let $X \in \Lped{1 \;\!\!}{\bP}{}$. Then, for every $\alpha \in \mo{]} 0 \:\!,\:\!\! 1/2 \mc{]}$,
\begin{equation}
\label{eq:upper-bound_1} 
e_X(1 - \alpha) \le \frac{1 - \alpha}{\alpha} \bE\:\!\bracks{ X_{\:\! +} } \:\! .
\end{equation}
In addition, if $\bP\:\!\bracks{ X < 0 \:\! } > 0$, then the inequality in~\eqref{eq:upper-bound_1} is strict. In this case, in particular,
\begin{equation}
\label{eq:upper-bound_2} 
e_X(1 - \alpha) < \frac{1 - \alpha}{\alpha} \bE\:\!\bracks[\big]{ \abs{\:\! X \:\!} } \:\! .
\end{equation}
\end{corollary}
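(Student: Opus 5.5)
The plan is to compare $X$ with its positive part and then invoke Proposition~\ref{prop:lower-upper-bounds}. Since $X \le X_{\:\!+}$ $\bP$-a.s., the monotonicity of expectiles recalled just before Proposition~\ref{prop:lower-upper-bounds} gives $e_X(1-\alpha) \le e_{X_+}(1-\alpha)$. Because $X_{\:\!+} \in \Lped{1}{\bP}{+}$, the upper bound in~\eqref{eq:lower-upper-bounds}, applied to $X_{\:\!+}$, yields $e_{X_+}(1-\alpha) \le \frac{1-\alpha}{\alpha}\bE[X_{\:\!+}]$. Chaining the two inequalities proves~\eqref{eq:upper-bound_1}.

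For strictness, assume $\bP[X<0]>0$. On the event $\{X<0\}$ we have $X < 0 = X_{\:\!+}$, so $\bP[X < X_{\:\!+}] > 0$. The strict form of the monotonicity property then gives $e_X(1-\alpha) < e_{X_+}(1-\alpha)$ for every $\alpha$. Combining this with the non-strict bound from Proposition~\ref{prop:lower-upper-bounds} makes~\eqref{eq:upper-bound_1} strict. This route needs no condition on whether $X_{\:\!+}$ vanishes.

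For~\eqref{eq:upper-bound_2}, we use $\bE[X_{\:\!+}] \le \bE[X_{\:\!+}] + \bE[X_{\:\!-}] = \bE[\abs{X}]$. Since $\frac{1-\alpha}{\alpha}>0$, multiplying preserves the inequality, and the already strict inequality in~\eqref{eq:upper-bound_1} carries over. Alternatively, $\bP[X<0]>0$ gives $\bE[X_{\:\!-}]>0$, which makes this second step strict as well.

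The only point needing care is that the monotonicity property holds in its strict form for a general (not necessarily nonnegative) integrable pair $X \le X_{\:\!+}$. That property is stated for arbitrary $X \le Y$ with $\bP[X<Y]>0$, so it applies here directly. Everything else is routine chaining of inequalities.
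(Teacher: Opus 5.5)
Your proposal is correct and follows the paper's proof essentially step for step. It uses monotonicity of expectiles under $X \le X_+$, then the upper bound of Proposition~\ref{prop:lower-upper-bounds} for the nonnegative variable $X_+$, and finally strict monotonicity on $\{X<0\}$ together with $\bE[X_+] \le \bE[\abs{X}]$ for the strict claims; your remark that strictness needs no assumption on whether $X_+$ vanishes makes explicit something the paper leaves implicit.
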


\begin{proof}
Since $X \le X_{\:\! +}$ (\:\!$\bP$-\:\!a.s.), the monotonicity of expectiles yields $e_X(1 - \alpha) \le e_{X_{\:\! +}}(1 - \alpha)$ for all $\alpha \in \mo{]} 0 \:\!,\:\!\! 1/2 \mc{]}$.
By the upper inequality in~\eqref{eq:lower-upper-bounds}, applied to the nonnegative random variable $X_{\:\! +}$, we obtain
\[
e_{X_{\:\! +}}(1 - \alpha)
\le
\frac{1 - \alpha}{\alpha} \bE\:\!\bracks{ X_{\:\! +} } ,
\]
which ultimately proves~\eqref{eq:upper-bound_1}.

If $\bP\:\!\bracks{ X < 0 \:\! } > 0$ (i.e. $X \notin \Lped{1}{\bP}{+}$), then $X < X_{\:\! +}$ on a positive probability set and $X_{\:\! +} < \abs{\:\! X \:\!}$ on the same set. Strict monotonicity of expectiles makes the inequality in~\eqref{eq:upper-bound_1} strict, and combining this with the strictness of the respective expectations yields~\eqref{eq:upper-bound_2}. \qedhere
\end{proof}

For any $X \in \Lped{1 \;\!\!}{\bP}{}$, the expectile function $e_X \colon \mo{]} 0 \:\!,\:\!\! 1 \;\!\!\mc{[} \:\!\! \to \R$ is well defined, strictly increasing, and continuous. It therefore admits a continuous extension to the closed interval $\mo{[} 0 \:\!,\:\!\! 1 \mc{]}$, determined by
\[
e_X(0) \coloneqq \essinf X \:\! , \quad e_X(1) \coloneqq \esssup X \:\! .
\]

As a consequence, $e_X$ defines a strictly increasing homeomorphism from $\mo{[} 0 \:\!,\:\!\! 1 \mc{]}$ onto
\[
\mo{[} \essinf X ,\;\!\! \esssup X \mc{]}
\]
(the latter interval being possibly unbounded).
The canonical extension of its inverse to $\R$, denoted by $e_X^{-1} \colon \R \to \mo{[} 0 \:\!,\:\!\! 1 \mc{]}$ and referred to as the inverse expectile function, admits the explicit representation
\begin{equation} 
\label{eq:inverse-expectile}
e_X^{-1}(x)
=
\begin{cases}
\:\! 0 \:\! , & \text{if $x \in \mo{]} -\infty \:\!,\:\!\! \essinf X \mc{]}$,} \\[0.75ex]
\:\! {\displaystyle \frac{ \bE\:\!\bracks[\big]{\;\!\! {(X - x)}_- } }{ \bE\:\!\bracks[\big]{ \abs{\:\! X - x \:\! } } } } , & \text{if $x \in \mo{]} \essinf X ,\;\!\! \esssup X \mc{[}$,} \\[0.75ex]
\:\! 1 \:\! , & \text{if $x \in \mo{[} \esssup X ,\:\!\! \infty \mc{[}$.}
\end{cases}
\end{equation}

The mapping $e_X^{-1}$ is continuous and strictly increasing on $\mo{]} \essinf X ,\;\!\! \esssup X \mc{[}$, and thus establishes a one-to-one correspondence between levels $\alpha \in \mo{]} 0 \:\!,\:\!\! 1 \;\!\!\mc{[}$ and thresholds $x = e_X(\alpha) \in \R$.

The convex stochastic order admits a complete description in terms of expectiles: for any $X , Y \in \Lped{1 \;\!\!}{\bP}{}$,
\begin{equation} 
\label{eq:cx-expectiles}
X \le_{\;\!\text{cx}} \;\!\! Y \;\!\!
\quad \Longleftrightarrow \quad
\begin{system}
\:\! \forall \ \alpha \in \mo{]} 0 \:\!,\:\!\! 1/2 \mc{]} , \ e_Y(\alpha) \le e_X(\alpha) \:\! , \\[0.5ex]
\:\! \forall \ \alpha \in \mo{[}\;\!\! 1/2 ,\:\!\! 1 \;\!\!\mc{[} , \ e_X(\alpha) \le e_Y(\alpha) \:\! .
\end{system}
\end{equation}
In particular, this characterization entails that $\bE\:\!\bracks{ X } = \bE\:\!\bracks{ Y }$ (see~\eqref{eq:mean}).

We further note that, for any $X , Y \in \Lped{1 \;\!\!}{\bP}{}$ satisfying $X \le_{\;\!\text{cx}} \;\!\! Y$\;\!\!, one has
$e_X(1/2\:\!) = e_Y(1/2\:\!)$ and
\[
\begin{system}
\:\! e_X^{-1}(x) \le e_Y^{-1}(x) \quad \text{for all } x \le e_X(1/2\:\!) \:\! , \\[0.75ex]
\:\! e_Y^{-1}(x) \le e_X^{-1}(x) \quad \text{for all } x \ge e_X(1/2\:\!) \:\! .
\end{system}
\]
Conversely, the above inequalities for the inverse expectile functions $e_X^{-1}$ and $e_Y^{-1}$ imply that $X \le_{\;\!\text{cx}} \;\!\! Y$\:\!\!.

Beyond these formal properties, a conceptual distinction with respect to quantiles deserves emphasis.
The computation of each $e_X(\alpha)$, $\alpha \in \mo{]} 0 \:\!,\:\!\! 1 \;\!\!\mc{[}$, depends on the entire distribution of $X$: both sides of~\eqref{eq:expectiles_1-ord} involve expectations of the positive and negative parts of $X - e_X(\alpha)$, thus aggregating information over the full support of $X$\;\!\!.
In contrast, quantiles are local functionals, determined solely by the distribution function at a prescribed probability level.
Expectiles thus encode a global, distribution-wide balance of deviations, reflecting asymmetries across the entire tail structure rather than at a single probability threshold.


\subsection{Expectile regions}
\label{subsec:expectile-regions}

Let $X \in \Lped{1 \;\!\!}{\bP}{}$. For any $\alpha \in \mo{]} 0 \:\!,\:\!\! 1/2 \mc{]}$, the \emph{expectile region} of $X$ at level $\alpha$ is defined as the interval
\begin{equation}
\label{eq:expectiles-regions} 
E_X(\alpha) \eqdef \mo{[} e_X(\alpha) \:\!,\;\!\! e_X(1 - \alpha) \mc{]} \:\! .
\end{equation}

Each interval $E_X(\alpha)$ comprises all values of $x \in \R$ lying between the lower and upper expectiles corresponding to the symmetric levels $\alpha$ and $1 - \alpha$.
Intuitively, the region provides a geometric representation of how the distribution extends in opposite directions from its center once asymmetry is taken into account.
The two endpoints therefore encode a weighted notion of centrality around the mean.
These regions provide a multiscale description of distributional imbalance as the expectile level varies.
Thus, the passage from individual expectiles to expectile regions marks the first step from a pointwise description toward a geometric representation of the distribution.

As $\alpha$ decreases from $1/2$ to $0$, the regions $E_X(\alpha)$ expand monotonically, in a symmetric fashion about $\bE\:\!\bracks{ X }$, capturing the progressive inclusion of tail behavior. 
In other words, the family
\[
\big\{\:\! E_X(\alpha) \big\}_{\:\!\! \alpha \in \mo{]} 0 \:\!,\:\!\! 1/2 \mc{]}}
\] 
forms a nested sequence of intervals conveying the dispersion structure of $X$\;\!\!.

It is therefore natural to compare random variables through the inclusion of their expectile regions.
This intuition is formalized by~\eqref{eq:cx-expectiles}: for any $Y \in \Lped{1 \;\!\!}{\bP}{}$,
\begin{equation}
\label{eq:cx-expectiles-regions} 
X \le_{\;\!\text{cx}} \;\!\! Y \;\!\!
\quad \Longleftrightarrow \quad
\forall \ \alpha \in \mo{]} 0 \:\!,\:\!\! 1/2 \mc{]} \:\! , \ E_X(\alpha) \subseteq E_{\:\! Y}(\alpha) \:\! .
\end{equation}
Hence, the convex stochastic order admits a geometric characterization through the nesting of expectile regions.
This provides a geometric reformulation of convex-order comparisons, whereby probabilistic dominance is expressed through set inclusion.

From this perspective, the convex stochastic order compares random variables through the relative extent of their expectile regions.
The inclusion $E_X(\alpha) \subseteq E_{\:\! Y}(\alpha)$ for all $\alpha \in \mo{]} 0 \:\!,\:\!\! 1/2 \mc{]}$ indicates that, at every asymmetry level, the distribution of $Y$ assigns relatively more probability mass to extreme outcomes than that of $X$\;\!\!. 
Accordingly, $Y$ exhibits greater dispersion (while preserving the same mean), in full agreement with the interpretation of the convex order $X \le_{\;\!\text{cx}} \;\!\! Y$\:\!\!.

As a direct consequence of Proposition~\ref{prop:lower-upper-bounds}, we obtain the following universal upper bound on the length of the one-dimensional expectile region in the nonnegative case.
The estimate provides a simple quantitative link between the size of expectile regions and the mean of the underlying distribution.

For this purpose, we write
\[
\Leb\:\!\bracks{\, \bm{\cdot} \,} \equiv \Leb^{\;\!\! 1 \;\!\!} \bracks{\, \bm{\cdot} \,}
\]
for the one-dimensional Lebesgue measure on the Borel $\sigma$-\:\!algebra of $\R$.

\begin{proposition}
\label{cor:lower-upper-bounds} 
Let $X \in \Lped{1}{\bP}{+} \setminus \Set{\! 0 \!}$. Then, for every $\alpha \in \mo{]} 0 \:\!,\:\!\! 1/2 \mc{[}$,
\begin{equation*}
\Leb\:\!\bracks[\big]{ E_X(\alpha) }
\equiv \;\!
e_X(1 - \alpha) - e_X(\alpha)
<
\frac{1 - 2\:\!\alpha}{\alpha\:\!(1 - \alpha)} \bE\:\!\bracks{ X } \:\! .
\end{equation*}
\end{proposition}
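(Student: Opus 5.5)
The plan is to subtract the two one-sided bounds of Proposition~\ref{prop:lower-upper-bounds}. Since $X \in \Lped{1}{\bP}{+} \setminus \Set{\! 0 \!}$, we have $X \ne 0$ on a set of positive probability. Because $\alpha \in \mo{]} 0 \:\!,\:\!\! 1/2 \mc{[}$ excludes $\alpha = 1/2$, the strict version of Proposition~\ref{prop:lower-upper-bounds} applies and gives
\[
e_X(1 - \alpha) < \frac{1 - \alpha}{\alpha} \bE\:\!\bracks{ X } \:\! , \qquad e_X(\alpha) > \frac{\alpha}{1 - \alpha} \bE\:\!\bracks{ X } \:\! .
\]
The identification $\Leb\:\!\bracks[\big]{ E_X(\alpha) } = e_X(1 - \alpha) - e_X(\alpha)$ follows from the definition~\eqref{eq:expectiles-regions}. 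It also uses $e_X(\alpha) \le e_X(1-\alpha)$, which holds by the monotonicity of $e_X$ recalled in Remark~\ref{rem:lower-upper-bounds}, so that the region is a genuine interval.

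Subtracting the second strict inequality from the first yields
\[
e_X(1 - \alpha) - e_X(\alpha) < \Big( \frac{1 - \alpha}{\alpha} - \frac{\alpha}{1 - \alpha} \Big) \bE\:\!\bracks{ X } \:\! .
\]
The only computation left is the algebraic simplification
\[
\frac{1-\alpha}{\alpha} - \frac{\alpha}{1-\alpha} = \frac{(1-\alpha)^2 - \alpha^2}{\alpha(1-\alpha)} = \frac{1 - 2\alpha}{\alpha(1-\alpha)} \:\! ,
\]
which gives exactly the claimed bound.

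The one point requiring care is the meaning of the hypothesis $X \ne 0$. The strictness clause of Proposition~\ref{prop:lower-upper-bounds} requires $\bE\:\!\bracks{ e_X(\alpha) \land X } > 0$. For nonnegative $X$ that is not a.s.\ zero, this holds once $e_X(\alpha) > 0$. In turn, $e_X(\alpha) > 0$ follows from the non-strict lower bound $e_X(\alpha) \ge \frac{\alpha}{1-\alpha}\bE\:\!\bracks{ X }$ together with $\bE\:\!\bracks{ X } > 0$. Hence the strict inequalities are justified, and one strict inequality alone would already suffice for the strict conclusion.
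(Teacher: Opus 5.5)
Your argument is correct and follows the paper's route: the paper presents this bound as a direct consequence of Proposition~\ref{prop:lower-upper-bounds}, obtained by subtracting the two strict one-sided estimates and simplifying $\frac{1-\alpha}{\alpha}-\frac{\alpha}{1-\alpha}=\frac{1-2\alpha}{\alpha(1-\alpha)}$. You also verify that $\bE[e_X(\alpha)\land X]>0$, using $\bE[X]>0$ and the non-strict lower bound; this fills in the strictness step that the paper states only briefly.
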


Beyond this setting, the length of $E_X(\alpha)$ can be controlled, without any sign assumption, by the first absolute moment around an appropriately chosen centre within the region, revealing that the geometric extent of expectile regions is intrinsically constrained by familiar measures of statistical dispersion.

This observation is made precise in the following result; see also Remark~\ref{rem:median-vs-mean}.

\begin{proposition}
\label{prop:lenght-expectiles-regions} 
Let $X \in \Lped{1 \;\!\!}{\bP}{}$. Then, for every $\alpha \in \mo{]} 0 \:\!,\:\!\! 1/2 \mc{]}$,
\begin{equation}
\label{eq:lenght-expectiles-regions} 
\Leb\:\!\bracks[\big]{ E_X(\alpha) }
\le
\frac{1 - 2\:\!\alpha}{\alpha} \! \inf_{m \:\! \in \:\! E_X\;\!\!(\alpha) } \:\!\! \bE\:\!\bracks[\big]{\:\! \abs{\:\! X \:\!\! - m \:\!} \:\!} .
\end{equation}
In addition, if $X$ is non-degenerate and $\alpha \ne 1/2$, then the inequality in~\eqref{eq:lenght-expectiles-regions} is strict.
\end{proposition}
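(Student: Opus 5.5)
The plan is to reduce the bound to two exact identities obtained from the first-order condition~\eqref{eq:expectiles_1-ord}, after centring $X$ at an arbitrary point $m$ of the region. The crude route of applying Corollary~\ref{cor:upper-bound} to $X-m$ and to $m-X$ only yields the constant $(1-\alpha)/\alpha$ and must be bypassed.

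\textbf{Step 1: reduction to $m=0$.} Fix $m \in E_X(\alpha)$ and set $Y \coloneqq X - m$. Expectiles are translation equivariant, which is immediate from~\eqref{eq:expectiles_1-ord}. Hence $\Leb\:\!\bracks{E_Y(\alpha)} = \Leb\:\!\bracks{E_X(\alpha)}$ and $\bE\:\!\bracks{\abs{Y}} = \bE\:\!\bracks{\abs{X-m}}$. Write $l \coloneqq e_Y(\alpha)$ and $u \coloneqq e_Y(1-\alpha)$. Since $m$ lies in the region, $l \le 0 \le u$, and it suffices to prove $u - l \le \frac{1-2\alpha}{\alpha}\bE\:\!\bracks{\abs{Y}}$.

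\textbf{Step 2: exact identities.} Use $\bE\:\!\bracks{(Y-u)_+} - \bE\:\!\bracks{(Y-u)_-} = \bE\:\!\bracks{Y} - u$. The condition~\eqref{eq:expectiles_1-ord} at level $1-\alpha$ reads $\alpha\,\bE\:\!\bracks{(Y-u)_-} = (1-\alpha)\,\bE\:\!\bracks{(Y-u)_+}$. Eliminating $\bE\:\!\bracks{(Y-u)_-}$ gives
\[
u = \bE\:\!\bracks{Y} + \tfrac{1-2\alpha}{\alpha}\,\bE\:\!\bracks{(Y-u)_+} .
\]
Symmetrically, the condition at level $\alpha$ gives
\[
l = \bE\:\!\bracks{Y} - \tfrac{1-2\alpha}{\alpha}\,\bE\:\!\bracks{(Y-l)_-} .
\]
Subtracting,
\[
u - l = \tfrac{1-2\alpha}{\alpha}\big(\bE\:\!\bracks{(Y-u)_+} + \bE\:\!\bracks{(Y-l)_-}\big).
\]

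\textbf{Step 3: the bound.} Since $u \ge 0$, we have $(Y-u)_+ \le Y_+$. Since $l \le 0$, we have $(Y-l)_- \le Y_-$. Adding these and using $Y_+ + Y_- = \abs{Y}$ gives the inequality for every $m \in E_X(\alpha)$, hence for the infimum. The case $\alpha = 1/2$ is trivial: both sides vanish.

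\textbf{Step 4: strictness, the delicate part.} Let $X$ be non-degenerate and $\alpha < 1/2$. Then $e_X$ is strictly increasing and takes values strictly between $\essinf X$ and $\esssup X$ on $\mo{]}0,1\mc{[}$, so $\essinf Y < l < u < \esssup Y$.
\begin{itemize}
\item If $u > 0$: on the event $\{Y > 0\}$, which has positive probability because $\esssup Y > u > 0$, we get $(Y-u)_+ < Y_+$. So the first comparison of Step 3 is strict.
\item If $u = 0$: then $l < 0$, and on $\{Y < 0\}$, which has positive probability because $\essinf Y < l < 0$, we get $(Y-l)_- < Y_-$. So the second comparison is strict.
\end{itemize}
In either case the inequality is strict for each fixed $m$.

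To pass strictness to the infimum, note that $m \mapsto \bE\:\!\bracks{\abs{X-m}}$ is continuous. The region $E_X(\alpha)$ is compact, with finite endpoints since $\alpha \in \mo{]}0,1/2\mc{[}$. Hence the infimum is attained at some $m^\ast \in E_X(\alpha)$, and strictness at $m^\ast$ yields strictness in~\eqref{eq:lenght-expectiles-regions}.
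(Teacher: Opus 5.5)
Your proposal is correct and follows essentially the same route as the paper. You use the same exact identity
$e_X(1-\alpha)-e_X(\alpha)=\frac{1-2\alpha}{\alpha}\{\mathbf{E}[(X-e_X(\alpha))_-]+\mathbf{E}[(X-e_X(1-\alpha))_+]\}$
derived from the first-order condition, and the same pointwise comparisons with $(X-m)_\mp$ for $m\in E_X(\alpha)$; centring at $m$ is only cosmetic. Your strictness step (the case split $u>0$ versus $u=0$, plus attainment of the infimum on the compact region) spells out details that the paper only asserts briefly.
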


\begin{proof}
Fix $\alpha \in \mo{]} 0 \:\!,\:\!\! 1 \;\!\!\mc{[}$. By~\eqref{eq:expectiles_1-ord}, we get
\[
\bE\:\!\bracks[\big]{\;\!\! \big{(} X - e_X(\alpha) \big{)}_{\;\!\! + \:\!\!} }
=
\frac{1 - \alpha}{\alpha} \bE\:\!\bracks[\big]{\;\!\! \big{(} X - e_X(\alpha) \big{)}_{\;\!\! - \:\!\!} } \:\! ,
\]
from which
\begin{equation}
\label{eq:proof-expectiles_1-ord} 
\bE\:\!\bracks[\big]{\;\!\! \big{(} X - e_X(\alpha) \big{)} } \equiv
\bE\:\!\bracks[\big]{\;\!\! \big{(} X - e_X(\alpha) \big{)}_{\;\!\! + \:\!\!} }\;\!\! - \bE\:\!\bracks[\big]{\;\!\! \big{(} X - e_X(\alpha) \big{)}_{\;\!\! - \:\!\!} } = \frac{1 - 2\:\!\alpha}{\alpha} \bE\:\!\bracks[\big]{\;\!\! \big{(} X - e_X(\alpha) \big{)}_{\;\!\! - \:\!\!} } \:\! .
\end{equation}
Equivalently,
\[
\bE\:\!\bracks[\big]{\;\!\! \big{(} X - e_X(1 - \alpha) \big{)} } = \frac{2\:\!\alpha - 1}{\alpha} \bE\:\!\bracks[\big]{\;\!\! \big{(} X - e_X(1 - \alpha) \big{)}_{\;\!\! + \:\!\!} } \:\! .
\]
Therefore,
\begin{equation}
\label{eq:lenght-expectiles-regions_proof} 
\begin{split}
e_X(1 - \alpha) - e_X(\alpha) &\equiv \bE\:\!\bracks[\big]{\;\!\! \big{(} X - e_X(\alpha) \big{)} } - \bE\:\!\bracks[\big]{\;\!\! \big{(} X - e_X(1 - \alpha) \big{)} } \\[1ex]
&= \frac{1 - 2\:\!\alpha}{\alpha} \:\! \braces[\Big]{\;\! \bE\:\!\bracks[\big]{\;\!\! \big{(} X - e_X(\alpha) \big{)}_{\;\!\! - \:\!\!} }\;\!\! + \bE\:\!\bracks[\big]{\;\!\! \big{(} X - e_X(1 - \alpha) \big{)}_{\;\!\! + \:\!\!} } } .
\end{split}
\end{equation}

Now, let $\alpha \le 1/2$. Then, for any $m \in \R$ such that $e_X(\alpha) \le m \le e_X(1 - \alpha)$, that is, $m \in E_X(\alpha)$, one has
\begin{equation}
\label{eq:conclusion} 
\begin{system}
\:\! \big{(} X - e_X(\alpha) \big{)}_{\;\!\! - \:\!\!} \le \big{(} X \:\!\! - m \big{)}_{\;\!\! - \:\!\!} \:\! , \\[0.75ex]
\:\! \big{(} X - e_X(1 - \alpha) \big{)}_{\;\!\! + \:\!\!} \le \big{(} X \:\!\! - m \big{)}_{\;\!\! + \:\!\!} \:\! ,
\end{system}
\end{equation}
thus
\[
\begin{system}
\:\! \bE\:\!\bracks[\big]{\;\!\! \big{(} X - e_X(\alpha) \big{)}_{\;\!\! - \:\!\!} } \le \bE\:\!\bracks[\big]{\;\!\! \big{(} X \:\!\! - m \big{)}_{\;\!\! - \:\!\!} } \:\! , \\[1ex]
\:\! \bE\:\!\bracks[\big]{\;\!\! \big{(} X - e_X(1 - \alpha) \big{)}_{\;\!\! + \:\!\!} } \le \bE\:\!\bracks[\big]{\;\!\! \big{(} X \:\!\! - m \big{)}_{\;\!\! + \:\!\!} } \:\! ,
\end{system}
\]
and, by inserting these bounds into~\eqref{eq:lenght-expectiles-regions_proof}, we obtain
\[
e_X(1 - \alpha) - e_X(\alpha) \le \frac{1 - 2\:\!\alpha}{\alpha} \:\! \bE\:\!\bracks[\big]{\:\! \abs{\:\! X \:\!\! - m \:\!} \:\!} .
\]
Taking the infimum over all such $m$ yields the first claim.

If $X$ is non-degenerate (\:\!$\bP$-\:\!a.s.) and $\alpha \neq 1/2$, then for every $m \in E_X(\alpha)$ at least one of the two inequalities in~\eqref{eq:conclusion} is necessarily strict, and the strict version of~\eqref{eq:lenght-expectiles-regions} follows. \qedhere
\end{proof}

\begin{remark}
It is useful to record that, combining~\eqref{eq:expectiles_1-ord} with~\eqref{eq:proof-expectiles_1-ord} yields, for any $\alpha \in \mo{]} 0 \:\!,\:\!\! 1 \;\!\!\mc{[}$,
\begin{equation}
\label{eq:rem-expectiles_1-ord} 
e_X(\alpha) - \bE\:\!\bracks{ X }
=
\frac{2\:\!\alpha - 1}{1 - \alpha} \bE\:\!\bracks[\big]{\;\!\! \big{(} X - e_X(\alpha) \big{)}_{\;\!\! + \:\!\!} } \:\! .
\end{equation}
\end{remark}

Because $\bE\:\!\bracks{ X } \in E_X(\alpha)$ whenever $\alpha \le 1/2$, Proposition~\ref{prop:lenght-expectiles-regions} directly implies the next corollary. Its statement coincides with the upper bound in~\cite[Theorem~14]{eberl-klar_2023} for inter-expectile ranges.

\begin{corollary}
\label{cor:lenght-expectiles-regions_0} 
Let $X \in \Lped{1 \;\!\!}{\bP}{}$ non-degenerate. Then, for every $\alpha \in \mo{]} 0 \:\!,\:\!\! 1/2 \mc{[}$,
\begin{equation*}
\Leb\:\!\bracks[\big]{ E_X(\alpha) } < \frac{1 - 2\:\!\alpha}{\alpha} \:\! \bE\:\!\bracks[\Big]{\:\! \abs[\big]{\:\! X \:\!\! - \bE\:\!\bracks{ X } \:\!} \:\!} .
\end{equation*}
\end{corollary}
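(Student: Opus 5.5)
The plan is to specialise Proposition~\ref{prop:lenght-expectiles-regions} to the particular centre $m = \bE\:\!\bracks{ X }$, and to use its strict version.

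\textbf{Step 1 (admissibility of the centre).} We first check that $\bE\:\!\bracks{ X } \in E_X(\alpha)$ for every $\alpha \in \mo{]} 0 \:\!,\:\!\! 1/2 \mc{[}$. The expectile function is nondecreasing, so $e_X(\alpha) \le e_X(1/2\:\!) \le e_X(1 - \alpha)$. By~\eqref{eq:mean}, the middle term equals $\bE\:\!\bracks{ X }$. This places the mean inside the interval $E_X(\alpha)$.

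Alternatively, the membership can be read off~\eqref{eq:rem-expectiles_1-ord}. For $\alpha < 1/2$, the factor $(2\:\!\alpha - 1)/(1-\alpha)$ is negative while the expectation there is nonnegative, so $e_X(\alpha) \le \bE\:\!\bracks{ X }$. The symmetric argument at level $1-\alpha$ gives $e_X(1 - \alpha) \ge \bE\:\!\bracks{ X }$.

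\textbf{Step 2 (applying the strict bound).} We invoke Proposition~\ref{prop:lenght-expectiles-regions} under the present hypotheses: $X$ is non-degenerate and $\alpha \ne 1/2$. Its strict form gives
\[
\Leb\:\!\bracks[\big]{ E_X(\alpha) } < \frac{1 - 2\:\!\alpha}{\alpha} \inf_{m \in E_X(\alpha)} \bE\:\!\bracks[\big]{\:\! \abs{\:\! X - m \:\!} \:\!} .
\]
The infimum is at most the value of the objective at the admissible point $m = \bE\:\!\bracks{ X }$ found in Step 1. This yields the claim.

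\textbf{Main obstacle.} The only delicate point is the strictness: a strict inequality with an infimum, followed by a non-strict comparison of that infimum with one feasible value, must still produce a strict conclusion. Since $a < b \le c$ implies $a < c$, this is fine.

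As a safeguard, the pointwise statement can be obtained directly from the proof of Proposition~\ref{prop:lenght-expectiles-regions} instead. Substitute $m = \bE\:\!\bracks{ X }$ into~\eqref{eq:conclusion} and~\eqref{eq:lenght-expectiles-regions_proof}. Non-degeneracy then forces at least one of the two inequalities in~\eqref{eq:conclusion} to be strict on a set of positive probability. This gives strict inequality for this specific $m$, without passing through the infimum.
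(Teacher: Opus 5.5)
Your proposal is correct and matches the paper's argument: the paper likewise obtains the corollary directly from Proposition~\ref{prop:lenght-expectiles-regions} by noting that $\bE[X] \in E_X(\alpha)$ whenever $\alpha \le 1/2$, so that the infimum is bounded by its value at $m = \bE[X]$. Your pointwise safeguard via~\eqref{eq:conclusion} is also sound: non-degeneracy gives $e_X(\alpha) < \bE[X]$ and $\bP[X < \bE[X]] > 0$, which makes the first inequality there strict on a set of positive probability.
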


Proposition~\ref{cor:lower-upper-bounds} and Corollary~\ref{cor:lenght-expectiles-regions_0} yield two complementary upper bounds for the length of the expectile region $E_X(\alpha)$.
While the first relies on the nonnegativity of~$X$, the second holds without sign restrictions and is expressed in terms of the first absolute central moment.
When $X$ is nonnegative, both estimates apply simultaneously, leading to the following combined bound.

\begin{corollary}
\label{cor:lenght-expectiles-regions} 
Let $X \in \Lped{1}{\bP}{+}$ non-degenerate. Then, for every $\alpha \in \mo{]} 0 \:\!,\:\!\! 1/2 \mc{[}$,
\begin{equation}
\label{eq:lenght-expectiles-regions_cor} 
\Leb\:\!\bracks[\big]{ E_X(\alpha) } < \frac{1 - 2\:\!\alpha}{\alpha} \:\! \min \;\! \braces[\bigg]{ \frac{1}{1 - \alpha} \bE\:\!\bracks{ X } \;\! , \bE\:\!\bracks[\Big]{\:\! \abs[\big]{\:\! X \:\!\! - \bE\:\!\bracks{ X } \:\!} \:\!} } \:\! .
\end{equation}
\end{corollary}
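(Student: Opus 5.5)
The statement is Corollary~\ref{cor:lenght-expectiles-regions}. I plan to obtain it by applying the two earlier bounds separately and then taking the smaller of the two right-hand sides.

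Fix $X \in \Lped{1}{\bP}{+}$ non-degenerate and $\alpha \in \mo{]} 0 \:\!,\:\!\! 1/2 \mc{[}$.

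\textbf{First bound.} A non-degenerate nonnegative variable is not a.s.\ zero, so $X \in \Lped{1}{\bP}{+}\setminus\Set{0}$. Proposition~\ref{cor:lower-upper-bounds} then gives
\[
\Leb\:\!\bracks[\big]{ E_X(\alpha) } < \frac{1-2\alpha}{\alpha}\cdot\frac{1}{1-\alpha}\,\bE\:\!\bracks{X}.
\]
That proposition's proof is not shown in the paper. If it is needed, it follows by subtracting the two bounds of Proposition~\ref{prop:lower-upper-bounds}, which are strict here because $X\neq 0$ and $\alpha\neq 1/2$:
\[
\frac{1-\alpha}{\alpha}-\frac{\alpha}{1-\alpha}=\frac{(1-\alpha)^2-\alpha^2}{\alpha(1-\alpha)}=\frac{1-2\alpha}{\alpha(1-\alpha)}.
\]

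\textbf{Second bound.} Corollary~\ref{cor:lenght-expectiles-regions_0} needs only integrability and non-degeneracy, so it applies and gives
\[
\Leb\:\!\bracks[\big]{ E_X(\alpha) } < \frac{1-2\alpha}{\alpha}\,\bE\:\!\bracks[\big]{\abs{X-\bE\:\!\bracks{X}}}.
\]

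\textbf{Combining.} Since $1-2\alpha>0$ and $\alpha>0$, the common factor $(1-2\alpha)/\alpha$ is positive. A quantity that is strictly below each of two numbers is strictly below their minimum. Factoring out the positive constant therefore yields~\eqref{eq:lenght-expectiles-regions_cor}.

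There is no real obstacle. The only point needing care is checking that the hypotheses of both earlier results hold, namely that non-degeneracy excludes $X=0$ a.s. This ensures strictness in the first bound.
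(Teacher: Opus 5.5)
Your proposal is correct and follows the paper's own route: the paper also obtains this corollary by applying Proposition~\ref{cor:lower-upper-bounds} (valid since non-degeneracy excludes $X = 0$) and Corollary~\ref{cor:lenght-expectiles-regions_0} together, then taking the minimum of the two strict bounds. Your optional derivation of the first bound, by subtracting the two strict estimates of Proposition~\ref{prop:lower-upper-bounds}, is likewise how the paper presents that bound, as a direct consequence of that proposition.
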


A useful observation is that, in the nonnegative case considered here, the active bound in~\eqref{eq:lenght-expectiles-regions_cor} depends on the dispersion of $X$\;\!\!. 
When $X$ is only mildly spread around its mean, the term involving the first absolute central moment typically yields the sharper estimate, whereas for more widely dispersed random variables the linear bound $\bE\:\!\bracks{ X }/(1 - \alpha)$ may prevail. 
In this way, the effective upper bound on $\Leb\:\!\bracks[\big]{ E_X(\alpha) }$ reflects the interplay between the scale of $X$ and its variability.

Finally, in connection with the above results, note also that the prefactor $(1 - 2\:\!\alpha)/\alpha$ is positive on $\mo{]} 0 \:\!,\:\!\! 1/2 \mc{[}$, diverges as $\alpha \downarrow 0$, and decreases strictly to $0$ as $\alpha \uparrow 1/2$.

\begin{remark}
\label{rem:median-vs-mean} 

The effectiveness of the bound in Proposition~\ref{prop:lenght-expectiles-regions} depends crucially on the choice of the centre $m \in E_X(\alpha)$.
Since the function
\[
m \longmapsto \bE\:\!\bracks[\big]{\:\! \abs{\:\! X \:\!\! - m \:\!} \:\!}
\]
is continuous, convex, and coercive (i.e., it diverges as $\abs{\:\! m \:\!} \to \infty$), the infimum in~\eqref{eq:lenght-expectiles-regions} is in fact attained.
As this map is minimized precisely at the medians of $X$\;\!\!, the
best choice within $E_X(\alpha)$ is obtained by projecting a median onto this interval.
Accordingly, if a median lies strictly inside $E_X(\alpha)$, it delivers the sharpest possible bound; if it lies outside, the optimum is reached at the nearest endpoint.

This explains why the mean $\bE\:\!\bracks{ X }$, although natural from several perspectives, is not typically optimal here: it does not in general minimize $\bE\:\!\bracks[\big]{\:\! \abs{\:\! X \:\!\! - m \:\!} \:\!}$, especially for skewed or heavy-tailed distributions.
In such cases, a median, or its projection onto $E_X(\alpha)$, provides a markedly superior centre for the inequality.

Nevertheless, in Sections~\ref{sec:inequality-index} and~\ref{sec:multivariate-generalizations} we shall adopt the choice
\[
m = \bE\:\!\bracks{ X }
\]
in keeping with the conceptual and technical structure underlying those sections.

\end{remark}


\section{Integrated expectile functionals as risk and deviation measures}
\label{sec:risk-and-deviation-measures}




We consider a class of law-invariant scalar functionals obtained by integrating expectiles with respect to an auxiliary probability measure on the unit interval, in the spirit of the approach underlying expected shortfalls in~\cite{acerbi_2002}.
With a suitable sign convention, this scheme gives rise to positively homogeneous risk measures, while coherence is ensured whenever the integrating measure is supported on $\mo{[} 0 \:\!,\:\!\! 1/2 \mc{]}$.

Beyond risk assessment, expectiles also serve as a natural basis for generalized deviation measures, defined as integrals of the distance between expectiles and the mean of the underlying random variable, taken  with respect to a nontrivial measure on $\mo{[} 0 \:\!,\:\!\! 1 \mc{]}$.
Particular attention is devoted to measures that are symmetric about $1/2$, which lead to balanced and interpretable deviations.

In both settings, the efficient computation of the resulting integrals is essential for practical applicability.
For the risk-measure framework, explicit empirical implementations are provided later in the appendix.


\subsection{Integrated expectile risk measures}
\label{subsec:risk-measures}


In line with standard terminology (e.g.,~\cite{artzner-delbaen-eber-heath_1999,follmer-schied_2002}), a \emph{risk measure} associated with $\bP$ and $p \in \mo{[}\;\!\! 1 ,\:\!\! \infty \mc{[}$ is a functional $\cR \colon \Lped{p \;\!\!}{\bP}{} \to \R$ satisfying the following three properties.
\begin{description} 

\item[\textbf{($\cR$-\:\!nor)}] \hypertarget{item:Rnor}{\label{item:Rnor} --- \emph{Normalization}.
\begin{equation*} 
\label{eq:Rnor}
\cR(0) = 0 \;\! .
\end{equation*}
} 

\item[\textbf{($\cR$-\:\!inv)}] \hypertarget{item:Rinv}{\label{item:Rinv} --- \emph{Translation invariance}.
For any $X \in \Lped{p \;\!\!}{\bP}{}$ and $c \in \R$,
\begin{equation*} 
\label{eq:Rinv}
\cR(X + c) = \cR(X) - c \:\! .
\end{equation*}
} 

\item[\textbf{($\cR$-\:\!mon)}] \hypertarget{item:Rmon}{\label{item:Rmon} --- \emph{Monotonicity}.
For any $X , Y \in \Lped{p \;\!\!}{\bP}{}$,
\begin{equation*} 
\label{eq:Rmon}
X \le Y \text{ (\:\!$\bP$-\:\!a.s.)}
\quad \Longrightarrow \quad
\cR(Y) \le \cR(X) \:\! .
\end{equation*}
} 

\end{description}

A risk measure $\cR(\;\! \bm{\cdot} \;\!)$ is \emph{positively homogeneous} if:
\begin{description} 
\item[\textbf{($\cR$-\:\!hom)}] \hypertarget{item:Rhom}{\label{item:Rhom} --- \emph{Positive homogeneity}.
For any $X \in \Lped{p \;\!\!}{\bP}{}$ and $t \in \mo{]} 0 \:\!,\:\!\! \infty \mc{[}$,
\begin{equation*} 
\label{eq:Rhom}
\cR(t X) = t \;\! \cR(X) \:\! .
\end{equation*}
} 
\end{description}
It is called \emph{subadditive} when:
\begin{description} 
\item[\textbf{($\cR$-\:\!sub)}] \hypertarget{item:Rsub}{\label{item:Rsub} --- \emph{Subadditivity}.
For any $X , Y \in \Lped{p \;\!\!}{\bP}{}$,
\begin{equation*} 
\label{eq:Rsub}
\cR(X + Y) \le \cR(X) + \cR(Y) \:\! .
\end{equation*}
} 
\end{description}
Any risk measure that is both positively homogeneous and subadditive is said to be \emph{coherent}.


For any probability measure $\bm{\nu}$ on the Borel $\sigma$-\:\!algebra $\cB \big{(} \mo{[} 0 \:\!,\:\!\! 1 \mc{]} \big{)}$, we define the \emph{$\bm{\nu}$\:\!-\:\!integrated expectile} on $\Lped{p \;\!\!}{\bP}{}$ by 
\begin{equation}
\label{eq:integrated-expectile} 
R^{\:\! e}_{\bm{\nu}}(X)
\eqdef
- \:\!\! \int_{0}^{1} \:\!\!\! e_X(\alpha) \, d\bm{\nu}(\alpha) \:\! , \quad X \in \Lped{p \;\!\!}{\bP}{} \:\! .
\end{equation}
Recall that $\alpha \mapsto e_X(\alpha)$ is nondecreasing and hence Borel-measurable on $\mo{[} 0 \:\!,\:\!\! 1 \mc{]}$.
In the sequel, we restrict attention to choices of probability measures $\bm{\nu}$ for which $R^{\:\! e}_{\bm{\nu}}(X)$ is finite for every $X \in \Lped{p \;\!\!}{\bP}{}$.

The functional~\eqref{eq:integrated-expectile} is then a law-invariant risk measure satisfying positive homogeneity.
Here, law invariance is understood in the usual sense, namely dependence on probability laws under $\bP$ only.

Moreover, whenever $\bm{\nu}$ is supported on $\mo{[} 0 \:\!,\:\!\! 1/2 \mc{]}$, $R^{\:\! e}_{\bm{\nu}}$ is subadditive, and hence coherent.
In this case, by~\eqref{eq:cx-expectiles}, it is also consistent with the convex order (in the sense of~\eqref{eq:cx-consistency}):
\[
X \le_{\;\!\text{cx}} \;\!\! Y \;\!\!
\quad \Longrightarrow \quad
R^{\:\! e}_{\bm{\nu}}(X)
\le
R^{\:\! e}_{\bm{\nu}}(Y) \:\! .
\]
Thus, integrating expectiles over $\mo{[} 0 \:\!,\:\!\! 1/2 \mc{]}$ does not alter the convex-order ranking induced by the individual expectile levels, but merely aggregates it according to the preference structure specified by~$\bm{\nu}$\;\!\!.
The restriction
\[
\supp \:\! (\bm{\nu})
\subseteq
\mo{[} 0 \:\!,\:\!\! 1/2 \mc{]} 
\]
is generally essential for both the preceding coherence and consistency properties. 

Since the expectile function $e_X$ may exhibit unbounded behavior as $\alpha \downarrow 0$ or $\alpha \uparrow 1$ for unbounded random variables, the finiteness of the integral is typically influenced by both the tail behavior of $X$ and the mass of $\bm{\nu}$ near $0$ and $1$.
On the other hand, such issues do not arise, for instance, if $X$ is essentially bounded or if $\bm{\nu}$ is supported in a compact subinterval of $\mo{]} 0 \:\!,\:\!\! 1 \;\!\!\mc{[}$.

This aggregation mechanism is structurally analogous to spectral constructions based on quantiles, while retaining information on the magnitude of deviations through expectiles.

From an interpretative viewpoint, the $\bm{\nu}$-integrated expectile may be regarded as an aggregation of tail-sensitive location functionals associated with different degrees of asymmetry.
Each expectile level $\alpha$ reflects a specific balance between upper and lower tail contributions, while the integrating measure $\bm{\nu}$ determines how these local asymmetry assessments are combined into a global risk evaluation.
In this perspective, $\bm{\nu}$ acts not on scenarios themselves, but on asymmetry levels, and therefore serves as a preference parameter governing which portions of the expectile profile receive greater emphasis.
This provides a flexible mechanism for encoding different attitudes towards risk and tail sensitivity.

We discuss below some representative choices of the integrating measure $\bm{\nu}$\;\!\!.



\paragraph{\emph{Discrete measures.}} \hspace{-2ex}
Let $n \in \N^{\:\! *}\;\!\!$ and assume that $\bm{\nu}$ is a discrete probability measure with finite support $\Set{\! \alpha_1 , \dots , \alpha_n \!} \subset \mo{[} 0 \:\!,\:\!\! 1 \mc{]}$ and associated weights $p_i \;\!\! \coloneqq \bm{\nu}\:\!\big{(}\! \Set{\! \alpha_{\:\! i} \!} \!\big{)}$, $i = 1 , \dots , n$. In this case, the $\bm{\nu}$\:\!-\:\!integrated expectile~\eqref{eq:integrated-expectile} reduces to a finite convex combination of expectiles: for any $X \in \Lped{p \;\!\!}{\bP}{}$,
\[
R^{\:\! e}_{\bm{\nu}}(X)
=
- \;\!\! \sum_{i \:\! = \:\! 1}^{n} \:\! p_i \;\! e_X(\alpha_{\:\! i}) \:\! .
\]
This construction yields a coherent risk measure on $\Lped{p \;\!\!}{\bP}{}$ provided that $\alpha_{\:\! i} \le 1/2$ for all $i = 1 , \dots , n$.

\paragraph{\emph{Lebesgue measure.}} \hspace{-2ex}
Assume that $\bm{\nu} \equiv \Leb\;\!\!|_{\cB (\;\!\! \mo{[} 0 ,\:\!\! 1 \mc{]} \;\!\!)}$ is the Lebesgue measure on $\mo{[} 0 \:\!,\:\!\! 1 \mc{]}$.
We show that, if $p > 1$, then, for any $X \in \Lped{p \;\!\!}{\bP}{}$,
\begin{equation}
\label{eq:lebesgue-claim} 
\int_{0}^{1} \:\! \abs[\big]{\;\! e_X(\alpha) \:\!} \, d\alpha < \infty \:\! .
\end{equation}
Accordingly, $R^{\:\! e}_{\bm{\nu}}$ is well defined on the whole space $\Lped{p \;\!\!}{\bP}{}$ for any $p > 1$ (see~\eqref{eq:integrated-expectile}) and, as a purely analytic consequence of a change-of-variables
argument for monotone functions, $R^{\:\! e}_{\bm{\nu}}$ admits the following layer-cake (Cavalieri-type) representation:
\[
\int_{0}^{1} \! e_X(\alpha) \, d\alpha
=
- \;\!\! \int_{-\infty}^{0} \;\!\!\! e_X^{-1}(x) \, dx
\:\! + \:\!\!
\int_0^{\infty} \! \big{(} 1 - e_X^{-1}(x) \big{)} \;\! dx \;\! .
\]
Whenever the mapping $e_X$ can be identified with the quantile function of an integrable random variable, this formulation allows for an intuitive interpretation of $R^{\:\! e}_{\bm{\nu}}(X)$ as the mean of a random variable whose distribution is encoded by the expectiles of $X$\;\!\!.
Despite this analogy with the expectation, the induced risk measure $R^{\:\! e}_{\bm{\nu}}$ is, in general, not coherent.

Let us now establish~\eqref{eq:lebesgue-claim} for a fixed $X \in \Lped{p \;\!\!}{\bP}{}$, with $p > 1$. Since the expectile function is nondecreasing and thus any potential lack of integrability may only arise from a (positive) divergence as $\alpha \uparrow 1$, we restrict ourselves to the case
\[
\sup_{\alpha \:\! \in \:\! \mo{]} 0 ,\:\!\! 1 \;\!\!\mc{[}} \;\!\!\! e_X(\alpha) = \infty \:\! ,
\]
where it is enough to prove that there exists $\bar{\alpha} \in \mo{]} 0 \:\!,\:\!\! 1 \;\!\!\mc{[}$ such that
\begin{equation}
\label{eq:lebesgue-claim*} 
\int_{\bar{\alpha}}^{1} \:\! \abs[\big]{\;\! e_X(\alpha) \:\!} \, d\alpha < \infty \:\! .
\end{equation}
In fact, for any $\delta \in \mo{]} 0 \:\!,\:\!\! 1 \;\!\!\mc{[}$, it suffices to choose $\bar{\alpha} \equiv \bar{\alpha}_\delta > 1/2$ such that
\[
e_X(\bar{\alpha})
>
\frac{\bE\:\!\bracks{ X_{\:\! +} }}{1 - \delta} \:\! .
\]
Indeed, in this case, by~\eqref{eq:rem-expectiles_1-ord} and by the monotonicity of $e_X$, for any $\alpha \in \mo{]} \bar{\alpha} \:\!,\:\!\! 1 \;\!\!\mc{[}$ we have
\[
\delta \:\! e_X(\alpha)
<
\frac{2\:\!\alpha - 1}{1 - \alpha} \bE\:\!\bracks[\big]{\;\!\! \big{(} X - e_X(\alpha) \big{)}_{\;\!\! + \:\!\!} } \:\! .
\]
Moreover, since $e_X(\alpha) > 0$,
\[
\bE\:\!\bracks[\big]{\;\!\! \big{(} X - e_X(\alpha) \big{)}_{\;\!\! + \:\!\!} }
= \;\!\!
\int_{e_X(\alpha)}^{\infty} \:\!\!\! x \, dF_{\:\! X}(x)
\le \;\!\!
\int_{e_X(\alpha)}^{\infty} \:\!\!\! \frac{x^{\:\! p}}{{e_X(\alpha)}^{\:\! p - 1}} \, dF_{\:\! X}(x)
\:\! \le \:\!
\frac{\bE\:\!\bracks[\big]{ {\abs{\:\! X \:\!}}^p }}{{e_X(\alpha)}^{\:\! p - 1}} \:\! ,
\]
where $F_{\:\! X}$ denotes the cumulative distribution function of $X$\;\!\!. In conclusion, for any $\alpha \in \mo{]} \bar{\alpha} \:\!,\:\!\! 1 \;\!\!\mc{[}$,
\[
\abs[\big]{\;\! e_X(\alpha) \:\!}
\equiv
e_X(\alpha)
<
\bigg{(} \frac{2\:\!\alpha - 1}{\delta\:\!(1 - \alpha)} \bigg{)}^{\:\!\! 1/p}\:\!\!{ \bE\:\!\bracks[\big]{ {\abs{\:\! X \:\!}}^p } }^{\:\! 1/p} \;\!\! .
\]
Since $p > 1$, the right-hand side is integrable on $\mo{]} \bar{\alpha} \:\!,\:\!\! 1 \;\!\!\mc{[}$, which yields~\eqref{eq:lebesgue-claim*} and completes the argument.

\paragraph{\emph{Truncated Lebesgue measures.}} \hspace{-2ex}
Let $\alpha , \beta \in \mo{[} 0 \:\!,\:\!\! 1 \mc{]}$ with $\alpha < \beta$ and assume that $\bm{\nu}$ is the rescaled Lebesgue measure on $\mo{[} \alpha \:\!, \beta \mc{]}$. As before, in this setting one can write a layer-cake formula of the form
\[
\frac{1}{\beta - \alpha} \;\! \int_{\alpha}^{\beta} \! e_X(t) \, dt
\:\! = \:\!
- \:\! \frac{1}{\beta - \alpha} \:\!\! \int_{-\:\!{e_X(\alpha)}_{-}}^{-\:\!{e_X(\beta)}_{-}} \!\! \big{(}\:\! e_X^{-1}(x) - \alpha \:\!\big{)} \;\! dx
\:\! + \:\!
\frac{1}{\beta - \alpha} \;\!\! \int_{{e_X(\alpha)}_+}^{{e_X(\beta)}_+} \:\!\!\! \big{(}\;\! \beta - e_X^{-1}(x) \big{)} \;\! dx \;\! ,
\]
where the conventions $e_X(0) = -\infty$, $e_X(1) = \infty$ are adopted. In general, this construction induces a risk measure on $\Lped{p \;\!\!}{\bP}{}$ for every $p > 1$. If $\alpha \neq 0$ and $\beta \neq 1$, then it extends to a risk measure on $\Lped{1 \;\!\!}{\bP}{}$. 
Moreover, if $\alpha,\beta \le 1/2$, the associated risk measure $R^{\:\! e}_{\bm{\nu}}$ is coherent.

\bigskip


Further computational aspects, including empirical implementations based on inverse expectile representations, are developed in Appendix~\ref{sec:appendix_empirical-implementation}.


\subsection{Generalized deviation measures}
\label{subsec:deviation-measures}

Following the framework developed in~\cite{rockafellar-uryasev-zabarankin_2006}, a \emph{generalized deviation measure} (on $\Lped{1 \;\!\!}{\bP}{}$) is a functional $\cD \colon \Lped{1 \;\!\!}{\bP}{} \to \R$ that fulfills the three axioms below.
\begin{description} 

\item[\textbf{($\cD$\:\!-\:\!inv)}] \hypertarget{item:Dinv}{\label{item:Dinv} --- \emph{Scale invariance}.
For any $X \in \Lped{1 \;\!\!}{\bP}{}$ and $c \in \R$,
\begin{equation*} 
\label{eq:Dinv}
\cD(X + c) = \cD(X) \:\! .
\end{equation*}
} 

\item[\textbf{($\cD$\:\!-\:\!hom)}] \hypertarget{item:Dhom}{\label{item:Dhom} --- \emph{Positive homogeneity}.
For any $X \in \Lped{1 \;\!\!}{\bP}{}$ and $t \in \mo{[} 0 \:\!,\:\!\! \infty \mc{[}$,
\begin{equation*} 
\label{eq:Dhom}
\cD(t X) = t \:\! \cD(X) \:\! .
\end{equation*}
} 

\item[\textbf{($\cD$\:\!-\:\!pos)}] \hypertarget{item:Dpos}{\label{item:Dpos} --- \emph{Nonnegativity}.
For any $X \in \Lped{1 \;\!\!}{\bP}{}$,
\begin{equation*} 
\label{eq:Dpos}
\cD(X) \ge 0 \;\! .
\end{equation*}
} 

\end{description}

A generalized deviation measure $\cD(\;\! \bm{\cdot} \;\!)$ is \emph{subadditive} if:
\begin{description}
\item[\textbf{($\cD$\:\!-\:\!sub)}] \hypertarget{item:Dsub}{\label{item:Dsub} --- \emph{Subadditivity}.
For any $X , Y \in \Lped{1 \;\!\!}{\bP}{}$,
\begin{equation*} 
\label{eq:Dsub}
\cD(X + Y) \le \cD(X) + \cD(Y) \:\! .
\end{equation*}
} 
\end{description}

For any nontrivial measure $\bm{\mu}$ on $\cB \big{(} \mo{[} 0 \:\!,\:\!\! 1 \mc{]} \big{)}$, we define the \emph{$\bm{\mu}$-integrated expectile deviation} on $\Lped{1 \;\!\!}{\bP}{}$ by
\begin{equation}
\label{eq:integrated-expectile-deviation} 
D^{\:\! e}_{\:\!\! \bm{\mu}}(X)
\eqdef \;\!\!
\int_{0}^{1} \:\! \abs[\big]{\;\! e_X(\alpha) - e_X(1/2\:\!) \:\!} \, d\bm{\mu}(\alpha) \:\! , \quad X \in \Lped{1 \;\!\!}{\bP}{} \:\! 
\end{equation}
(see~\eqref{eq:mean}). Henceforth, we focus on measures $\bm{\mu}$ for which $D^{\:\! e}_{\:\!\! \bm{\mu}}$ is finite on the whole space $\Lped{1 \;\!\!}{\bP}{}$.
Under this assumption,~\eqref{eq:integrated-expectile-deviation} is a (law-invariant) generalized deviation measure.

The issue of subadditivity of $D^{\:\! e}_{\:\!\! \bm{\mu}}$ is more delicate.
While it holds on $\Lped{2}{\bP}{}$ by the results of~\cite{rockafellar-uryasev-zabarankin_2006}, its extension to $\Lped{1 \;\!\!}{\bP}{}$ is not automatic and depends on $\bm{\mu}$.
In particular, whenever $D^{\:\! e}_{\:\!\! \bm{\mu}}$ is sufficiently regular with respect to the $\Lped{1 \;\!\!}{\bP}{}$\:\!-\:\!topology, subadditivity on $\Lped{1 \;\!\!}{\bP}{}$ can be obtained by continuity, by extending it from $\Lped{2}{\bP}{}$ by density. Accordingly, the possibility of preserving this property depends on the behavior of the measure $\bm{\mu}$ near the endpoints of the unit interval.

Heuristically, $D^{\:\! e}_{\:\!\! \bm{\mu}}$ can be interpreted as a measure of dispersion around the central expectile $e_X(1/2\:\!)$.
For each level $\alpha \in \mo{]} 0 \:\!,\:\!\! 1 \;\!\!\mc{[}$, the quantity $\abs{ e_X(\alpha) - e_X(1/2\:\!) }$ captures the magnitude of an asymmetric deviation from the center, indicating how far the distribution extends in the direction corresponding to $\alpha$.
The measure $\bm{\mu}$ then determines how such deviations across different levels are aggregated, assigning relative importance to various parts of the distribution.
In this sense, $D^{\:\! e}_{\:\!\! \bm{\mu}}$ provides a flexible expectile-based dispersion measure, whose sensitivity to tail behavior and asymmetry is governed by the form of $\bm{\mu}$.

In the special case where $\bm{\mu}$ is symmetric about $1/2$, that is, for any $B \in \cB \big{(} \mo{[} 0 \:\!,\:\!\! 1 \mc{]} \big{)}$,
\[
\bm{\mu}\:\!\bracks{ B \:\!}
=
\bm{\mu}\:\!\bracks[\big]{\;\!\! \Set{\! 1 - x : x \in B \!} \;\!\!} \:\! ,
\]
by exploiting the symmetry of $\bm{\mu}$ together with the monotonicity of the expectile function, the $\bm{\mu}$-integrated expectile deviation~\eqref{eq:integrated-expectile-deviation} is entirely determined by the widths of the expectile regions $E_X(\alpha)$ for $\alpha \in \mo{]} 0 \:\!,\:\!\! 1/2 \mc{]}$ (see~\eqref{eq:expectiles-regions}): for any $X \in \Lped{1 \;\!\!}{\bP}{}$,
\[
D^{\:\! e}_{\:\!\! \bm{\mu}}(X)
= \;\!\!
\int_{0}^{1/2} \:\! \bracks[\big]{ e_X(1 - \alpha) - e_X(\alpha) } \;\! d\bm{\mu}(\alpha)
\equiv \;\!\!
\int_{0}^{1/2} \:\!\! \Leb\:\!\bracks[\big]{ E_X(\alpha) } \;\! d\bm{\mu}(\alpha) \:\! .
\]

Relevant instances of $\bm{\mu}$-integrated expectile deviations arise from structured choices of the underlying measure.
Discrete specifications lead to finite combinations of expectile widths, while absolutely continuous ones allow for natural generalizations of classical dispersion measures.
Among these, the expectile analogue of the mean absolute deviation about the median (MAD) plays a prominent role and is discussed below.
Further variants, obtained by weighting the expectile widths by the level $\alpha$, will also be considered, as they naturally connect with the convex body constructions introduced later on.
Suitable bounds required for this purpose are addressed accordingly.

In close analogy with the mean absolute deviation, a scalar measure of dispersion can be obtained by aggregating the widths of the expectile regions across all levels.
Replacing quantiles with expectiles in this construction yields the convex-order-consistent dispersion functional
\begin{equation}
\label{eq:expectile-dispersion} 
\Delta^{\;\!\! e}_{\:\! \bm{\mu}}(X)
\eqdef \;\!\!
\int_{0}^{1/2} \:\!\! \Leb\:\!\bracks[\big]{ E_X(\alpha) } \;\! d\bm{\mu}(\alpha) \:\! , \quad X \in \Lped{1 \;\!\!}{\bP}{} \:\! .
\end{equation}
Beyond its intrinsic interpretation as a measure of variability, this quantity will serve as the fundamental building block for the expectile-based inequality indices introduced in the subsequent sections.

As a reference case, for $\bm{\mu} = \Leb\;\!\!|_{\cB (\;\!\! \mo{[} 0 ,\:\!\! 1 \mc{]} \;\!\!)}$, the functional $\Delta^{\;\!\! e}_{\:\! \bm{\mu}}$ admits an alternative formulation, in terms of the inverse expectile function, as in Section~\ref{subsec:risk-measures}: for any $X \in \Lped{1 \;\!\!}{\bP}{}$,
\begin{equation}
\label{eq:expectile-dispersion-inverse} 
\Delta^{\;\!\! e}_{\:\! \bm{\mu}}(X)
= \;\!\!
\int_{-\infty}^{\bE\:\!\bracks{ X }} \;\!\!\! e_X^{-1}(x) \, dx
\:\! + \:\!\!
\int_{\:\! \bE\:\!\bracks{ X }}^{\infty} \! \big{(} 1 - e_X^{-1}(x) \big{)} \;\! dx \:\! .
\end{equation}
In particular, the quantity in~\eqref{eq:expectile-dispersion-inverse} can be interpreted as the mean absolute deviation of the distribution induced by $e_X^{-1}$ from the central expectile $e_X(1/2\:\!) = \bE\:\!\bracks{ X }$.
By invoking~\eqref{eq:inverse-expectile}, this representation yields explicit expressions in terms of the tail functionals $\bE\:\!\bracks[\big]{\;\!\! {(X - x)}_{\pm} }$ and of the absolute deviation $\bE\:\!\bracks[\big]{ \abs{\:\! X - x \:\! } }$, $x \in \R$, which will be exploited in the analysis.

More generally, let $\bm{\mu}$ be absolutely continuous with respect to the Lebesgue measure on $\mo{[} 0 \:\!,\:\!\! 1/2 \mc{]}$, with Radon--Nikodym derivative
$\psi \coloneqq \frac{d\bm{\mu}}{d\:\!\!\Leb} \colon \mo{[} 0 \:\!,\:\!\! 1/2 \mc{]} \:\!\! \to \R_{\:\! +}$ (defined $\Leb$-\:\!a.e.).
Then~\eqref{eq:expectile-dispersion} reads
\begin{equation}
\label{eq:inequality-index} 
\Delta^{\;\!\! e}_{\:\! \bm{\mu}}(X)
= \;\!\!
\int_{0}^{1/2} \:\!\! \psi(\alpha) \;\!\! \Leb\:\!\bracks[\big]{ E_X(\alpha) } \;\! d\alpha \:\! , \quad X \in \Lped{1 \;\!\!}{\bP}{} \:\! .
\end{equation}
The dispersion functional \eqref{eq:inequality-index} immediately inherits the upper bounds established in Corollary~\ref{cor:lenght-expectiles-regions_0} and Proposition~\ref{cor:lower-upper-bounds} for the widths of the expectile regions.

\begin{proposition}
\label{prop:kappa_psi} 
Under the standing assumptions of this subsection, suppose further that
\begin{equation}
\label{eq:kappa_psi} 
\kappa_{\;\!\! \psi}
\coloneqq \;\!\!
\int_{0}^{1/2} \:\!\! \psi(\alpha) \;\! \frac{1 - 2\:\!\alpha}{\alpha} \, d\alpha < \infty \:\! .
\end{equation}
Then, for any non-degenerate $X \in \Lped{1 \;\!\!}{\bP}{}$,
\begin{equation}
\label{eq:kappa_psi-ineq} 
\Delta^{\;\!\! e}_{\:\! \bm{\mu}}(X)
\:\! < \:\!
\kappa_{\;\!\! \psi} \bE\:\!\bracks[\Big]{\:\! \abs[\big]{\:\! X \:\!\! - \bE\:\!\bracks{ X } \:\!} \:\!} .
\end{equation}
Moreover, if
\begin{equation}
\label{eq:kappa_psi^+} 
\kappa_{\;\!\! \psi}^{+}
\coloneqq \;\!\!
\int_{0}^{1/2} \:\!\! \psi(\alpha) \;\! \frac{1 - 2\:\!\alpha}{\alpha\:\!(1 - \alpha)} \, d\alpha < \infty \:\! ,
\end{equation}
then, for any $X \in \Lped{1}{\bP}{+} \setminus \Set{\! 0 \!}$,
\begin{equation}
\label{eq:kappa_psi^+-ineq} 
\Delta^{\;\!\! e}_{\:\! \bm{\mu}}(X)
\:\! < \:\!
\kappa_{\;\!\! \psi}^{+} \bE\:\!\bracks{ X } \:\! .
\end{equation}
\end{proposition}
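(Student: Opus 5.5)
Both inequalities follow by integrating the pointwise strict width bounds already established against the density $\psi$. The only delicate issue is how strictness survives the integration.

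For \eqref{eq:kappa_psi-ineq}, fix a non-degenerate $X \in \Lped{1 \;\!\!}{\bP}{}$. By Corollary~\ref{cor:lenght-expectiles-regions_0}, for every $\alpha \in \mo{]} 0 \:\!,\:\!\! 1/2 \mc{[}$,
\[
\Leb\:\!\bracks[\big]{ E_X(\alpha) } < \frac{1 - 2\:\!\alpha}{\alpha} \:\! \bE\:\!\bracks[\Big]{\:\! \abs[\big]{\:\! X \:\!\! - \bE\:\!\bracks{ X } \:\!} \:\!} .
\]
At $\alpha = 1/2$ both sides vanish, and this single point is $\Leb$-null anyway. I multiply by $\psi(\alpha) \ge 0$ and integrate over $\mo{[} 0 \:\!,\:\!\! 1/2 \mc{]}$ using \eqref{eq:inequality-index}. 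This gives $\Delta^{\;\!\! e}_{\:\! \bm{\mu}}(X) \le \kappa_{\;\!\! \psi} \bE\:\!\bracks[\big]{ \abs{ X - \bE\:\!\bracks{ X } } }$. The right-hand side is finite by \eqref{eq:kappa_psi}, so the left-hand side is finite as well. Measurability is not an issue, since $\alpha \mapsto \Leb\:\!\bracks{ E_X(\alpha) }$ is a difference of monotone functions.

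The step needing care is upgrading $\le$ to $<$. Set $g(\alpha) \coloneqq \frac{1 - 2\:\!\alpha}{\alpha} \bE\:\!\bracks[\big]{ \abs{ X - \bE\:\!\bracks{ X } } } - \Leb\:\!\bracks{ E_X(\alpha) }$, which is strictly positive on $\mo{]} 0 \:\!,\:\!\! 1/2 \mc{[}$. Then $\int_0^{1/2} \psi\, g \, d\alpha > 0$ exactly when $\psi$ is not $\Leb$-a.e.\ zero on $\mo{]} 0 \:\!,\:\!\! 1/2 \mc{[}$, that is, exactly when $\bm{\mu}$ charges $\mo{]} 0 \:\!,\:\!\! 1/2 \mc{[}$. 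I will invoke the standing assumption that $\bm{\mu}$ is nontrivial: since $\bm{\mu}$ is absolutely continuous, $\bm{\mu}\:\!\bracks{ \mo{]} 0 \:\!,\:\!\! 1/2 \mc{[} } = \bm{\mu}\:\!\bracks{ \mo{[} 0 \:\!,\:\!\! 1/2 \mc{]} } > 0$. Hence $\psi g$ is nonnegative and strictly positive on a set of positive Lebesgue measure, so its integral is strictly positive. This gives the strict inequality, and all quantities involved are finite, so no $\infty - \infty$ issue arises.

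For \eqref{eq:kappa_psi^+-ineq} the argument is identical. I replace the pointwise bound with Proposition~\ref{cor:lower-upper-bounds}, which for $X \in \Lped{1}{\bP}{+} \setminus \Set{\! 0 \!}$ and $\alpha \in \mo{]} 0 \:\!,\:\!\! 1/2 \mc{[}$ gives $\Leb\:\!\bracks{ E_X(\alpha) } < \frac{1 - 2\:\!\alpha}{\alpha\:\!(1 - \alpha)} \bE\:\!\bracks{ X }$. I then integrate against $\psi$, using finiteness of $\kappa_{\;\!\! \psi}^{+}$ from \eqref{eq:kappa_psi^+}. Strictness follows by the same positive-measure argument.

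This case does not require non-degeneracy. If $X$ is a positive constant $c$, the width is $0$ while the bound equals $\frac{1 - 2\:\!\alpha}{\alpha\:\!(1 - \alpha)}\,c > 0$ for $\alpha < 1/2$. So the pointwise strict inequality holds there too, and is exactly what Proposition~\ref{cor:lower-upper-bounds} provides.
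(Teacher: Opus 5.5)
Your proposal is correct and follows the paper's route. The paper only remarks that $\Delta^{e}_{\bm{\mu}}$ inherits the pointwise width bounds of Corollary~\ref{cor:lenght-expectiles-regions_0} and Proposition~\ref{cor:lower-upper-bounds} once they are integrated against $\psi$; your explicit strictness argument, using that $\bm{\mu}$ is nontrivial (so $\psi$ is not $\Leb$-a.e.\ zero on $\mo{]} 0 , 1/2 \mc{[}$), fills in a step the paper leaves implicit, and that step is genuinely needed, since for $\psi = 0$ a.e.\ both sides vanish.
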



\begin{remark}
It is worth observing that
\[
\kappa_{\;\!\! \psi}
\le
\kappa_{\;\!\! \psi}^{+}
\le
2 \:\! \kappa_{\;\!\! \psi} \:\! ,
\]
so that the finiteness requirements
in~\eqref{eq:kappa_psi} and~\eqref{eq:kappa_psi^+}, imposed on $\psi$ through $\kappa_{\;\!\! \psi}$ and $\kappa_{\;\!\! \psi}^{+}$, are in fact equivalent.
They are nevertheless stated explicitly for ease of reference.

On the other hand, these integrability conditions do not, in general, guarantee that $\psi$ is integrable on its own over $\mo{]} 0 \:\!,\:\!\! 1/2 \mc{]}$ (equivalently, that the associated measure $\bm{\mu}$ is finite on $\mo{[} 0 \:\!,\:\!\! 1/2 \mc{]}$).

\end{remark}

Natural examples are provided by power-type densities of the form
\begin{equation}
\label{eq:power-density} 
\alpha \longmapsto \psi(\alpha) \propto \alpha^{\:\! q} \;\!\! , \quad q \in \mo{]} 0 \:\!,\:\!\! \infty \mc{[} \:\! ,
\end{equation}
along with suitable linear combinations thereof.
Among these alternatives, the triangular weighting density
\[
\psi(\alpha) = \alpha
\]
($q = 1$) occupies a distinguished role: it assigns progressively larger weight to less extreme expectile levels, while still retaining sensitivity to asymmetry across the whole range $\mo{[} 0 \:\!,\:\!\! 1/2 \mc{]}$.

Moreover, it admits particularly transparent geometric interpretations and will serve as the canonical weighting scheme in the sequel, leading to an expectile-based measure for which inequality can be interpreted directly in terms of geometric size.

The triangular weighting scheme also admits explicit bounds relating the associated geometric functional to classical measures of variability.
Besides providing quantitative control, these estimates anticipate the multivariate bounds established later in the paper.

\begin{corollary}
\label{cor:power-density} 
Let $X \in \Lped{1 \;\!\!}{\bP}{}$ non-degenerate. Then
\begin{equation}
\label{eq:area-expectile-starshaped-set_bound} 
\int_{0}^{1/2} \:\!\! \alpha \:\! \bracks[\big]{ e_X(1 - \alpha) - e_X(\alpha) } \;\! d\alpha
\:\! < \:\!
\frac{1}{4} \;\!\! \bE\:\!\bracks[\Big]{\:\! \abs[\big]{\:\! X \:\!\! - \bE\:\!\bracks{ X } \:\!} \:\!} \:\! .
\end{equation}
Furthermore, if $X \in \Lped{1}{\bP}{+} \setminus \Set{\! 0 \!}$, then
\begin{equation}
\label{eq:area-expectile-starshaped-set_nor-bound} 
\int_{0}^{1/2} \:\!\! \alpha \:\! \bracks[\big]{ e_X(1 - \alpha) - e_X(\alpha) } \;\! d\alpha
\:\! < \:\!
( 1 - \ln 2 \;\!)\;\!\!\bE\:\!\bracks{ X }
\approx
0.307 \cdot \bE\:\!\bracks{ X } \:\! . 
\end{equation}
\end{corollary}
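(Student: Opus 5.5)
This is Proposition~\ref{prop:kappa_psi} specialised to the triangular density $\psi(\alpha)=\alpha$ on $\mo{[} 0 \:\!,\:\!\! 1/2 \mc{]}$. The plan is therefore to compute the two constants $\kappa_\psi$ and $\kappa_\psi^+$ for this choice, check that they are finite, and read off the two bounds.

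First, since $\psi(\alpha)=\alpha$, the left-hand side of both displayed inequalities is exactly $\Delta^{\;\!\! e}_{\:\! \bm{\mu}}(X)$ in the form~\eqref{eq:inequality-index}, because $\Leb\:\!\bracks{ E_X(\alpha) } = e_X(1-\alpha)-e_X(\alpha)$. Next I compute
\[
\kappa_\psi=\int_0^{1/2}\alpha\,\frac{1-2\alpha}{\alpha}\,d\alpha=\int_0^{1/2}(1-2\alpha)\,d\alpha=\frac12-\frac14=\frac14 .
\]
This is finite, so~\eqref{eq:kappa_psi-ineq} gives~\eqref{eq:area-expectile-starshaped-set_bound} for every non-degenerate $X\in\Lped{1}{\bP}{}$.

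For the nonnegative case,
\[
\kappa_\psi^+=\int_0^{1/2}\frac{1-2\alpha}{1-\alpha}\,d\alpha .
\]
Writing $\frac{1-2\alpha}{1-\alpha}=2-\frac{1}{1-\alpha}$, this equals $1-\bracks{-\ln(1-\alpha)}_0^{1/2}=1-\ln 2$. Then~\eqref{eq:kappa_psi^+-ineq} yields~\eqref{eq:area-expectile-starshaped-set_nor-bound} for every $X\in\Lped{1}{\bP}{+}\setminus\Set{\! 0 \!}$.

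The only point needing care is strictness, and that is inherited from Proposition~\ref{prop:kappa_psi}. If one wants to make it explicit, it comes from Corollary~\ref{cor:lenght-expectiles-regions_0} and Proposition~\ref{cor:lower-upper-bounds}. Each gives a strict pointwise inequality for every $\alpha\in\mo{]} 0 \:\!,\:\!\! 1/2 \mc{[}$. Integrating this against the density $\alpha$, which is strictly positive on a set of positive measure, preserves strictness, provided both sides are finite. Finiteness holds here since $\kappa_\psi$ and $\kappa_\psi^+$ are finite and $X\in\Lped{1}{\bP}{}$.

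One subtlety is that the nonnegative case does not assume non-degeneracy. However, $X$ constant and nonzero gives a left-hand side of $0<(1-\ln2)\bE\:\!\bracks{X}$, so the strict bound still holds in that case.
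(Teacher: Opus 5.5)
Your proposal is correct and follows exactly the route the paper intends: the corollary is Proposition~\ref{prop:kappa_psi} specialised to $\psi(\alpha)=\alpha$, with $\kappa_\psi = 1/4$ and $\kappa_\psi^+ = 1-\ln 2$ computed just as you do. Strictness comes from the pointwise strict bounds of Corollary~\ref{cor:lenght-expectiles-regions_0} and Proposition~\ref{cor:lower-upper-bounds}, and your remark on the nonnegative (possibly constant) case is accurate.
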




For further insight into these estimates, and in particular into the role played by the choice of the centre, we refer again to Remark~\ref{rem:median-vs-mean}.
The sharpness of the constant appearing in~\eqref{eq:area-expectile-starshaped-set_nor-bound} is discussed in Section~\ref{subsec:two-point-expectile}.

\begin{remark}
Whenever $X \in \Lped{2}{\bP}{}$, an immediate application of~\eqref{eq:kappa_psi-ineq} and the classical Cauchy--Schwarz inequality yields the additional estimate
\[
\Delta^{\;\!\! e}_{\:\! \bm{\mu}}(X)
\le
\kappa_{\;\!\! \psi} \sqrt{\:\! \Var \bracks{ X } } \:\! .
\]
\end{remark}


\section{The expectile star-shaped set and its geometric measure}
\label{sec:expectile-starshaped-set}

The expectile function admits a natural geometric interpretation in the plane.
By embedding expectile levels into a two-dimensional radial structure, one obtains a star-shaped set whose shape encodes the dispersion and asymmetry of the underlying distribution.
This construction provides a bridge between the analytical and geometric viewpoints in expectile-based analysis.

Beyond furnishing a visual representation of expectile asymmetry, the resulting geometric object captures global distributional features that are not immediately apparent from individual expectile levels.
As will become clear below, its geometry provides a natural foundation for the development of expectile-based measures of dispersion and inequality.


\subsection{Definition and interpretation}
\label{subsec:definition-interpretation_starshaped}

For any $X \in \Lped{1 \;\!\!}{\bP}{}$, we introduce the \emph{expectile star-shaped set} of $X$ as
\begin{equation}
\label{eq:expectile-starshaped-set} 
\cE_{\:\! X \;\!\!} \eqdef \:\! \cl \:\! \Set{\:\!\! {\bracks{t , x}}^{\top \:\!\!} \! \in \R^{2 \:\!\!} \:\!\ \Big{|} \:\!\ t \in \mo{]} 0 \:\!,\:\!\! 1/2 \mc{]} \:\! , \ t^{\:\! -1 \:\!} x \in E_X(t) \:\!\!} \:\!\! .
\end{equation}

The set $\cE_{\:\! X \;\!\!}$ provides a geometric representation of the expectile structure of $X$\;\!\!. 
Each level $\alpha \in \mo{]} 0 \:\!,\:\!\! 1/2 \mc{]}$ corresponds to the line segment
\[
\Set{\:\!\! {\bracks{t , x}}^{\top \:\!\!} \! \in \R^{2 \:\!\!} \:\!\ \Big{|} \:\!\ t = \alpha \:\! , \ \alpha^{\:\!\! -1 \:\!} x \in E_X(\alpha) \:\!\!} \;\!\! ,
\]
so that $\cE_{\:\! X \;\!\!}$ collects all such scaled intervals and their closure. 
By construction, $\cE_{\:\! X \;\!\!}$ is star-shaped with respect to the origin, in the sense that, for any ${\bracks{t , x}}^{\top \:\!\!} \! \in \cE_{\:\! X \;\!\!}$ and any scalar $\theta \in \mo{[} 0 \:\!,\:\!\! 1 \mc{]}$, the point $\theta \:\! {\bracks{t , x}}^{\top \:\!\!} \!$ also belongs to $\cE_{\:\! X \;\!\!}$. 
Equivalently, $\cE_{\:\! X \;\!\!}$ is invariant under radial contractions about the origin.

Geometrically, the boundary of $\cE_{\:\! X \;\!\!}$ is traced by the curve 
\[
\alpha \longmapsto {\bracks{ \alpha \:\! , \alpha \:\! e_X(\alpha) }}^{\top \:\!\!} \:\!\! , \quad \alpha \in \mo{]} 0 \:\!,\:\!\! 1/2 \mc{]} \:\! ,
\]
together with its counterpart
\[
\alpha \longmapsto {\bracks{ \alpha \:\! , \alpha \:\! e_X(1 - \alpha) }}^{\top \:\!\!} \:\!\! , \quad \alpha \in \mo{]} 0 \:\!,\:\!\! 1/2 \mc{]} \:\! .
\]
Accordingly, the set $\cE_{\:\! X \;\!\!}$ can be viewed as the planar region enclosed between these two branches, encoding in geometric form the dispersion and asymmetry of $X$ through its expectile function.

Rather than analyzing the expectile regions individually, the construction encapsulates the entire expectile profile of $X$ within a single geometric object.
This viewpoint enables probabilistic features such as dispersion and asymmetry to be studied through geometric descriptors including area, symmetry, curvature, and radial behavior, while promoting a genuinely geometric approach to distributional analysis.
In particular, the geometry of $\cE_{\:\! X \;\!\!}$ reflects distributional features directly: symmetric laws tend to generate balanced shapes, whereas increasing asymmetry manifests itself through progressively unbalanced geometries.

In general, these branches are not symmetric with respect to the horizontal axis. 
This asymmetry is a distinctive feature of expectile geometry and stands in contrast to Lorenz-type constructions, where symmetry is inherent. 
It reflects the directional nature of the expectile weighting and provides a finer decomposition of distributional imbalance between the lower and upper tails.

Furthermore, contrary to the Lorenz curve framework, the expectile set $\cE_{\:\! X \;\!\!}$ is, in general, non-convex, although it remains star-shaped.
This feature is reflected in the geometry of its radial boundary and endows the construction with a richer geometric structure, with direct implications for the interpretation of the associated inequality measure presented in Section~\ref{sec:inequality-index}.
In this respect, a characterization of convexity is provided in Appendix~\ref{sec:appendix_convexity} under mild regularity assumptions.


The geometric characterization of the convex order provided by~\eqref{eq:cx-expectiles-regions} extends naturally to the expectile star-shaped sets.
Indeed, by definition~\eqref{eq:expectile-starshaped-set},
\[
X \le_{\;\! \mathrm{cx}} Y
\quad \Longleftrightarrow \quad
\cE_{\:\! X \;\!\!} \subseteq \cE_{\:\! Y \;\!\!} \:\! .
\]

Thus, the convex order admits a geometric interpretation in terms of set inclusion.
Within this framework, $\cE_{\:\! X \;\!\!} \subseteq \cE_{\:\! Y \;\!\!}$ precisely expresses the fact that $Y$ is more dispersed than $X$ while preserving the same mean.
The family of expectile star-shaped sets therefore furnishes a geometric realization of the convex order, extending the inclusion structure of expectile regions to a global geometric representation.

The area of $\cE_{\:\! X \;\!\!}$ in the plane is obtained by integrating, over $\alpha \in \mo{]} 0 \:\!,\:\!\! 1/2 \mc{]}$, 
the width of the corresponding scaled expectile segment, namely
\begin{equation}
\label{eq:area-expectile-starshaped-set} 
\Leb^{2}\bracks[\big]{ \cE_{\:\! X \;\!\!} }
= \;\!\!
\int_{0}^{1/2} \:\!\! \alpha \;\!\! \Leb\:\!\bracks[\big]{ E_X(\alpha) } \;\! d\alpha
\;\! \equiv
\int_{0}^{1/2} \:\!\! \alpha \:\! \bracks[\big]{ e_X(1 - \alpha) - e_X(\alpha) } \;\! d\alpha \:\! .
\end{equation}
An explicit upper bound for $\Leb^{2}\bracks[\big]{ \cE_{\:\! X \;\!\!} }$ is provided in~\eqref{eq:area-expectile-starshaped-set_bound}.
In this sense, the expectile-based dispersion functional introduced in~\eqref{eq:expectile-dispersion}, defined in terms of the unscaled regions $E_X(\alpha)$, is directly connected to the geometric area of the star-shaped set $\cE_{\:\! X \;\!\!}$ once the scaling along the first coordinate is taken into account.

Finally, the radial function associated with $\cE_{\:\! X \;\!\!}$,
\[
\bm{\rho}_{\:\! \cE_{\:\! X \;\!\!}}(1 , x)
\eqdef
\sup \Set{\:\!\! \theta \in \R_{\:\!+} : \theta \:\! {\bracks{1 , x}}^{\top \:\!\!} \! \in \cE_{\:\! X \;\!\!} \:\!\!} \:\!\! , \quad x \in \R
\]
is well defined and, for any $x \in \R$, satisfies
\begin{equation} 
\label{eq:radial-function}
\bm{\rho}_{\:\! \cE_{\:\! X \;\!\!}}(1 , x)
=
\min \braces[\big]{\:\! e_X^{-1}(x) \:\!,\;\!\! 1 - e_X^{-1}(x) \:\!}
\equiv
\begin{cases}
\:\! e_X^{-1}(x) \:\! , & \text{if $x \le \bE\:\!\bracks{ X }$\:\!,} \\[0.5ex]
\:\! 1 - e_X^{-1}(x) \:\! , & \text{if $x \ge \bE\:\!\bracks{ X }$\:\!.}
\end{cases}
\end{equation}
Moreover, $\bm{\rho}_{\:\! \cE_{\:\! X \;\!\!}}(1 , x)$ takes values in $\mo{[} 0 \:\!,\:\!\! 1/2 \mc{]}$ for all $x \in \R$.

The characterization of the radial function $\bm{\rho}_{\:\! \cE_{\:\! X \;\!\!}}$\;\!\! through the inverse expectile function facilitates both the analysis and the computation of $\Leb^{2}\bracks[\big]{ \cE_{\:\! X \;\!\!} }$, since the structure of $e_X^{-1}$ directly encodes the geometry of $\cE_{\:\! X \;\!\!}$ along its radial directions.
The resulting inverse-expectile representation also underlies the empirical implementation described in Appendix~\ref{sec:appendix_empirical-implementation}.
Beyond its computational role, the area of the expectile star-shaped set captures structural information on the distribution of $X$\;\!\!, and may be interpreted as an expectile-based analogue of the classical Gini index.

The following proposition provides an exact expression for the area of $\cE_{\:\! X \;\!\!}$ based on its radial representation through the inverse expectile function.

\begin{proposition}
\label{prop:area-expectile-starshaped-set_rho} 
Let $X \in \Lped{1 \;\!\!}{\bP}{}$. Then
\begin{equation}
\label{eq:area-expectile-starshaped-set_rho} 
\Leb^{2}\bracks[\big]{ \cE_{\:\! X \;\!\!} }
=
\frac{1}{2} \:\! \int\nolimits_{\:\! \R} \bm{\rho}^{\;\! 2}_{\:\! \cE_{\:\! X \;\!\!}}(1 , x) \, dx
=
\frac{1}{2} \, \braces[\Bigg]{\, \int_{-\infty}^{\bE\:\!\bracks{ X }} \;\!\!\! { \big{(} e_X^{-1}(x) \big{)} }^{2} dx
\:\! + \:\!\! \int_{\:\! \bE\:\!\bracks{ X }}^{\infty} \:\!\!\! { \big{(} 1 - e_X^{-1}(x) \big{)} }^{2} dx \:\!} \:\! .
\end{equation}
\end{proposition}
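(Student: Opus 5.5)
The plan is to compute the area of $\cE_{\:\! X \;\!\!}$ by Fubini in the coordinates $(t,x)$ and then change variables along rays through the origin. By the definition \eqref{eq:expectile-starshaped-set} (up to the closure, which only adds a Lebesgue-null boundary since the two boundary branches are graphs of continuous functions of $t$), the set is
\[
\Set{\:\!\! {\bracks{t , x}}^{\top} : t \in \mo{]} 0 \:\!,\:\!\! 1/2 \mc{]} \:\! , \ t\,e_X(t) \le x \le t\,e_X(1-t) \:\!\!} .
\]
Its area is $\int_0^{1/2} t\,[e_X(1-t)-e_X(t)]\,dt$, which is \eqref{eq:area-expectile-starshaped-set}. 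This identity is taken as the starting point. Both sides of \eqref{eq:area-expectile-starshaped-set_rho} are nonnegative integrals, so Tonelli applies throughout and infinite values are allowed.

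\emph{Polar-type change of variables.} Parametrize the open right half-plane by $(t,x) = \theta\,(1,s)$ with $\theta>0$, $s\in\R$. The Jacobian of $(\theta,s)\mapsto(\theta,\theta s)$ equals $\theta$. Since $\cE_{\:\! X \;\!\!}$ is star-shaped with respect to the origin, with radial function $\bm{\rho}_{\:\! \cE_{\:\! X \;\!\!}}(1,s)$ in direction $(1,s)$, each ray meets the set exactly in $\{0<\theta\le \bm{\rho}_{\:\! \cE_{\:\! X \;\!\!}}(1,s)\}$, up to a null set. Hence
\[
\Leb^2\bracks[\big]{\cE_{\:\! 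X \;\!\!}} = \int_\R \int_0^{\bm{\rho}(1,s)} \theta\,d\theta\,ds = \frac12\int_\R \bm{\rho}^{\;\! 2}_{\:\! \cE_{\:\! X \;\!\!}}(1,s)\,ds .
\]
The second equality in \eqref{eq:area-expectile-starshaped-set_rho} then follows by inserting \eqref{eq:radial-function} and splitting the integral at $\bE\bracks{X}$.

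\emph{Radial function check.} The step needing care is verifying that the ray $\{\theta(1,s)\}$ meets the set exactly in $\theta\le\min\{e_X^{-1}(s),1-e_X^{-1}(s)\}$. A point $\theta(1,s)$ with $\theta\le 1/2$ lies in the set iff $s\in E_X(\theta)=[e_X(\theta),e_X(1-\theta)]$. Since $e_X$ is a strictly increasing homeomorphism with inverse \eqref{eq:inverse-expectile}, this is equivalent to $e_X^{-1}(s)\ge\theta$ and $e_X^{-1}(s)\le 1-\theta$. The intervals $E_X(\theta)$ are nested and shrink as $\theta\uparrow 1/2$, which confirms both star-shapedness and the formula. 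Edge cases where $s$ lies outside $\mo{]}\essinf X,\esssup X\mc{[}$ give radius $0$ and contribute nothing.

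\emph{Alternative route.} As a cross-check, one can avoid polar coordinates. Write $t\,[e_X(1-t)-e_X(t)] = \int_\R t\,\1\{e_X(t)\le s\le e_X(1-t)\}\,ds$ and apply Tonelli. For fixed $s$ the inner $t$-integral runs over $0<t\le \bm{\rho}(1,s)$ and equals $\bm{\rho}^2/2$. This is the layer-cake argument used earlier for \eqref{eq:expectile-dispersion-inverse}, with weight $t$ in place of $1$. It makes the identity rigorous for all $X\in\Lped{1}{\bP}{}$, including when both sides are infinite, and it is probably the cleanest route to write up.

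The main obstacle is purely measure-theoretic: justifying that the closure in \eqref{eq:expectile-starshaped-set} and the degenerate endpoint directions do not change the area. Continuity of $e_X$ on $\mo{]}0,1\mc{[}$ shows the boundary is null.
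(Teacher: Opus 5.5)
Your proposal is correct and is essentially the paper's argument. Your ``alternative route'' is exactly the proof given in the paper: write $\alpha\,[e_X(1-\alpha)-e_X(\alpha)]$ as $\int_\R \alpha\,\1\{e_X(\alpha)\le x\le e_X(1-\alpha)\}\,dx$, swap the integrals by Tonelli so that the inner $\alpha$-integral runs over $]0,\bm{\rho}_{\cE_X}(1,x)]$ and equals $\bm{\rho}^2_{\cE_X}(1,x)/2$, and then insert \eqref{eq:radial-function}. Your polar substitution $(\theta,s)\mapsto(\theta,\theta s)$, with Jacobian $\theta$, is the same computation carried out directly on the planar set, because that scaling is precisely what produces the weight $\alpha$ in \eqref{eq:area-expectile-starshaped-set}.
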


\begin{proof}
By the Fubini\:\!--Tonelli theorem, one obtains
\[
\begin{split}
\int_{0}^{1/2} \:\!\! \alpha \:\! \bracks[\big]{ e_X(1 - \alpha) - e_X(\alpha) } \;\! d\alpha &= \int_{0}^{1/2} \:\!\! \alpha \;\! \bracks[\Bigg]{ \:\! \int\nolimits_{\:\! \R} \1_{\mo{[} e_X(\alpha) , e_X(1 - \alpha) \mc{]}} (x) \, dx } \:\! d\alpha \\
&= \int\nolimits_{\:\! \R} \:\!\ \bracks[\Bigg]{\, \int_{0}^{1/2} \alpha \:\! \1_{\mo{[} e_X(\alpha) , e_X(1 - \alpha) \mc{]}} (x) \, d\alpha \:\! } \:\! dx \\
&= \int\nolimits_{\:\! \R} \:\!\ \bracks[\Bigg]{\:\!\! \int_{0}^{\bm{\rho}_{\:\! \cE_{\:\! X \;\!\!}}(1 , x)} \!\!\! \alpha \, d\alpha \:\!} \:\! dx \\[1.25ex]
&= \frac{1}{2} \:\! \int\nolimits_{\:\! \R} \bm{\rho}^{\;\! 2}_{\:\! \cE_{\:\! X \;\!\!}}(1 , x) \, dx \:\! .
\end{split}
\]
This identity, combined with~\eqref{eq:area-expectile-starshaped-set} and the characterization of the radial function in~\eqref{eq:radial-function},
establishes~\eqref{eq:area-expectile-starshaped-set_rho}. \qedhere
\end{proof}

\begin{remark}
\label{rem:power-density} 
The preceding representation reveals a broader geometric structure behind the power-type weighting schemes introduced in~\eqref{eq:power-density}.
Indeed, for any $X \in \Lped{1 \;\!\!}{\bP}{}$, each choice of the power exponent $q \in \mo{]} 0 \:\!,\:\!\! \infty \mc{[}$ determines a different way of quantifying the geometric size of the expectile star-shaped set $\cE_{\:\! X \;\!\!}$.

More precisely, Proposition~\ref{prop:area-expectile-starshaped-set_rho} admits the following extension:
\begin{equation}
\label{eq:power-density_measure} 
\begin{split}
\int_{0}^{1/2} \:\!\! \alpha^{\:\! q} \:\!\! \Leb\:\!\bracks[\big]{ E_X(\alpha) } \;\! d\alpha &= \frac{1}{q + 1} \:\! \int\nolimits_{\:\! \R} \bm{\rho}^{\;\! q \:\! + 1}_{\:\! \cE_{\:\! X \;\!\!}}(1 , x) \, dx \\[-0.25ex]
&= \frac{1}{q + 1} \;\! \braces[\Bigg]{\, \int_{-\infty}^{\bE\:\!\bracks{ X }} \;\!\!\! { \big{(} e_X^{-1}(x) \big{)} }^{q \:\! + 1} dx
\:\! + \:\!\! \int_{\:\! \bE\:\!\bracks{ X }}^{\infty} \:\!\!\! { \big{(} 1 - e_X^{-1}(x) \big{)} }^{q \:\! + 1} dx \:\!} \:\! .
\end{split}
\end{equation}

Thus, power-type weights give rise to a hierarchy of radial moments associated with $\cE_{\:\! X \;\!\!}$.
The triangular case corresponds to $q = 1$, for which~\eqref{eq:power-density_measure} reduces precisely to~\eqref{eq:area-expectile-starshaped-set_rho}.
For $q \neq 1$, the resulting quantity is no longer an area in the ordinary Lebesgue sense, but rather a generalized geometric size of $\cE_{\:\! X \;\!\!}$ determined by its $(q + 1)$\:\!-\:\!th radial moment.
The triangular weighting scheme thus occupies a distinguished position within the power family.
Finally, the limiting case $q \downarrow 0$ recovers the identity~\eqref{eq:expectile-dispersion-inverse}, thereby connecting this family with the expectile-based functional~\eqref{eq:expectile-dispersion}.

In this sense, the parameter $q > 0$ controls the geometric sensitivity of the corresponding functional: larger values of $q$ place greater emphasis on the most extended portions of the expectile star-shaped set, whereas smaller positive values assign relatively more importance to its thinner peripheral regions.
\end{remark}


An intriguing geometric analogy emerges between the boundary curves of $\cE_{\:\! X \;\!\!}$ and Lorenz-type representations of inequality. 
In particular, the lower convex envelope of the mapping 
\[
\alpha \longmapsto (\alpha/2) \;\! e_X(\alpha/2)
\]
may be related to the generalized Lorenz curve of a suitable distribution, whereas the upper concave envelope of 
\[
\alpha \longmapsto (\alpha/2) \;\! e_X(1 - \alpha/2 \:\!)
\]
could be associated with the dual of the generalized Lorenz curve of another distribution. 
This observation suggests that the geometry of $\cE_{\:\! X \;\!\!}$ implicitly encodes a collection of Lorenz-like profiles induced by expectiles, potentially linking the expectile-based area to alternative representations of inequality. 
A systematic investigation of such associated distributions therefore appears promising and may reveal deeper structural connections between expectile geometry and classical measures of economic disparity.

We further refer to Appendix~\ref{sec:appendix_convexity} for a detailed convexity analysis of expectile-based mappings and their relation to distributional asymmetry.



\section{Expectile-based inequality indices}
\label{sec:inequality-index}

From a geometric perspective, the measure of dispersion encoded by expectiles admits a faithful representation through the planar regions introduced in the previous section.
The corresponding area, considered in~\eqref{eq:area-expectile-starshaped-set} and made more explicit in Proposition~\ref{prop:area-expectile-starshaped-set_rho}, condenses this information into a single scalar quantity, capturing the global scale of the expectile spread.
Yet, in its unweighted formulation, this descriptor treats all levels of asymmetry uniformly, without distinguishing their relative position within the distribution.
While this uniformity already yields a meaningful scale of inequality once suitably standardized, it may prove too restrictive whenever one seeks to privilege selected regions, most notably the tails of the distribution, where inequality typically manifests with greater intensity; see also~\cite{bonaccolto.topfer-bonaccolto_2023}.

It is therefore natural to enrich the construction by means of suitable weighting schemes, allowing the contribution of distinct asymmetry regimes to be modulated in a controlled and transparent manner.
This step leads, in a direct and systematic fashion, to the family of expectile-based functionals developed below, in the spirit of the classical approach to inequality measurement as formulated in~\cite{atkinson_1970, shorrocks_1980} and its more recent extensions such as~\cite{inoua_2021, dong-tille-giorgi-guandalini_2024, el.methni-girard-laveur_2025}.


\subsection{Axiomatic framework}
\label{subsec:axiomatic-framework_index}

A \emph{generalized inequality index} associated with $\bP$ and $p \in \mo{[}\;\!\! 1 ,\:\!\! \infty \mc{[}$ is a functional $\cI \colon \Lped{p}{\bP}{+} \to \R$ that satisfies the following three conditions.
\begin{description} 

\item[\textbf{($\cI$\;\!\!-\;\!nor)}] \hypertarget{item:Inor}{\label{item:Inor} --- \emph{Normalization}.
For any $X \in \Lped{p}{\bP}{+}$,
\begin{equation*} 
\cI(X) \in \mo{[} 0 \:\!,\:\!\! 1 \mc{]} \:\! .
\end{equation*}
} 

\item[\textbf{($\cI$\;\!\!-\;\!inv)}] \hypertarget{item:Iinv}{\label{item:Iinv} --- \emph{Scale invariance}.
For any $X \in \Lped{p}{\bP}{+}$ and $t \in \mo{]} 0 \:\!,\:\!\! \infty \mc{[}$,
\begin{equation*} 
\cI(t X) = \cI(X) \:\! .
\end{equation*}
} 


\item[\textbf{($\cI$\;\!\!-\;\!con)}] \hypertarget{item:Icon}{\label{item:Icon} --- \emph{Convex-order consistency}.
For any $X , Y \in \Lped{p}{\bP}{+}$,
\begin{equation*} 
X \le_{\;\!\text{cx}} \;\!\! Y \;\!\!
\quad \Longrightarrow \quad 
\cI(X) \le \cI(Y) \:\! .
\end{equation*}
} 

\end{description}

Normalization fixes the range of the inequality index and endows it with an interpretable scale, with the value $0$ corresponding to a state of perfect equality and the value $1$ to maximal inequality.
Scale invariance (or \emph{income scale independence}) expresses the requirement that the assessment of inequality remain unaffected by uniform rescalings of the underlying quantity.

The role traditionally assigned to the \emph{transfer} or \emph{Pigou–Dalton principle} (PDP), particularly in empirical contexts, is here assumed by consistency with respect to the convex stochastic order.
The Pigou–Dalton condition prescribes a reduction in inequality following a progressive redistribution that preserves the aggregate level of the underlying quantity.
When working with empirical distributions, this requirement is well known to be equivalent to monotonicity with respect to majorization, which in turn coincides with the convex order of the associated empirical distributions.
From this perspective, consistency with the convex order may be regarded as the natural population-level counterpart of the PDP.

A generalized inequality index $\cI(\;\! \bm{\cdot} \;\!)$ is \emph{decomposable} if:
\begin{description} 
\item[\textbf{($\cI$\;\!\!-\;\!dec)}] \hypertarget{item:Idec}{\label{item:Idec} --- \emph{Decomposability}.
There exists a mapping $\ccI \colon \mo{]} 0 \:\!,\:\!\! 1 \;\!\!\mc{[} \times \R_{\:\!+}^{\:\! 2} \;\!\! \times \mo{[} 0 \:\!,\:\!\! 1 \mc{]}^2 \;\!\! \to \mo{[} 0 \:\!,\:\!\! 1 \mc{]}$ such that, for any $X , Y \in \Lped{p}{\bP}{+}$ and $\lambda \in \mo{]} 0 \:\!,\:\!\! 1 \;\!\!\mc{[}$, if $Z \equiv Z_{\;\! X,Y\!,\:\!\lambda} \;\!\! \in \Lped{p}{\bP}{+}$ has cumulative distribution function
\[
F_{\:\! Z} = \lambda \:\! F_{\:\! X} + ( 1 - \lambda \:\!) F_{\;\! Y} ,
\]
then
\begin{equation*} 
\cI(Z)
=
\ccI \big{(} \:\! \lambda \:\!,\;\!\! \bE\:\!\bracks{ X } \:\!,\;\!\! \bE\:\!\bracks{ Y } \:\!,\;\!\! \cI(X) \:\!,\;\!\! \cI(Y) \big{)} \:\! .
\end{equation*}
} 

\end{description}

The decomposability or \emph{aggregativity} axiom formalizes the idea that the inequality of a mixture can be assessed from aggregate information on its constituents, without requiring full knowledge of their underlying distributions.
In the binary case considered here, the resulting value depends solely on the mixing weight, the means of the components, and their respective inequality levels.
This requirement neither presupposes nor entails law invariance, nor does it impose any particular functional form or regularity condition on the aggregation mechanism. 

\begin{remark}
The axioms introduced above do not aim at exhaustively covering all properties commonly required of inequality indices in the literature.
Rather, they are deliberately chosen so as to isolate a minimal yet structurally meaningful framework, suitable for the subsequent developments.

In particular, sample-level conditions such as the \emph{anonymity principle} and the \emph{population principle} are not included explicitly. The anonymity principle requires the value of an inequality index to be invariant under permutations of individual outcomes, whereas the population principle postulates invariance under replications of the population.

Although these principles are classically motivated by empirical approaches to inequality measurement, they do not constitute independent structural requirements when inequality indices are defined and studied at the level of distributions or random variables. For distribution-based indices depending only on the underlying law, both anonymity and population principles are inherently embedded in the framework, since neither permutations nor population replications affect the associated distribution. 
\end{remark}

The geometric perspective developed in the previous sections naturally leads to the study of inequality indices that extend the classical notion of dispersion by incorporating suitable weighting schemes.
Within this framework, we introduce a class of expectile-based inequality functionals that generalize deviation measures through a distribution-sensitive aggregation mechanism (cf. Section~\ref{subsec:deviation-measures}).

More precisely, let $\bm{\mu}$ be a measure on $\mo{[} 0 \:\!,\:\!\! 1/2 \mc{]}$ that is absolutely continuous with respect to the Lebesgue measure, with Radon--Nikodym derivative $\psi = \frac{d\bm{\mu}}{d\:\!\!\Leb} \colon \mo{[} 0 \:\!,\:\!\! 1/2 \mc{]} \:\!\! \to \R_{\:\! +}$, and assume that the integrability condition~\eqref{eq:kappa_psi^+} holds.
Building upon the expectile-dispersion functional~\eqref{eq:expectile-dispersion} and its normalization provided by~\eqref{eq:kappa_psi^+-ineq}, we define the \emph{$\psi$-\:\!integrated expectile inequality index} on $\Lped{p}{\bP}{+}$ by
\begin{equation}
\label{eq:expectile-inequality-index} 
\begin{split}
I^{\:\! e}_{\;\!\! \psi}(X)
\eqdef
\frac{1}{\kappa_{\;\!\! \psi}^{+}} \;\! \Delta^{\;\!\! e}_{\:\! \bm{\mu}} \big{(}\:\! \widetilde{X} \:\!\big{)}
&=
\frac{1}{\kappa_{\;\!\! \psi}^{+}} \:\! \int_{0}^{1/2} \:\!\! \Leb\:\!\bracks[\big]{ E_{\widetilde{X}\:\!}(\alpha) } \;\! d\bm{\mu}(\alpha) \\[0.25ex]
&\equiv
\frac{1}{\kappa_{\;\!\! \psi}^{+}} \:\! \int_{0}^{1/2} \:\!\! \psi(\alpha) \;\!\! \Leb\:\!\bracks[\big]{ E_{\widetilde{X}\:\!}(\alpha) } \;\! d\alpha \:\! , \quad X \in \Lped{p}{\bP}{+} \setminus \Set{\! 0 \!} \:\!\! ,
\end{split}
\end{equation}
where
\begin{equation}
\label{eq:X-std} 
\widetilde{X}
\coloneqq
\frac{X}{\bE\:\!\bracks{ X }} \:\! ,
\end{equation}
the definition being naturally extended to the degenerate case $X \equiv 0$ by setting
\[
I^{\:\! e}_{\;\!\! \psi}(0) \coloneqq 0 \:\! .
\]
Thus, $I^{\:\! e}_{\;\!\! \psi}$ is a law-invariant generalized inequality index, whose convex-order consistency follows from~\eqref{eq:cx-expectiles-regions}.

\begin{remark}
Unlike classical rank-based inequality indices, such as the Gini index and its generalizations, the expectile-based indices introduced here incorporate information on the magnitude of deviations, rather than relying solely on their relative ordering. Rank-based measures are determined entirely by the ordering structure of the distribution and remain invariant under monotone increasing transformations preserving ranks. In contrast, the indices proposed in this paper reflect the metric scale of deviations through the underlying expectile regions and their associated geometric structure.

As a consequence, distributions sharing similar Lorenz curves or rank profiles may nonetheless be distinguished by these indices whenever they exhibit different tail behavior, asymmetry, or localized dispersion. In more homogeneous or symmetric settings, expectile-based indices remain closely aligned with classical measures, while providing a complementary perspective that captures genuinely geometric and magnitude-sensitive aspects of inequality.
\end{remark}

The function $\psi$ acts in~\eqref{eq:expectile-inequality-index} as a weighting scheme that modulates the contribution of the standardized expectile regions $E_{\widetilde{X}\:\!}(\alpha)$ across levels $\alpha$, thereby emphasizing specific portions of the distribution.
The choice of $\psi$ plays a central role in the construction of the index, as it governs the sensitivity of $I^{\:\! e}_{\;\!\! \psi}$ to tail behavior and hence determines how inequality is quantified in expectile-based terms.

As a canonical illustrative instance, consider the triangular weight
\[
\psi(\alpha) = \alpha \:\! , \quad \alpha \in \mo{[} 0 \:\!,\:\!\! 1/2 \mc{]} \:\! .
\]
Under this choice, the resulting functional~\eqref{eq:expectile-inequality-index} progressively emphasizes upper-tail deviations and distributional asymmetry around the mean, moving beyond a purely dispersion-based assessment.

Perhaps more importantly, it admits a direct geometric interpretation in terms of size.
Specifically, the resulting quantity is proportional to the area of the normalized expectile star-shaped set:
\begin{equation}
\label{eq:I_1} 
I^{\:\! e}_{\;\!\! 1}(X)
\coloneqq
\frac{1}{1 - \ln 2} \Leb^{2}\bracks[\big]{ \cE_{\:\! {\widetilde{X}\:\!} \;\!\!} } \;\!\!
\equiv
\frac{1}{1 - \ln 2} \;\! \int_{0}^{1/2} \:\!\! \alpha \:\! \bracks[\big]{ e_{\widetilde{X}\:\!}(1 - \alpha) - e_{\widetilde{X}\:\!}(\alpha) } \;\! d\alpha \:\! , \quad X \in \Lped{p}{\bP}{+} \setminus \Set{\! 0 \!} 
\end{equation}
(see~\eqref{eq:area-expectile-starshaped-set_bound} and cf. Section~\ref{sec:expectile-starshaped-set}).
For nonnegative random variables with unit mean, $I^{\:\! e}_{\;\!\! 1}$ acquires a purely geometric interpretation as a measure of inequality.

Rather than assessing inequality exclusively through ranks, the proposed construction evaluates it through the geometric size of an expectile-generated set, furnishing an expectile-based counterpart to classical Gini-type measures that remains sensitive to the magnitude of distributional disparities.

Alternative weighting profiles allow one to construct heterogeneity functionals that emphasize selected ranges of asymmetry levels, capturing structural features of distributions that are not reducible to dispersion or inequality alone.
From this perspective, varying $\psi$ generates a unified family $\{ I^{\:\! e}_{\;\!\! \psi} \}$ of expectile-based indices, providing a common geometric language for dispersion, inequality, and radial heterogeneity.

These indices rely on global features of the expectile regions associated with the underlying distribution.
In particular, quantities such as $\Leb^{2}\bracks[\big]{ \cE_{\:\! {\widetilde{X}\:\!} \;\!\!} }$ encode nonlocal information on the joint behavior of upper and lower expectiles across the entire range of levels $\alpha$.

As a consequence, the inequality of a mixture cannot, in general, be reconstructed from the corresponding indices of its components through a simple aggregation rule.
Indeed, decomposability is, by design, well adapted to indices that capture dispersion in an additive or averaged sense, but it becomes inherently problematic as soon as one seeks to quantify genuinely shape-driven aspects of heterogeneity.

\begin{remark}
From this perspective, the lack of decomposability should be understood as a structural feature of shape-sensitive indices, rather than as a limitation of the construction.

Only in highly structured or degenerate situations can decomposability be recovered, typically at the price of losing part of the geometric information carried by the expectile regions.
Weaker notions of decomposability, allowing for partial, approximate, or conditional aggregation, may nevertheless be compatible with expectile-based constructions; a thorough examination of these possibilities, however, lies beyond the scope of the present work.

This reflects a fundamental trade-off between geometric sensitivity and aggregative simplicity: the richer the geometric content captured by an inequality index, the less compatible it becomes with strong decomposability requirements.
\end{remark}

A particularly natural subfamily of expectile-based inequality indices is obtained from the power-type weighting profiles introduced in Section~\ref{subsec:deviation-measures}, namely~\eqref{eq:power-density}.
This construction gives rise to a one-parameter collection of indices, parametrized by $q > 0$ and encompassing~\eqref{eq:I_1} as the canonical case $q = 1$:
\begin{equation*}
\begin{split}
I^{\:\! e}_{\;\!\! q}(X)
&\coloneqq
\frac{2^{\:\! q} \;\!\!}{
\frac{2}{q}
-
\Phi \:\!\! \left( \frac{1}{2} , \:\!\! 1 , \:\!\! q \right)
} \;\!
\int_{0}^{1/2} \:\!\! \alpha^{\:\! q} \:\!\! \Leb\:\!\bracks[\big]{ E_{\widetilde{X}\:\!}(\alpha) } \;\! d\alpha \\[0.25ex]
&\equiv
\frac{2^{\:\! q} \;\!\!}{
(q + 1) \bracks*{
\frac{2}{q}
-
\Phi \:\!\! \left( \frac{1}{2} , \:\!\! 1 , \:\!\! q \right)
\;\!\! }
} \:\!
\int\nolimits_{\:\! \R} \bm{\rho}^{\;\! q \:\! + 1}_{\:\! \cE_{\:\! {\widetilde{X}\:\!} \;\!\!}}(1 , x) \, dx \:\! , \quad X \in \Lped{p}{\bP}{+} \setminus \Set{\! 0 \!} \:\!\! ,
\end{split}
\end{equation*}
where $\Phi$ denotes the classical Lerch transcendent.

This family provides a continuum of expectile-based inequality indices with varying degrees of sensitivity to tail asymmetry and distributional heterogeneity, as reflected also by the geometric representation~\eqref{eq:power-density_measure} in Remark~\ref{rem:power-density}.
The use of parametric families of inequality measures is fairly common in the literature; see, for instance,~\cite{el.methni-girard-laveur_2025} and the references therein.


To gain further insight into the proposed construction, we next consider a setting in which expectiles, expectile regions, and the resulting inequality indices admit explicit representations.
This analysis is carried out in the next subsection for a suitable two-point distribution.



\subsection{Expectile geometry for a two-point distribution}
\label{subsec:two-point-expectile}

The index~\eqref{eq:I_1} lends itself to comparison with distributions exhibiting extreme concentration under minimal structural complexity.
Among these, two-point (Bernoulli-type) distributions play a distinguished role, as they constitute the simplest nondegenerate configurations compatible with a prescribed mean, while simultaneously attaining extremal geometric profiles.

The analysis of the expectile geometry associated with such distributions thus provides a canonical benchmark for the geometric framework developed above.
In this setting, expectiles, star-shaped sets, and their induced areas admit fully explicit representations, allowing one to assess sharpness of bounds, asymmetry effects, and normalization procedures in a transparent manner.
As will be seen, the two-point case furnishes not only an extremal reference for this framework, but also a natural calibration scale for interpreting the magnitude of expectile-based inequality indices.

To make these ideas explicit, consider the random variable
\[
X \coloneqq
\begin{cases}
\:\! 0 \:\! , & \text{with probability } 1 - p , \\
\:\! 1/p , & \text{with probability } p ,
\end{cases}
\qquad p \in \mo{]} 0 \:\!,\:\!\! 1 \;\!\!\mc{[},
\]
which is nonnegative and normalized so as to satisfy $\bE\:\!\bracks{ X } = 1$.
By~\eqref{eq:expectiles_1-ord}, for any $\alpha \in \mo{]} 0 \:\!,\:\!\! 1 \;\!\!\mc{[}$, and noting that $e_X(\alpha) \in \mo{]} 0 \:\!,\:\!\! 1/p \mc{[}$, one readily obtains
\begin{equation} 
\label{eq:two-point-expectile}
e_X(\alpha) = \frac{\alpha}{(1 - p) + \alpha \:\! (2\:\!p - 1)} \;\! , \quad e_X(1 - \alpha) = \frac{1 - \alpha}{p - \alpha \:\! (2\:\!p - 1)} \,\! .
\end{equation}

The egalitarian line is given by the mapping $\alpha \mapsto \alpha \bE\:\!\bracks{ X } \equiv \alpha$. For $\alpha \in \mo{]} 0 \:\!,\:\!\! 1/2 \mc{]}$, the vertical gaps between this line and the two branches of the expectile star-shaped set $\cE_{\:\! X \;\!\!}$ (cf. Section~\ref{sec:expectile-starshaped-set}) may then be computed by a straightforward algebraic manipulation of~\eqref{eq:two-point-expectile}, yielding
\[
\alpha - \alpha \:\! e_X(\alpha) = \frac{\alpha \:\! (1 - p)(1 - 2\:\!\alpha)}{(1 - p) + \alpha \:\! (2\:\!p - 1)} \:\! ,
\quad
\alpha \:\! e_X(1 - \alpha) - \alpha
= \frac{\alpha \:\! (1 - p)(1 - 2\:\!\alpha)}{p - \alpha \:\! (2\:\!p - 1)} \:\! .
\]

In the typical case of a right-skewed distribution, namely for $p < 1/2$, one has
\[
p - \alpha \:\! (2\:\!p - 1) < (1 - p) + \alpha \:\! (2\:\!p - 1) \:\! , \quad \alpha \in \mo{]} 0 \:\!,\:\!\! 1/2 \mc{[} \:\! .
\]
It follows that
\[
0 < \alpha - \alpha \:\! e_X(\alpha) < \alpha \:\! e_X(1 - \alpha) - \alpha \:\! , \quad \alpha \in \mo{]} 0 \:\!,\:\!\! 1/2 \mc{[} \:\! .
\]

This example further illustrates the sharpness of Proposition~\ref{prop:lower-upper-bounds}.
Indeed, as $p \downarrow 0$, formula~\eqref{eq:two-point-expectile} implies
\[
e_X(\alpha) \downarrow \frac{\alpha}{1-\alpha} \;\! ,
\quad
e_X(1 - \alpha) \uparrow \frac{1 - \alpha}{\alpha}\;\! ,
\]
thereby saturating the two one-sided estimates in~\eqref{eq:lower-upper-bounds}.

Integrating over $\alpha \in \mo{]} 0 \:\!,\:\!\! 1/2 \mc{]}$, one therefore finds that the area of the lower portion of~$\cE_{\:\! X \;\!\!}$, that is, the region between the egalitarian line and the branch
$\alpha \mapsto \alpha \:\! e_X(\alpha)$, is strictly smaller than the area of its upper portion, lying between the egalitarian line and the branch $\alpha \mapsto \alpha \:\! e_X(1 - \alpha)$.

More precisely, in the regime $p \downarrow 0$, corresponding to an increasingly extreme upper-tail asymmetry, the two areas admit the asymptotic limits
\[
\int_0^{1/2} \:\!\! \bracks[\big]{ \alpha - \alpha \:\! e_X(\alpha) } \:\! d\alpha
\; \longrightarrow \;
\frac{3}{4} - \ln 2 \; \approx \; 0.057 \:\! ,
\qquad
\int_0^{1/2} \:\!\! \bracks[\big]{ \alpha \:\! e_X(1 - \alpha) - \alpha } \:\! d\alpha
\; \longrightarrow \;
\frac{1}{4} \:\! .
\]
Accordingly, the upper area is asymptotically about $4.4$ times larger than the lower one.
This quantifies the extent to which an increasingly concentrated upper tail induces a pronounced, yet still quantitatively controlled, geometric imbalance of~$\cE_{\:\! X \;\!\!}$.

In particular, as $p \downarrow 0$, one finds
\begin{equation}
\label{eq:area-limit} 
\Leb^{2}\bracks[\big]{ \cE_{\:\! X \;\!\!} }
\equiv
\int_{0}^{1/2} \:\!\! \alpha \:\! \bracks[\big]{ e_X(1 - \alpha) - e_X(\alpha) } \;\! d\alpha
\; \longrightarrow \;
1 - \ln 2 \;\! ,
\end{equation}
showing that the constant in~\eqref{eq:area-expectile-starshaped-set_nor-bound} is indeed sharp within the normalized nonnegative class.
This also confirms that the adopted normalization is naturally aligned with the extremal geometry of the model.

Moreover, in the same limit $p \downarrow 0$, one observes that
\[
\bE\:\!\bracks[\Big]{\:\! \abs[\big]{\:\! X \:\!\! - \bE\:\!\bracks{ X } \:\!} \:\!} \uparrow \:\! 2 \;\! .
\]
Hence, the limit in~\eqref{eq:area-limit} is also fully consistent with the upper estimate in~\eqref{eq:area-expectile-starshaped-set_bound} and, taken together, these considerations make clear that the latter cannot be optimal within the normalized nonnegative class.

Finally, the two-point model exhibits a further geometric feature: the convexity of the boundary of the associated star-shaped set.
Indeed, the scaled mapping $\alpha \mapsto \alpha \:\! e_X(\alpha)$, $\alpha \in \mo{]} 0 \:\!,\:\!\! 1 \;\!\!\mc{[}$, has second derivative
\[
\alpha \longmapsto \frac{2 \:\! (1 - p)^2 \:\!\!}{\bracks[\big]{ (1 - p) + \alpha(2\:\!p - 1) }^3 \:\!\!} \:\! , \quad \alpha \in \mo{]} 0 \:\!,\:\!\! 1 \;\!\!\mc{[} \:\! ,
\]
which is strictly positive for every $p \in \mo{]} 0 \:\!,\:\!\! 1 \;\!\!\mc{[}$.

We remark that this two-point specification may be regarded as an extremal configuration within the class of nonnegative random variables with unit mean.
Indeed, despite its minimal support, it induces a markedly asymmetric expectile geometry while preserving convexity.
As such, it serves as a particularly transparent illustration of how a highly concentrated upper-tail mass may shape the overall geometry of the associated expectile star-shaped set.

The explicit two-point characterization of the area functional naturally leads to a calibration procedure.
More precisely, for any $p \in \mo{]} 0 \:\!,\:\!\! 1 \;\!\!\mc{[}$, let
\[
\begin{split}
\cI(p) &\coloneqq \Leb^{2}\bracks[\big]{ \cE_{\:\! X \;\!\!} } \\[1.5ex]
&=
\begin{cases}
\:\! \dfrac{(1 - p) \bracks[\big]{ (2\:\!p - 1)(1 - \ln 2) - p \ln p + (1 - p)\ln\:\!(1 - p) }}{(2\:\!p - 1)^{\:\! 3}\;\!\! } \:\! , & \text{if $p \neq \dfrac{1}{2}$} \:\! , \\[2.25ex]
\:\! \dfrac{1}{12} \:\! , & \text{if $p = \dfrac{1}{2}$} \:\! .
\end{cases}
\end{split}
\]
It follows that the mapping $\cI(\;\! \bm{\cdot} \;\!)$ is strictly decreasing on the whole interval $\mo{]} 0 \:\!,\:\!\! 1 \;\!\!\mc{[}$ and therefore invertible on its image.
This allows one to introduce a normalized percentage or degree of inequality associated with the distribution of~$X$\;\!\!, given by
\[
1 - \cI^{\;\! -1 \;\!\!} \big{(} \Leb^{2}\bracks[\big]{ \cE_{\:\! X \;\!\!} } \:\! \big{)} \:\! .
\]
From a conceptual perspective, this construction may be viewed as a calibration procedure whereby the geometric magnitude $\Leb^{2}\bracks[\big]{ \cE_{\:\! X \;\!\!} }$ is mapped back to a one-dimensional scale of asymmetry, thus yielding an interpretable index that increases as the distribution departs further from symmetry.

\begin{remark}
\label{rem:robin-hood} 

As an expectile-based counterpart of the classical Hoover (or Robin Hood) measure of dispersion, one may contemplate the index
\[
\sup_{\alpha \in \mo{]} 0 \:\!, 1/2 \mc{]}} \:\!\! \alpha \:\! \bracks[\big]{ e_X(1 - \alpha) - e_X(\alpha) } \:\! .
\]
In the particular instance of a two-point distribution, the stationary points of the smooth mapping
\[
\alpha \longmapsto \alpha \:\! \bracks[\big]{ e_X(1 - \alpha) - e_X(\alpha) }
=
\frac{\alpha \:\! (1 - p)(1 - 2\:\!\alpha)}{\bracks[\big]{p - \alpha \:\! (2\:\!p - 1)}\bracks[\big]{(1 - p) + \alpha \:\! (2\:\!p - 1)}} \:\! , \quad \alpha \in \mo{]} 0 \:\!,\:\!\! 1/2 \mc{]} \;\! ,
\]
are identified by the vanishing of the quadratic polynomial
\[
(2\:\!p - 1)^2 \:\! \alpha^2 \:\!\! + 4\:\!p\:\!(1 - p)\alpha - p\:\!(1 - p) \:\! .
\]

In the symmetric configuration $p = 1/2$, the above stationarity condition simplifies considerably, reducing to a linear equation whose unique solution is
\[
\alpha^\ast\;\!\!(1/2\:\!) = 1/4 \:\! .
\]

When $p \neq 1/2$, by contrast, the admissible root takes the explicit form
\[
\alpha^\ast\;\!\!(p)
=
\frac{\sqrt{p\:\!(1 - p)} - 2\:\!p\:\!(1 - p)}{(2\:\!p - 1)^2\:\!\!} \:\! .
\]
This value furnishes the unique maximizer of the index and, for every $p \neq 1/2$, satisfies
\[
\alpha^\ast\;\!\!(p) < 1/4 \:\! .
\]

Consequently, the location of the maximizing expectile level itself becomes informative, providing an additional descriptor of the asymmetry structure of the distribution.

Finally, one notes that $\alpha^\ast\;\!\!(p) \downarrow 0$ as $p \downarrow 0$, indicating that the maximally asymmetric level is attained at progressively smaller~$\alpha$ as the underlying distribution becomes increasingly skewed.

Further details and complementary arguments are discussed in Appendix~\ref{sec:appendix_maximization}.

\end{remark}







\section{Multivariate generalizations} 
\label{sec:multivariate-generalizations}

We now move to a multivariate framework, where the expectile regions introduced so far are no longer intervals, but higher-dimensional geometric objects.
The focus is not on a further axiomatic abstraction of expectiles, but rather on the dimensional enlargement of the regions they generate, from one-dimensional segments to subsets of $\R^d$\;\!\!, with $d \in \N^{\:\! *}\;\!\!$.

This transition naturally brings star-shaped sets into the picture, offering a geometric language that is particularly well suited to describe dispersion, centrality, and stochastic orderings beyond the scalar case.
Moreover, this geometric viewpoint allows us to extend, in a natural and systematic way, all the bounds derived in the preceding sections to both multivariate regions and their associated star-shaped sets.

The present section is devoted to the development of this perspective. To this end, $\Vol_{\;\! d}$ denotes the $d$-dimensional Lebesgue measure, that is,
\[
\Vol_{\;\! d}\bracks{\, \bm{\cdot} \,} \coloneqq \Leb^{d} \bracks{\, \bm{\cdot} \,} \:\! ,
\]
for every $d \ge 2$.
The Euclidean inner product is written as $\bracket{\, \bm{\cdot} \, , \textbf{-} \,} \equiv \bracket{\, \bm{\cdot} \, , \textbf{-} \,}_{\;\! \R^d \;\!\!}$.
No ambiguity will arise from omitting the explicit reference to~$\R^d \;\!\!$.


\subsection{Multivariate expectile regions and volume bounds} 
\label{subsec:multivariate-expectile-regions}

Unlike the scalar case, the multivariate setting does not admit a canonical expectile construction.
We therefore follow a projection-based approach, reconstructing multivariate information from the family of expectile regions associated with all one-dimensional projections of the random vector.

Let $\bX \in \Lped{1}{\bP}{d}$. For any $\alpha \in \mo{]} 0 \:\!,\:\!\! 1/2 \mc{]}$, we define the \emph{expectile region} of $\bX$ at level $\alpha$ as the set
\begin{equation}
\label{eq:multivariate-expectile-regions_def} 
\begin{split}
E_{\:\!\! \bX}^{\;\! d}\;\!\!(\alpha) &\eqdef \Set{\:\!\! \bz \in \R^{d \:\!\!} \:\!\ \Big{|} \:\!\ \forall \ \bu \in \mathbb{S}^{\:\! d - 1 \:\!\!} , \ \bracket{\bz\:\!,\:\!\!\bu} \in E_{\bracket{\bX\:\!\!,\bu}}\:\!\!(\alpha) \:\!\!} \\[0.5ex]
&\equiv \Set{\:\!\! \bz \in \R^{d \:\!\!} \:\!\ \Big{|} \:\!\ \forall \ \bu \in \mathbb{S}^{\:\! d - 1 \:\!\!} , \ e_{\bracket{\bX\:\!\!,\bu}}\:\!\!(\alpha) \le \bracket{\bz\:\!,\:\!\!\bu} \le e_{\bracket{\bX\:\!\!,\bu}\:\!\!}(1 - \alpha) \:\!\!} \:\!\! ,
\end{split}
\end{equation}
in accordance with~\eqref{eq:expectiles-regions}, where
\[
\mathbb{S}^{\:\! d - 1 \:\!\!} \coloneqq \Set{\:\!\! \bu \in \R^{d \:\!\!} \:\!\ \big{|} \:\!\ \norm{\:\! \bu \:\!} = 1 \:\!\!} 
\]
denotes the unit sphere in $\R^d$\;\!\!. Since $e_{-Y}(\beta) = - \:\! e_Y(1 - \beta\:\!)$ for every $Y \in \Lped{1 \;\!\!}{\bP}{}$ and $\beta \in \mo{]} 0 \:\!,\:\!\! 1 \;\!\!\mc{[}$, the defining condition~\eqref{eq:multivariate-expectile-regions_def} can equivalently be expressed solely in terms of upper half-spaces:
\begin{equation}
\label{eq:multivariate-expectile-regions} 
E_{\:\!\! \bX}^{\;\! d}\;\!\!(\alpha)
\:\! = \:\!\!
\bigcap_{\bu \:\!  \in \;\! \mathbb{S}^{\:\! d - 1 \:\!\!}} \:\!\! \Set{\:\!\! \bz \in \R^{d \:\!\!} \:\!\ \Big{|} \:\!\ \bracket{\bz\:\!,\:\!\!\bu} \le e_{\bracket{\bX\:\!\!,\bu}\:\!\!}(1 - \alpha) \:\!\!} \:\!\! .
\end{equation}

For any $\alpha \in \mo{]} 0 \:\!,\:\!\! 1/2 \mc{]}$, the set $E_{\:\!\! \bX}^{\;\! d}\;\!\!(\alpha)$ is a closed and convex subset of $\R^d$\;\!\!, whose support function is given by the directional expectile
\[
\bu \longmapsto e_{\bracket{\bX\:\!\!,\bu}\:\!\!}(1 - \alpha) \:\! , \quad \bu \in \mathbb{S}^{\:\! d - 1 \:\!\!} ,
\] 
and its interior is nonempty whenever the law of $\bX$ is not concentrated on a proper affine subspace of $\R^d$\;\!\!.

Identity~\eqref{eq:cx-expectiles-regions} immediately yields the following multivariate characterization: for any $\bY \in \Lped{1}{\bP}{d}$,
\begin{equation}
\label{eq:cx-multivariate-expectiles-regions} 
\bX \le_{\;\!\text{lcx}} \;\!\! \bY \;\!\!
\quad \Longleftrightarrow \quad
\forall \ \alpha \in \mo{]} 0 \:\!,\:\!\! 1/2 \mc{]} \:\! , \ E_{\:\!\! \bX}^{\;\! d}\;\!\!(\alpha) \subseteq E_{\bY}^{\:\! d}(\alpha) \:\! .
\end{equation}

For convenience, we recall that the relation $\bX \le_{\;\!\text{lcx}} \;\!\! \bY$ denotes the linear-convex order, that is,
\[
\bE\:\!\bracks[\big]{ (\phi \circ \ell\:\!)(\bX) }
\le
\bE\:\!\bracks[\big]{ (\phi \circ \ell\:\!)(\bY) }
\]
for every linear map $\ell \colon \R^d\:\!\! \to \R$ and every convex and nondecreasing function $\phi \colon \R \to \R$.
Equivalently, all one-dimensional projections $\bracket{\bX,\:\!\!\bu}$ and $\bracket{\bY\:\!\!,\:\!\!\bu}$, $\bu \in \mathbb{S}^{\:\! d - 1 \:\!\!}$, are ordered in the usual convex order.

A first geometric control on multivariate expectile regions follows from their behavior under marginal projections and from the basic containment structure induced along each coordinate direction. This preliminary estimate is provided in the next result.

\begin{proposition}
\label{prop:ball-rectangle} 
Let $\bX \in \Lped{1}{\bP}{d}$. Then, for every $\alpha \in \mo{]} 0 \:\!,\:\!\! 1/2 \mc{]}$,
\begin{equation}
\label{eq:ball-rectangle} 
E_{\:\!\! \bX}^{\;\! d}\;\!\!(\alpha)
\subseteq 
\prod_{i = 1}^d E_{X_{\:\! i}}\:\!\!(\alpha)
\;\! \bigcap \;\!
B\:\!\bigg{(} 0 \;\! , \:\!\! \frac{1 - \alpha}{\alpha} \bE\:\!\bracks[\big]{ \norm{\:\! X \:\!} } \bigg{)} \:\! .
\end{equation}
\end{proposition}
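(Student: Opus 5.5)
Both inclusions come directly from the definition~\eqref{eq:multivariate-expectile-regions_def}: a point $\bz \in E_{\:\!\! \bX}^{\;\! d}\;\!\!(\alpha)$ satisfies $\bracket{\bz\:\!,\:\!\!\bu} \in E_{\bracket{\bX\:\!\!,\bu}}\:\!\!(\alpha)$ for \emph{every} unit direction $\bu$. The plan is therefore to test this condition on suitably chosen directions and to bound the resulting one-dimensional expectiles with the scalar estimates of Section~\ref{sec:expectile-dispersion}. (The norm in the radius is read as $\norm{\bX}$.)

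\textbf{Step 1 (rectangle).} Choose $\bu = \bm{e}_i$, the $i$-th canonical basis vector. Then $\bracket{\bX\:\!,\:\!\!\bm{e}_i} = X_i$ and $\bracket{\bz\:\!,\:\!\!\bm{e}_i} = z_i$. The defining condition gives $z_i \in E_{X_i}(\alpha)$ for each $i = 1,\dots,d$, so $\bz \in \prod_i E_{X_i}(\alpha)$.

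\textbf{Step 2 (ball).} Take $\bz \ne 0$ in the region and set $\bu = \bz/\norm{\bz}$. By~\eqref{eq:multivariate-expectile-regions},
\[
\norm{\bz} = \bracket{\bz\:\!,\:\!\!\bu} \le e_{\bracket{\bX\:\!\!,\bu}}(1-\alpha).
\]
Corollary~\ref{cor:upper-bound}, applied to the integrable scalar $Y = \bracket{\bX\:\!,\:\!\!\bu}$, gives
\[
e_Y(1-\alpha) \le \frac{1-\alpha}{\alpha}\,\bE\bracks{Y_+}.
\]
Since $Y_+ \le \abs{Y} \le \norm{\bX}\,\norm{\bu} = \norm{\bX}$ by Cauchy--Schwarz, it follows that
\[
\norm{\bz} \le \frac{1-\alpha}{\alpha}\,\bE\bracks[\big]{\norm{\bX}},
\]
which is membership in the closed ball. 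The case $\bz = 0$ is trivial.

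Intersecting the two inclusions gives~\eqref{eq:ball-rectangle}. There is no real obstacle here. The only points needing care are the normalization choice $\bu = \bz/\norm{\bz}$, which turns the half-space constraint into a norm bound, and the fact that Corollary~\ref{cor:upper-bound} needs no sign assumption on $Y$. That matters because the projections of $\bX$ need not be nonnegative.
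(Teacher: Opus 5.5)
Your proof is correct and follows the paper's argument essentially verbatim: the canonical basis directions give the rectangle, and Corollary~\ref{cor:upper-bound} applied to $\bracket{\bX,\bu}$ together with Cauchy--Schwarz gives the ball. Your explicit choice $\bu = \bz/\norm{\bz}$ is simply the direction attaining the supremum in the identity $\norm{\bz} = \sup_{\bu \in \mathbb{S}^{d-1}} \bracket{\bz,\bu}$ that the paper invokes.
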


\begin{proof}
First observe that, upon selecting the canonical basis vectors of $\R^d \;\!\!$ in~\eqref{eq:multivariate-expectile-regions_def}, one readily obtains
\[
E_{\:\!\! \bX}^{\;\! d}\;\!\!(\alpha)
\subseteq
\prod_{i = 1}^d E_{X_{\:\! i}}\:\!\!(\alpha) \:\! .
\]

Regarding the spherical constraint, fix $\bu \in \mathbb{S}^{\:\! d - 1 \:\!\!}$. By Corollary~\ref{cor:upper-bound} applied to $\bracket{\bX,\:\!\!\bu}$, together with the Cauchy--Schwarz inequality, we find 
\[
e_{\bracket{\bX\:\!\!,\bu}\:\!\!}(1 - \alpha)
\le 
\frac{1 - \alpha}{\alpha} \bE\:\!\bracks[\Big]{\:\! \abs[\big]{ \bracket{\bX,\:\!\!\bu} } \:\!}
\le
\frac{1 - \alpha}{\alpha} \bE\:\!\bracks[\big]{ \norm{\:\! \bX \:\!} } \:\! .
\]
Combining this estimate with the representation~\eqref{eq:multivariate-expectile-regions}, and recalling that every $\bz \in \R^d \;\!\!$ satisfies
\[
\norm{\:\! \bz \:\!}
= \:\!\!
\sup_{\bu \:\! \in \:\! \mathbb{S}^{\:\! d - 1 \:\!\!}} \;\!\! \bracket{\bz\:\!,\:\!\!\bu} \:\! ,
\]
one directly deduces the ball inclusion in~\eqref{eq:ball-rectangle}. \qedhere 
\end{proof}

As will emerge from the subsequent analysis, the enclosure~\eqref{eq:ball-rectangle} should be regarded merely as a preliminary geometric estimate, to be considerably refined in the developments that follow.



Against this background, we complement the one-dimensional analysis developed in Proposition~\ref{prop:lenght-expectiles-regions}, where deviation bounds for univariate expectile regions were expressed in terms of their length.
Our aim is to extend this geometric control to the multivariate setting by quantifying the deviation of multivariate expectile regions through their $d$-dimensional volume.

To prepare the ground, we recall the definition of the Gamma function on the positive real axis:
\[
\Gamma(z)
\coloneqq \;\!\!
\int_0^\infty \:\!\! \tau^{\;\! z - 1} \:\! e^{-\tau} \:\! d\tau \:\! , \quad z \in \mo{]} 0 \:\!,\:\!\! \infty \mc{[} \:\! .
\]
This function naturally arises in the constants governing geometric inequalities in $\R^d \;\!\!$, and it will play a central role in the volume estimates developed below.

An elementary control on the geometry of multivariate expectile regions stems from the set inclusion furnished by Proposition~\ref{prop:ball-rectangle}, stated below in the nonnegative setting.
These bounds illustrate a recurring theme of the paper: geometric size is quantitatively controlled by classical measures of statistical dispersion.
We also refer to Proposition~\ref{cor:lower-upper-bounds} for comparison.

\begin{proposition}
\label{cor:ball-rectangle} 
Let $\bX \in \Lped{1}{\bP}{d,+ \:\!\!}$. Then, for every $\alpha \in \mo{]} 0 \:\!,\:\!\! 1/2 \mc{]}$,
\begin{equation}
\label{eq:volume-multivariate-expectiles-regions_ball-rectangle} 
\Vol_{\;\! d}\bracks[\big]{ E_{\:\!\! \bX}^{\;\! d}\;\!\!(\alpha) }
\le
{ \bracks[\bigg]{\;\!\! \frac{1 - 2\:\!\alpha}{\alpha\:\!(1 - \alpha)} \;\!\!} }^d \:\!\! \prod_{i = 1}^{d \:\!\!} \bE\:\!\bracks{ X_{\:\! i} } \:\! .
\end{equation}
\end{proposition}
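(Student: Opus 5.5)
The plan is to use only the rectangle part of the enclosure in Proposition~\ref{prop:ball-rectangle} and to bound each side length with the one-dimensional estimate of Proposition~\ref{cor:lower-upper-bounds}. By Proposition~\ref{prop:ball-rectangle}, $E_{\bX}^{\;\! d}(\alpha)\subseteq\prod_{i=1}^d E_{X_{\:\! i}}(\alpha)$. Both sets are closed, hence Borel, so monotonicity of Lebesgue measure and its product structure on boxes give
\[
\Vol_{\;\! d}\bracks[\big]{ E_{\bX}^{\;\! d}(\alpha) }
\le
\prod_{i=1}^d \Leb\bracks[\big]{ E_{X_{\:\! i}}(\alpha) }
=
\prod_{i=1}^d \bracks[\big]{ e_{X_{\:\! i}}(1-\alpha)-e_{X_{\:\! i}}(\alpha) } .
\]

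Next, each factor is bounded separately. Every coordinate satisfies $X_{\:\! i}\in \Lped{1}{\bP}{+}$. If $X_{\:\! i}\neq 0$ and $\alpha<1/2$, Proposition~\ref{cor:lower-upper-bounds} gives
\[
\Leb\bracks[\big]{E_{X_{\:\! i}}(\alpha)} < \frac{1-2\alpha}{\alpha(1-\alpha)}\,\bE\bracks{X_{\:\! i}} .
\]
Alternatively, the two bounds $e_{X_{\:\! i}}(\alpha)\ge \tfrac{\alpha}{1-\alpha}\bE\bracks{X_{\:\! i}}$ and $e_{X_{\:\! i}}(1-\alpha)\le\tfrac{1-\alpha}{\alpha}\bE\bracks{X_{\:\! i}}$ from Proposition~\ref{prop:lower-upper-bounds} can be subtracted directly. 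The algebra is $\tfrac{1-\alpha}{\alpha}-\tfrac{\alpha}{1-\alpha}=\tfrac{1-2\alpha}{\alpha(1-\alpha)}$, which yields the non-strict version for every $\alpha\in\mo{]}0,1/2\mc{]}$.

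The only care needed is in the degenerate cases. If $\alpha=1/2$, each interval reduces to the point $\{\bE\bracks{X_{\:\! i}}\}$, so both sides vanish. If some $X_{\:\! i}=0$ a.s., then $E_{X_{\:\! i}}(\alpha)=\{0\}$, the box is contained in a hyperplane and has zero volume, and the right-hand side is also $0$. In every case each factor is nonnegative and bounded by the corresponding nonnegative quantity, so multiplying the $d$ inequalities gives \eqref{eq:volume-multivariate-expectiles-regions_ball-rectangle}.

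I expect no real obstacle here. The main step is the elementary observation that volume is monotone under inclusion and multiplicative on boxes; everything else is already contained in the univariate results. The ball part of the enclosure in Proposition~\ref{prop:ball-rectangle} is not needed for this bound.
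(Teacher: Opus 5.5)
Your proof is correct and follows essentially the paper's route: the rectangle part of Proposition~\ref{prop:ball-rectangle}, combined with the one-sided bounds of Proposition~\ref{prop:lower-upper-bounds}. The only difference is cosmetic. The paper first enlarges the box to $\prod_{i} \mo{[} \tfrac{\alpha}{1-\alpha}\bE\bracks{X_i} , \tfrac{1-\alpha}{\alpha}\bE\bracks{X_i} \mc{]}$ and then takes its volume, whereas you take the volume of $\prod_i E_{X_i}(\alpha)$ and bound each side length. Your subtraction argument is valid on all of $\mo{]}0,1/2\mc{]}$, so it already covers the degenerate cases and they need no separate treatment.
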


\begin{proof}
Fix $j \in \Set{\! 1, \dots, d \:\!\!}$. Since $X_j \ge 0$ (\:\!$\bP$-\:\!a.s.), Proposition~\ref{prop:lower-upper-bounds} provides the inclusion
\[
E_{X_j}\:\!\!(\alpha) \equiv \mo{[} e_{X_j \:\!\!}(\alpha) \:\!,\;\!\! e_{X_j \:\!\!}(1 - \alpha) \mc{]} \subseteq \mo{\Big[} \frac{\alpha}{1 - \alpha} \bE\:\!\bracks{ X_j } \:\!,\:\!\! \frac{1 - \alpha}{\alpha} \bE\:\!\bracks{ X_j } \mc{\Big]} \:\! .
\]
In this nonnegative framework, Proposition~\ref{prop:ball-rectangle} therefore yields
\[
E_{\:\!\! \bX}^{\;\! d}\;\!\!(\alpha)
\subseteq
\prod_{i = 1}^d \;\! \mo{\Big[} \frac{\alpha}{1 - \alpha} \bE\:\!\bracks{ X_{\:\! i} } \:\!,\:\!\! \frac{1 - \alpha}{\alpha} \bE\:\!\bracks{ X_{\:\! i} } \mc{\Big]} \:\! .
\]
The right-hand side is a hyper-rectangle whose volume equals the product of its side-lengths; this quantity coincides exactly with the expression appearing on the right-hand side of~\eqref{eq:volume-multivariate-expectiles-regions_ball-rectangle}. \qedhere
\end{proof}

The following result may be viewed as the multivariate counterpart of Proposition~\ref{prop:lenght-expectiles-regions}.
Without imposing any sign condition on the components of $\bX$, it establishes an explicit volume estimate for $E_{\:\!\! \bX}^{\;\! d}\;\!\!(\alpha)$ in terms of $d$, $\alpha$, and the $d$-th moment of the deviation of $\bX$ from its mean, reflecting the underlying geometry of multivariate expectile regions.
Further discussion is postponed to Remarks~\ref{rem:volume-multivariate-expectiles-regions_comparison} and~\ref{rem:volume-multivariate-expectiles-regions}.

\begin{proposition}
\label{prop:volume-multivariate-expectiles-regions} 
Let $\bX \in \Lped{1}{\bP}{d}$. Then, for every $\alpha \in \mo{]} 0 \:\!,\:\!\! 1/2 \mc{]}$,
\begin{equation}
\label{eq:volume-multivariate-expectiles-regions} 
\Vol_{\;\! d}\bracks[\big]{ E_{\:\!\! \bX}^{\;\! d}\;\!\!(\alpha) }
\le
\frac{2 \:\! \Gamma^{\;\! d-1}\big{(}\frac{d}{2}\big{)}}{d \:\! (d - 1)^d \:\! \Gamma^{\;\! d}\big{(}\frac{d - 1}{2}\big{)}} \;\! \bigg{(} \frac{1 - 2\:\!\alpha}{\alpha} \bigg{)}^{\:\!\! d \:\!\!} \:\!\! \bE\:\!\bracks[\Big]{\:\! \norm[\big]{\:\! \bX \:\!\! - \bE\:\!\bracks{ \bX } \:\!} \:\!} ^ {d \:\!\!} \:\!\! .
\end{equation}
\end{proposition}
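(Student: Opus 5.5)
The plan is to bound the volume of the convex body $K\coloneqq E_{\bX}^{\;\! d}(\alpha)$ through its mean width, using the Urysohn inequality. The width of $K$ in each direction will be controlled by the one-dimensional estimate of Proposition~\ref{prop:lenght-expectiles-regions}. If $K$ is empty or lower-dimensional there is nothing to prove, so assume it is a convex body.

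\textbf{Step 1: directional widths.} For $\bu\in\mathbb{S}^{d-1}$ the support function of $K$ is $h_K(\bu)= e_{\bracket{\bX,\bu}}(1-\alpha)$. Since $e_{-Y}(\beta)=-e_Y(1-\beta)$, the width of $K$ in direction $\bu$ satisfies
\[
w_K(\bu)=h_K(\bu)+h_K(-\bu)\le e_{\bracket{\bX,\bu}}(1-\alpha)-e_{\bracket{\bX,\bu}}(\alpha)=\Leb\bracks[\big]{E_{\bracket{\bX,\bu}}(\alpha)} .
\]
We apply Proposition~\ref{prop:lenght-expectiles-regions} with centre $m=\bE\bracks{\bracket{\bX,\bu}}$, which is admissible because the mean lies in every univariate expectile region when $\alpha\le 1/2$. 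This gives
\[
w_K(\bu)\le \frac{1-2\alpha}{\alpha}\,\bE\bracks[\big]{\abs{\bracket{\bY,\bu}}},\qquad \bY\coloneqq \bX-\bE\bracks{\bX}.
\]

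\textbf{Step 2: average over the sphere.} Let $\sigma$ be the normalized uniform measure on $\mathbb{S}^{d-1}$. For every $\by\in\R^d$ the classical identity
\[
\int_{\mathbb{S}^{d-1}}\abs{\bracket{\by,\bu}}\,d\sigma(\bu)=\norm{\by}\,\frac{\Gamma(d/2)}{\sqrt{\pi}\,\Gamma\big(\frac{d+1}{2}\big)}
\]
holds. Combining it with Fubini--Tonelli, the mean width $W(K)=\int w_K\,d\sigma$ satisfies
\[
W(K)\le \frac{1-2\alpha}{\alpha}\,\frac{\Gamma(d/2)}{\sqrt{\pi}\,\Gamma\big(\frac{d+1}{2}\big)}\,\bE\bracks{\norm{\bY}} .
\]

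\textbf{Step 3: Urysohn's inequality.} Urysohn's inequality states $\Vol_d(K)\le \omega_d\,(W(K)/2)^d$, where $\omega_d=\pi^{d/2}/\Gamma(d/2+1)$ is the volume of the unit ball. Inserting the bound from Step~2, the powers of $\pi$ cancel. Using $\Gamma(d/2+1)=\frac d2\Gamma(d/2)$ and $2\Gamma\big(\frac{d+1}{2}\big)=(d-1)\Gamma\big(\frac{d-1}{2}\big)$, the constant becomes
\[
\frac{2\,\Gamma^{d-1}(d/2)}{d\,(d-1)^d\,\Gamma^{d}\big(\frac{d-1}{2}\big)} .
\]
This is exactly the constant in~\eqref{eq:volume-multivariate-expectiles-regions}, with the moment factor read as $\big(\bE\bracks{\norm{\bX-\bE\bracks{\bX}}}\big)^d$. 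The recurrence for $\Gamma$ requires $d\ge2$, which matches the standing convention for $\Vol_d$.

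\textbf{Main obstacle.} The conceptual step is choosing Urysohn's inequality, i.e.\ passing from directional widths to volume through mean width. Cruder routes lose the sharp constant: bounding $K$ by the box of Proposition~\ref{prop:ball-rectangle}, or by a ball of radius equal to the maximal width. Once that choice is made, the only real work is the Gamma-function bookkeeping: the spherical average of $\abs{\bracket{\by,\bu}}$ and the simplification to the stated form. I would check that bookkeeping carefully, e.g.\ against $d=2$, where the constant should equal $1/(2\pi)$.
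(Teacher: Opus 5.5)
Your proposal is correct and follows essentially the same route as the paper's proof: you bound the directional widths via Proposition~\ref{prop:lenght-expectiles-regions} with centre $m=\bE\bracks{\bracket{\bX,\bu}}$, average over the sphere with the mean absolute projection identity, and then apply Urysohn's inequality. Your spherical constant $\Gamma(d/2)/(\sqrt{\pi}\,\Gamma(\frac{d+1}{2}))$ coincides with the paper's $2\Gamma(d/2)/(\sqrt{\pi}\,(d-1)\Gamma(\frac{d-1}{2}))$. One correction to your planned sanity check: for $d=2$ the constant is $2\Gamma(1)/(2\,\Gamma^2(1/2))=1/\pi$, which is the value $\mathsf{K}_{\Gamma}(2)$ recorded in the paper, not $1/(2\pi)$; so your bookkeeping is right when it produces $1/\pi$.
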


\begin{proof}
Fix $\bu \in \mathbb{S}^{\:\! d - 1 \:\!\!}$.
Since $\bracket{\bX,\:\!\!\bu} - \bE\:\!\bracks[\big]{ \bracket{\bX,\:\!\!\bu} } = \bracket[\big]{\:\!\! \bX \:\!\! - \bE\:\!\bracks{ X } ,\:\!\! \bu }$, Proposition~\ref{prop:lenght-expectiles-regions} immediately provides a one-dimensional control on the width of $E_{\:\!\! \bX}^{\;\! d}\;\!\!(\alpha)$, $\alpha \in \mo{]} 0 \:\!,\:\!\! 1/2 \mc{]}$, along $\bu$.
Specifically,
\begin{equation}
\label{eq:width} 
w_{\:\!\!E_{\:\!\! \bX}^{\;\! d}\;\!\!(\alpha)}\:\!\!(\bu)
\coloneqq
\Leb\:\!\bracks[\big]{ E_{\bracket{\bX\:\!\!,\bu}}\:\!\!(\alpha) } \le \frac{1 - 2\:\!\alpha}{\alpha} \:\! \bE\:\!\bracks[\Big]{\:\! \abs[\big]{\:\! \bracket[\big]{\bX \:\!\! - \bE\:\!\bracks{ X } ,\:\!\! \bu\:\!} \:\!} \:\!} \:\! .
\end{equation}

Averaging~\eqref{eq:width} over the unit sphere and applying the Fubini\:\!--Tonelli theorem yields the following representation of the mean width:
\[
\begin{split}
\bar{w}_{\:\!\!E_{\:\!\! \bX}^{\;\! d}\;\!\!(\alpha)} &\coloneqq \frac{1}{\omega_{d-1}} \:\! \int\nolimits_{\:\! \mathbb{S}^{\:\! d - 1 \:\!\!}} \:\!\! w_{\:\!\!E_{\:\!\! \bX}^{\;\! d}\;\!\!(\alpha)}\:\!\!(\bu) \, d\sigma(\bu) \\[2ex]
&\le \frac{1 - 2\:\!\alpha}{\alpha} \:\! \frac{1}{\omega_{d-1}} \:\! \int\nolimits_{\:\! \mathbb{S}^{\:\! d - 1 \:\!\!}} \:\!\! \bE\:\!\bracks[\Big]{\:\! \abs[\big]{\:\! \bracket[\big]{\bX \:\!\! - \bE\:\!\bracks{ X } ,\:\!\! \bu\:\!} \:\!} \:\!} \:\! d\sigma(\bu) \\[1.5ex]
&= \frac{1 - 2\:\!\alpha}{\alpha} \:\! \bE\:\!\bracks[\Bigg]{ \frac{1}{\omega_{d-1}} \:\! \int\nolimits_{\:\! \mathbb{S}^{\:\! d - 1 \:\!\!}} \:\! \abs[\big]{\:\! \bracket[\big]{\bX \:\!\! - \bE\:\!\bracks{ X } ,\:\!\! \bu\:\!} \:\!} \, d\sigma(\bu) } \:\! ,
\end{split}
\]
where $\omega_{d-1} \coloneqq 2\:\!\pi^{\:\! d/2 \:\!\!}/\Gamma\big{(}\frac{d}{2}\big{)}$ denotes the surface measure of $\mathbb{S}^{\:\! d - 1 \:\!\!}$.
The inner spherical integral is controlled by the classical mean absolute projection inequality, which implies
\[
\bar{w}_{\:\!\!E_{\:\!\! \bX}^{\;\! d}\;\!\!(\alpha)}
\le
\frac{1 - 2\:\!\alpha}{\alpha} \:\! \frac{2 \:\! \Gamma\big{(}\frac{d}{2}\big{)}}{\sqrt{\pi} \:\! (d - 1) \:\! \Gamma\big{(}\frac{d - 1}{2}\big{)}} \:\! \bE\:\!\bracks[\Big]{\:\! \norm[\big]{\:\! \bX \:\!\! - \bE\:\!\bracks{ \bX } \:\!} \:\!} \:\! .
\]

Finally, Urysohn’s inequality relates the $d$-dimensional volume of any convex body to its mean width. Applying it to $E_{\:\!\! \bX}^{\;\! d}\;\!\!(\alpha)$ gives
\[
\Vol_{\;\! d}\bracks[\big]{ E_{\:\!\! \bX}^{\;\! d}\;\!\!(\alpha) }
\le
\frac{2\:\!\pi^{\:\! d/2 \:\!\!}}{d\:\!\Gamma\big{(} \frac{d}{2} \big{)}} \;\! \bigg{(} \frac{\bar{w}_{\:\!\!E_{\:\!\! \bX}^{\;\! d}\;\!\!(\alpha)}}{2} \bigg{)}^{\:\!\! d \:\!\!} \! ,
\]
which concludes the proof. \qedhere
\end{proof}

\begin{remark}
\label{rem:volume-multivariate-expectiles-regions_comparison} 
Although the enclosure bound in~\eqref{eq:volume-multivariate-expectiles-regions_ball-rectangle} is, in principle, far less refined than the moment-based estimate~\eqref{eq:volume-multivariate-expectiles-regions}, it may nonetheless outperform it in concrete instances.
This occurs whenever $\bX$ displays a sufficiently sparse or anisotropic structure, so that the rectangular constraint induced by the marginal expectile regions becomes substantially tighter than the isotropic control provided by the $d$-th absolute central moment.
In this regard, the two bounds are intrinsically complementary: the first is sensitive to the coordinate-wise concentration of $\bX$\:\!\!, whereas the latter reflects its overall geometric dispersion.
\end{remark}

The estimate in Proposition~\ref{prop:volume-multivariate-expectiles-regions} naturally raises the question of how the dimensional constant
\begin{equation}
\label{eq:K_d} 
\mathsf{K}_{\;\! \Gamma \:\!}(d)
\coloneqq
\frac{2 \:\! \Gamma^{\;\! d - 1}\big{(}\frac{d}{2}\big{)}}{d \:\! (d - 1)^d \:\! \Gamma^{\;\! d}\big{(}\frac{d - 1}{2}\big{)}}
\end{equation}
varies as $d$ increases. 
This dependence is not merely technical: it reflects the intrinsic geometric contraction of multivariate expectile regions in high dimensions and isolates the geometric contribution in Proposition~\ref{prop:volume-multivariate-expectiles-regions} from the distributional factor $\bE\:\!\bracks[\big]{\:\! \norm[\big]{\:\! \bX \:\!\! - \bE\:\!\bracks{ \bX } \:\!} \:\!} ^ {d \:\!\!}$\:\!\!.

The following result elucidates this phenomenon; its proof is deferred to Appendix~\ref{sec:appendix_proof-of-K_d}.

\begin{proposition}
\label{prop:K_d} 
Let $\mathsf{K} \colon \mo{]} 1 ,\:\!\! \infty \mc{[} \to \mo{]} 0 \:\!,\:\!\! \infty \mc{[}$ be the mapping defined by
\begin{equation}
\label{eq:K_z} 
\mathsf{K}_{\;\! \Gamma \:\!}(z)
\coloneqq
\frac{2 \:\! \Gamma^{\;\! z - 1}\big{(}\frac{z}{2}\big{)}}{z \:\! (z - 1)^z \:\! \Gamma^{\;\! z}\big{(}\frac{z - 1}{2}\big{)}} \;\! , \quad z \in \mo{]} 1 ,\:\!\! \infty \mc{[} \:\! .
\end{equation}
Then $\mathsf{K}_{\;\! \Gamma}$ strictly decreases in $\mo{[} 2 ,\:\!\! \infty \mc{[}$. Moreover, as $z \to \infty$, it decays to zero at a super-exponential rate; more precisely,
\begin{equation}
\label{eq:lim_infty_K_d} 
\mathsf{K}_{\;\! \Gamma \:\!}(z)
=
\cO\Big{(} e^{\:\! z/2} \;\! z^{\;\! -(z \:\! + \:\! 1/2)\:\!\!} \Big{)} \:\! , \quad z \to \infty \:\! .
\end{equation}
In the opposite direction,
\begin{equation}
\label{eq:lim_1_K_d} 
\mathsf{K}_{\;\! \Gamma \:\!}(z) \uparrow 1 \quad \text{as} \quad z \downarrow 1 \;\! .
\end{equation}
\end{proposition}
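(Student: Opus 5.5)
The plan is to work throughout with the logarithm of $\mathsf{K}_{\Gamma}$. First rewrite $(z-1)\,\Gamma\big(\frac{z-1}{2}\big) = 2\,\Gamma\big(\frac{z+1}{2}\big)$, which gives the more convenient form
\[
\mathsf{K}_{\Gamma}(z) = \frac{2\,\Gamma^{z-1}\big(\frac{z}{2}\big)}{2^{z}\, z\, \Gamma^{z}\big(\frac{z+1}{2}\big)}, \qquad
\log \mathsf{K}_{\Gamma}(z) = \log 2 - z\log 2 - \log z + (z-1)\log\Gamma\big(\tfrac{z}{2}\big) - z\log\Gamma\big(\tfrac{z+1}{2}\big).
\]
This form has no singularity at $z = 1$, and it makes all three claims transparent.

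\emph{Limit at $z \downarrow 1$.} In the rewritten form, $\mathsf{K}_{\Gamma}(z) \to 2\,\Gamma^{0}(1/2)/(2\cdot 1\cdot \Gamma(1)) = 1$ by continuity of $\Gamma$. For the one-sided claim "$\uparrow 1$" I will compute the derivative of $\log \mathsf{K}_{\Gamma}$ at $z = 1$:
\[
-\log 2 - 1 + \log\Gamma(1/2) + \tfrac12\psi(1/2) - \log\Gamma(1) - \tfrac12\psi(1).
\]
Using $\psi(1/2) - \psi(1) = -2\log 2$ and $\log\Gamma(1/2) = \tfrac12\log\pi$, this equals $-2\log 2 - 1 + \tfrac12\log\pi < 0$. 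Hence $\mathsf{K}_{\Gamma}$ is decreasing near $1$, so $\mathsf{K}_{\Gamma}(z) < 1$ and it increases to $1$ as $z \downarrow 1$.

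\emph{Asymptotics.} Apply Stirling's formula $\log\Gamma(x) = (x-\tfrac12)\log x - x + \tfrac12\log(2\pi) + O(1/x)$ to both Gamma terms. The key difference is
\[
\log\Gamma\big(\tfrac{z+1}{2}\big) - \log\Gamma\big(\tfrac{z}{2}\big) = \tfrac12\log\tfrac{z}{2} + O(1/z).
\]
Then
\[
\log \mathsf{K}_{\Gamma}(z) = \log 2 - z\log 2 - \log z - \log\Gamma\big(\tfrac{z}{2}\big) - \tfrac{z}{2}\log\tfrac{z}{2} + O(1).
\]
Inserting Stirling for $\log\Gamma(z/2)$ gives
\[
-z\log z + \tfrac{z}{2} - \tfrac12\log z + O(1)
\]
after the $\pm z\log 2$ terms cancel. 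This is exactly $\mathsf{K}_{\Gamma}(z) = \mathcal{O}\big(e^{z/2} z^{-(z+1/2)}\big)$, which is super-exponential decay. This step is routine bookkeeping; I only need to check that the $\log 2$ contributions cancel as claimed.

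\emph{Monotonicity on $[2,\infty[$.} Differentiate:
\[
(\log \mathsf{K}_{\Gamma})'(z) = -\log 2 - \tfrac1z + \log\Gamma\big(\tfrac{z}{2}\big) - \log\Gamma\big(\tfrac{z+1}{2}\big) + \tfrac{z-1}{2}\psi\big(\tfrac z2\big) - \tfrac z2\psi\big(\tfrac{z+1}{2}\big).
\]
By concavity of $\psi$ (equivalently convexity of $\log\Gamma$), $\log\Gamma(\frac z2) - \log\Gamma(\frac{z+1}{2}) \le -\tfrac12\psi(\frac z2)$. This bounds the derivative by
\[
-\log 2 - \tfrac1z - \tfrac{z}{2}\Big[\psi\big(\tfrac{z+1}{2}\big) - \psi\big(\tfrac z2\big)\Big] < 0,
\]
since $\psi$ is increasing. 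So the derivative is negative on all of $]1,\infty[$, giving strict decrease, in particular on $[2,\infty[$, and incidentally recovering the behaviour near $1$.

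The main obstacle I anticipated was a crude trigamma bound failing near $z = 2$, where $\psi'(1/2)$ is large. The rewriting via $\Gamma\big(\frac{z+1}{2}\big)$ avoids the arguments near $1/2$, and the convexity inequality for $\log\Gamma$ is then sharp enough. If that inequality turns out to be misapplied (its direction must be checked carefully), the fallback is a direct numerical verification of the sign of the derivative on a compact interval $[2, z_0]$, combined with the Stirling-based bound for $z \ge z_0$.
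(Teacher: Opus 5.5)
Your rewriting $(z-1)\,\Gamma(\tfrac{z-1}{2})=2\,\Gamma(\tfrac{z+1}{2})$ is correct, and so are the limit $\mathsf{K}_\Gamma(z)\to1$ and the Stirling bookkeeping. The monotonicity step, however, fails as written. The tangent-line inequality $\log\Gamma(\tfrac z2)-\log\Gamma(\tfrac{z+1}{2})\le-\tfrac12\psi(\tfrac z2)$ (with $\psi$ the digamma function) has the right direction. It expresses convexity of $\log\Gamma$, i.e.\ monotonicity of $\psi$, not concavity of $\psi$. Substituting it into your derivative gives
\[
(\log\mathsf{K}_\Gamma)'(z)\le-\log2-\frac1z+\frac{z-2}{2}\,\psi(\tfrac z2)-\frac z2\,\psi(\tfrac{z+1}{2})
=-\log2-\frac1z-\frac z2\Big[\psi(\tfrac{z+1}{2})-\psi(\tfrac z2)\Big]-\psi(\tfrac z2),
\]
because $\tfrac{z-1}{2}-\tfrac12=\tfrac{z-2}{2}$, not $\tfrac z2$. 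The extra term $-\psi(\tfrac z2)$ is positive for $z<2x_0\approx2.92$, where $x_0$ is the positive zero of $\psi$. So ``$\psi$ is increasing'' no longer closes the argument on $[2,2x_0[$. In fact the bound you display is false at $z=2$. There $(\log\mathsf{K}_\Gamma)'(2)=-\log2-\tfrac12+\log\tfrac{2}{\sqrt\pi}-\tfrac\gamma2-\psi(\tfrac32)\approx-1.40$, while your right-hand side equals $-\log2-\tfrac12-(\psi(\tfrac32)-\psi(1))\approx-1.81$.

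The repair is short.
\begin{itemize}
\item For $z\ge2$, use $\psi(\tfrac z2)\ge\psi(1)=-\gamma$ in the second form above. This gives $(\log\mathsf{K}_\Gamma)'(z)<\gamma-\log2<0$, which is all the statement needs.
\item If you want your stronger claim on $]1,\infty[$, take $1<z<2$ and use $-\psi(\tfrac{z+1}{2})<\gamma$ and $-\psi(\tfrac z2)<-\psi(\tfrac12)=\gamma+2\log2$ in the first form. This gives $(\log\mathsf{K}_\Gamma)'(z)<(1-z)\log2+\gamma-\tfrac1z<\gamma-\log2$, since $(2-z)\log2<2-z\le\tfrac1z$.
\end{itemize}
Corrected this way, your route is leaner than the paper's and yields strict decrease on all of $]1,\infty[$. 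It uses only log-convexity of $\Gamma$ and $\gamma<\log2$. The paper instead keeps the original form and bounds both digamma terms with the Gautschi-type inequalities $\ln z-\tfrac1z<\psi(z)<\ln z-\tfrac1{2z}$, integrating the upper one for the $\ln\Gamma$ difference. That only reaches $[2,\infty[$, and the paper treats $z\downarrow1$ separately by Taylor expansions of $\ln\Gamma$.

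Your value of $(\log\mathsf{K}_\Gamma)'(1)$ also contains a slip. The coefficient $\tfrac{z-1}{2}$ of $\psi(\tfrac z2)$ vanishes at $z=1$, so no $\tfrac12\psi(\tfrac12)$ term appears. The correct value is $-\log2-1+\tfrac12\log\pi+\tfrac\gamma2=\tfrac12(\gamma+\log\tfrac\pi4-2)\approx-0.83$, in agreement with the paper. It is still negative, so your local argument for $\mathsf{K}_\Gamma\uparrow1$ survives.
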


Since $\Gamma(1) = 1$ and $\Gamma\big{(}\tfrac{1}{2}\big{)} = \sqrt{\pi}$, the decreasing monotonicity of $\mathsf{K}_{\;\! \Gamma \:\!}$ entails the strict estimate
\[
\mathsf{K}_{\;\! \Gamma \:\!}(d) < \mathsf{K}_{\;\! \Gamma \:\!}(2) = \frac{1}{\pi} \approx 0.318 \:\! , \quad d > 2 \:\! .
\]
Furthermore, the limit~\eqref{eq:lim_1_K_d} reveals that Proposition~\ref{prop:volume-multivariate-expectiles-regions} may be regarded as a multivariate counterpart of the bound~\eqref{eq:lenght-expectiles-regions} established in Proposition~\ref{prop:lenght-expectiles-regions}, corresponding to the choice $m = \bE\:\!\bracks{ X }$.

\begin{remark}
\label{rem:volume-multivariate-expectiles-regions} 
A different route to bounding the volume of $E_{\:\!\! \bX}^{\;\! d}\;\!\!(\alpha)$ proceeds directly from~\eqref{eq:width} by applying the Cauchy--Schwarz inequality, which yields, for every $\bu \in \mathbb{S}^{\:\! d - 1 \:\!\!}$,
\[
\bE\:\!\bracks[\Big]{\:\! \abs[\big]{\:\! \bracket[\big]{\bX \:\!\! - \bE\:\!\bracks{ X } ,\:\!\! \bu\:\!} \:\!} \:\!} \:\! \le \;\! \bE\:\!\bracks[\Big]{\:\! \norm[\big]{\:\! \bX \:\!\! - \bE\:\!\bracks{ \bX } \:\!} \:\!} \:\! .
\]
Replacing the mean width by the maximal width (the diameter) of
$E_{\:\!\! \bX}^{\;\! d}\;\!\!(\alpha)$ and invoking the classical isodiametric inequality, one obtains the bound
\[
\Vol_{\;\! d}\bracks[\big]{ E_{\:\!\! \bX}^{\;\! d}\;\!\!(\alpha) }
\le
\frac{2\:\!\pi^{\:\! d/2 \:\!\!}}{d\:\!\Gamma\big{(} \frac{d}{2} \big{)}} \;\! \bigg{(} \frac{1 - 2\:\!\alpha}{\alpha} \bigg{)}^{\:\!\! d \:\!\!} \:\!\! \bE\:\!\bracks[\Big]{\:\! \norm[\big]{\:\! \bX \:\!\! - \bE\:\!\bracks{ \bX } \:\!} \:\!} ^ {d \:\!\!} \:\!\! .
\]

The constant $\mathsf{K}_{\;\! \Gamma \:\!}(d)$ appearing in
Proposition~\ref{prop:volume-multivariate-expectiles-regions}
(see~\eqref{eq:K_d}), however, is uniformly sharper for every $d\ge 2$ and also asymptotically superior. More precisely,
\begin{equation}
\label{eq:log-convex} 
\mathsf{K}_{\;\! \Gamma \:\!}(d) < \frac{1}{2^{\:\! d - 1 \:\!\!}} \:\! \frac{\pi^{\:\! d/2 \:\!\!}}{d\:\!\Gamma\big{(} \frac{d}{2} \big{)}} \;\! , \quad d \ge 2 \:\! ,
\end{equation}
showing that the mean-width approach systematically improves upon the diameter-based estimate across all dimensions greater than one.

To establish~\eqref{eq:log-convex}, it is convenient to rewrite it in the equivalent form
\begin{equation}
\label{eq:log-convex*} 
\Gamma\big{(} \tfrac{d}{2} \big{)} < \sqrt{\pi} \, \Gamma\big{(} \tfrac{d + 1}{2} \big{)} \:\! , \quad d \ge 2 \:\! ,
\end{equation}
(see also~\eqref{eq:gamma-function}).
For $d=2$ the verification is immediate, since $\Gamma\big{(} \tfrac{3}{2} \big{)} = \frac{\sqrt{\pi}}{2}$, and therefore~\eqref{eq:log-convex*} simply reduces to $2 < \pi$. In dimension $d \ge 3$,~\eqref{eq:log-convex*} follows from the (strict) logarithmic convexity of the Gamma function on $\mo{]} 0 \:\!,\:\!\! \infty \mc{[}$: for any $z \in \mo{]} 0 \:\!,\:\!\! \infty \mc{[}$ and $\lambda \in \mo{]} 0 \:\!,\:\!\! 1 \mc{]}$,
\[
\Gamma(\lambda\:\!z + 1/2 \:\!) \le \Gamma^{\;\!\lambda}(z + 1/2 \:\!) \;\! \Gamma^{\;\!1 - \lambda}(1/2\:\!) \equiv \Gamma^{\;\!\lambda}(z + 1/2 \:\!) \;\! \pi^{(1 - \lambda)/2} < \sqrt{\pi} \, \Gamma^{\;\!\lambda}(z + 1/2 \:\!) \;\! .
\]
Taking $z = \tfrac{d}{2}$ and $\lambda = 1 - 1/d$, and noting that $\Gamma\big{(} \tfrac{d + 1}{2} \big{)} > 1$ as $d \ge 3$, gives
\[
\Gamma\big{(} \tfrac{d}{2} \big{)} < \sqrt{\pi} \, \Gamma^{\;\!1 - 1/d\;\!\!}\big{(} \tfrac{d + 1}{2} \big{)} < \sqrt{\pi} \, \Gamma\big{(} \tfrac{d + 1}{2} \big{)} \:\! ,
\]
which is exactly~\eqref{eq:log-convex*}.

From a geometric perspective, this phenomenon is entirely natural: the diameter captures the maximal extent of a convex body, whereas the mean width corresponds to an average taken over all directions.
Inequality~\eqref{eq:log-convex} provides the analytic counterpart of this observation, and explains why the resulting volume estimate is unavoidably sharper.
\end{remark}

We conclude the present subsection by recording a brief but important observation regarding the possible passage from our scalar notions of risk and generalized deviation measures, introduced in~\eqref{eq:integrated-expectile} and~\eqref{eq:integrated-expectile-deviation} respectively (cf. Section~\ref{sec:risk-and-deviation-measures}), to the multivariate setting.

The key point is that no canonical scalar-valued notion of risk or deviation arises naturally in the multivariate framework, nor does there exist a vector-valued expectile capable of playing a genuinely analogous role while adhering to the spirit of the present approach.
Rather, the relevant objects in this context are inherently set-valued and convey directional and geometric information that cannot be faithfully reduced to a single number without incurring a substantial loss of structure.

Accordingly, multivariate expectile regions~\eqref{eq:multivariate-expectile-regions_def} themselves should be considered the primary carriers of risk and dispersion information.
This insight applies with particular force to deviation, as direct extensions of univariate integrated expectile deviations do not admit meaningful multivariate representations but instead lead to fundamentally different, set-valued objects.

Scalar quantities obtained by integration, such as volume-based functionals or related geometric summaries, can nevertheless be defined in a coherent and systematic manner.
These should therefore be interpreted as deviation \emph{indices} derived from the underlying geometric objects, rather than as primitive deviation measures; see Section~\ref{subsec:multivariate-inequality-indices} below.
In this sense, quantitative comparison and ranking remain possible, while the richer geometric structure intrinsic to multivariate deviation is preserved.


\subsection{Multivariate expectile star-shaped set}
\label{subsec:multivariate-expectile-set}

The analysis of multivariate expectile regions naturally leads to the study of a unified geometric object encoding their behavior across all confidence levels $\alpha \in \mo{]} 0 \:\!,\:\!\! 1/2 \mc{]}$.
In analogy with the univariate case, and consistently with the multivariate deviation bounds provided in Proposition~\ref{prop:volume-multivariate-expectiles-regions}, we introduce a star-shaped set that aggregates the entire family of expectile regions into a single $(d + 1)$\:\!-\:\!dimensional construction.

For any $\bX \in \Lped{1}{\bP}{d}$, we define the \emph{expectile star-shaped set} of $\bX$ as
\begin{equation}
\label{eq:multivariate-expectile-starshaped-set} 
\cE_{\:\!\! \bX}^{\:\! d} \eqdef \:\! \cl \:\! \Set{\:\!\! {\bracks{t , \bx}}^{\top \:\!\!} \! \in \R^{\;\!\! 1 + d \:\!\!} \:\!\ \Big{|} \:\!\ t \in \mo{]} 0 \:\!,\:\!\! 1/2 \mc{]} \:\! , \ \bx \in t \:\! E_{\:\!\! \bX}^{\;\! d}(t) \:\!\!} \:\!\! .
\end{equation}
This set may be interpreted as a Minkowski-type product, in which each section at level $t$ corresponds to a scaled multivariate expectile region $t \:\! E_{\:\!\! \bX}^{\;\! d}(t)$; in this sense, it conveys the evolution of expectile regions as the level parameter varies, and furnishes a compact geometric summary of their shape and dispersion.

Since each region $E_{\:\!\! \bX}^{\;\! d}(t)$ is determined by directional expectiles, the star-shaped set $\cE_{\:\!\! \bX}^{\:\! d}$ simultaneously encodes the behavior of all one-dimensional projections across all levels.
As a consequence, it furnishes a global geometric representation of multivariate dispersion and asymmetry, while naturally extending the intuition developed in the scalar case.

A key structural property, inherited directly from the geometric characterization~\eqref{eq:cx-multivariate-expectiles-regions} of multivariate expectile regions, is the following equivalence: for any $\bY \in \Lped{1}{\bP}{d}$,
\begin{equation*}
\bX \le_{\;\!\text{lcx}} \;\!\! \bY \;\!\!
\quad \Longleftrightarrow \quad
\cE_{\:\!\! \bX}^{\:\! d} \subseteq \cE_{\bY}^{\:\! d} \:\! .
\end{equation*}

Thus, the family ${ \cE_{\:\!\! \bX}^{\:\! d} }$ preserves the lower convex order structure, and each star-shaped set may be regarded as a measurable collection of sections parameterized by $t$. This property ensures that the Fubini\:\!--Tonelli theorem applies directly to the computation of volumes.

Specifically, exploiting the special radial structure of the star-shaped set~\eqref{eq:multivariate-expectile-starshaped-set}, applying the Fubini\:\!--Tonelli theorem, and performing the change of variables $\bx = \alpha \:\! \by$, we obtain
\begin{equation}
\label{eq:volume-multivariate-expectile-starshaped-set_0} 
\Vol_{\;\! d+1} \bracks[\big]{ \cE_{\:\!\! \bX}^{\:\! d} }
= \;\!\!
\int_{0}^{1/2} \:\!\! \Vol_{\;\! d}\bracks[\big]{ \alpha \:\! E_{\:\!\! \bX}^{\;\! d}\;\!\!(\alpha) } \;\! d\alpha
= \;\!\!
\int_{0}^{1/2} \:\!\! \alpha^d \;\!\! \Vol_{\;\! d}\bracks[\big]{ E_{\:\!\! \bX}^{\;\! d}\;\!\!(\alpha) } \;\! d\alpha \;\! .
\end{equation}

Combining identity~\eqref{eq:volume-multivariate-expectile-starshaped-set_0} with Proposition~\ref{prop:volume-multivariate-expectiles-regions} yields an explicit upper bound on the $(d + 1)$\:\!-\:\!dimensional volume of the expectile star-shaped set.
The resulting estimate, given below, provides a global deviation control encompassing all levels $\alpha$ simultaneously.

\begin{corollary}
\label{cor:volume-multivariate-expectile-starshaped-set} 
Let $\bX \in \Lped{1}{\bP}{d}$. Then
\begin{equation}
\label{eq:area-expectile-starshaped-set_bound_multivariate} 
\Vol_{\;\! d+1} \bracks[\big]{ \cE_{\:\!\! \bX}^{\:\! d} }
\:\! \le
\frac{\Gamma^{\;\! d-1}\big{(}\frac{d}{2}\big{)}}{d \:\! (d - 1)^{d \:\!\!} \:\! (d + 1) \:\! \Gamma^{\;\! d}\big{(}\frac{d - 1}{2}\big{)}} \;\! \bE\:\!\bracks[\Big]{\:\! \norm[\big]{\:\! \bX \:\!\! - \bE\:\!\bracks{ \bX } \:\!} \:\!} ^ {d \:\!\!} \:\!\! .
\end{equation}
\end{corollary}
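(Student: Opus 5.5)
The plan is to combine the slice formula \eqref{eq:volume-multivariate-expectile-starshaped-set_0} with the volume bound of Proposition~\ref{prop:volume-multivariate-expectiles-regions}, and then evaluate a one-dimensional integral in $\alpha$.

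First, by \eqref{eq:volume-multivariate-expectile-starshaped-set_0},
\[
\Vol_{\;\! d+1} \bracks[\big]{ \cE_{\:\!\! \bX}^{\:\! d} }
= \int_{0}^{1/2} \alpha^d \, \Vol_{\;\! d}\bracks[\big]{ E_{\:\!\! \bX}^{\;\! d}(\alpha) } \, d\alpha .
\]
Inserting \eqref{eq:volume-multivariate-expectiles-regions} pointwise in $\alpha$ is legitimate by monotonicity of the integral, since all integrands are nonnegative and measurable. The factor $\alpha^d$ then cancels the $\alpha^{-d}$ coming from $\big((1-2\alpha)/\alpha\big)^d$, which gives
\[
\Vol_{\;\! d+1} \bracks[\big]{ \cE_{\:\!\! \bX}^{\:\! d} }
\le \mathsf{K}_{\;\! \Gamma \:\!}(d) \, \bE\:\!\bracks[\Big]{\:\! \norm[\big]{\:\! \bX - \bE\:\!\bracks{ \bX } \:\!} \:\!}^{d} \int_{0}^{1/2} (1-2\alpha)^d \, d\alpha ,
\]
with $\mathsf{K}_{\;\! \Gamma \:\!}(d)$ as in \eqref{eq:K_d}.

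Second, the substitution $s = 1-2\alpha$ evaluates the remaining integral as
\[
\int_{0}^{1/2} (1-2\alpha)^d \, d\alpha = \frac{1}{2}\int_0^1 s^d \, ds = \frac{1}{2(d+1)} .
\]
Multiplying by $\mathsf{K}_{\;\! \Gamma \:\!}(d)$, the factor $2$ in its numerator cancels and leaves exactly the constant
\[
\frac{\Gamma^{\;\! d-1}\big(\frac{d}{2}\big)}{d\,(d-1)^d\,(d+1)\,\Gamma^{\;\! d}\big(\frac{d-1}{2}\big)} ,
\]
which proves \eqref{eq:area-expectile-starshaped-set_bound_multivariate}.

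The computation itself is routine. The one point needing care is justifying the slice formula \eqref{eq:volume-multivariate-expectile-starshaped-set_0}. The closure in \eqref{eq:multivariate-expectile-starshaped-set} adds only a boundary of Lebesgue measure zero: each section $\alpha E_{\:\!\! \bX}^{\;\! d}(\alpha)$ is closed and convex, and $\alpha \mapsto E_{\:\!\! \bX}^{\;\! d}(\alpha)$ is monotone by \eqref{eq:multivariate-expectile-regions}. Since the paper has already recorded this formula, I will take it as given. If the law of $\bX$ lies on a proper affine subspace, both sides degenerate consistently, with left side zero, so no separate case is needed. Proposition~\ref{prop:volume-multivariate-expectiles-regions} requires $d \ge 2$ through Urysohn's inequality, so the corollary is read for $d \ge 2$, with $d=1$ covered by \eqref{eq:area-expectile-starshaped-set_bound}.
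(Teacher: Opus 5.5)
Your proof is correct and is the paper's argument: insert the bound of Proposition~\ref{prop:volume-multivariate-expectiles-regions} into the slice formula \eqref{eq:volume-multivariate-expectile-starshaped-set_0}, cancel $\alpha^d$, and use $\int_0^{1/2}(1-2\alpha)^d\,d\alpha = 1/(2(d+1))$, which is exactly the relation $\widetilde{\mathsf{K}}_{\Gamma}(d) = \mathsf{K}_{\Gamma}(d)/(2(d+1))$ recorded in \eqref{eq:tildeK_d}. On the closure, the precise reason it adds only a null set is continuity of $\alpha \mapsto e_{\langle \bX,\bu\rangle}(1-\alpha)$. This makes the set closed over $0<t\le 1/2$, so only the slice $t=0$ is added. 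Your restriction to $d\ge 2$ is also correct.
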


To further analyze this estimate, it is natural to isolate the purely dimensional factor
\begin{equation}
\label{eq:tildeK_d} 
\widetilde{\mathsf{K}}_{\;\! \Gamma \:\!}(d)
\coloneqq
\frac{\mathsf{K}_{\;\! \Gamma \:\!}(d)}{2\:\!(d + 1)}
\equiv
\frac{\Gamma^{\;\! d-1}\big{(}\frac{d}{2}\big{)}}{d \:\! (d - 1)^{d \:\!\!} \:\! (d + 1) \:\! \Gamma^{\;\! d}\big{(}\frac{d - 1}{2}\big{)}} 
\end{equation}
(see~\eqref{eq:K_d}).
As in the previous subsection, understanding the behavior of $\widetilde{\mathsf{K}}_{\;\! \Gamma \:\!}(d)$ as the dimension $d$ grows is of intrinsic geometric interest: it quantifies the dimension-driven contraction of multivariate expectile star-shaped regions, independently of the distributional term.
In view of~\eqref{eq:tildeK_d}, the next proposition follows directly from Proposition~\ref{prop:K_d}, and no additional proof is required; see Appendix~\ref{sec:appendix_proof-of-K_d}. 

\begin{proposition}
\label{prop:tildeK_d} 
Let $\widetilde{\mathsf{K}}_{\;\! \Gamma} \colon \mo{]} 1 ,\:\!\! \infty \mc{[} \to \mo{]} 0 \:\!,\:\!\! \infty \mc{[}$ be the mapping defined by
\[
\widetilde{\mathsf{K}}_{\;\! \Gamma \:\!}(z) \coloneqq \frac{\Gamma^{\;\! z - 1}\big{(}\frac{z}{2}\big{)}}{z \:\! (z - 1)^{z \:\!\!} \:\! (z + 1) \:\! \Gamma^{\;\! z}\big{(}\frac{z - 1}{2}\big{)}} \;\! , \quad z \in \mo{]} 1 ,\:\!\! \infty \mc{[} \:\! . 
\]
Then $\widetilde{\mathsf{K}}_{\;\! \Gamma}$ strictly decreases in $\mo{[} 2 ,\:\!\! \infty \mc{[}$. Moreover, as $z \to \infty$, it decays to zero at a super-exponential rate; more precisely,
\begin{equation*}
\widetilde{\mathsf{K}}_{\;\! \Gamma \:\!}(z)
=
\cO\Big{(} e^{\:\! z/2} \;\! z^{\;\! -(z \:\! + \:\!  3/2)\:\!\!} \Big{)} \:\! , \quad z \to \infty \:\! .
\end{equation*}
In the opposite direction,
\begin{equation}
\label{eq:lim_1_tildeK_d} 
\widetilde{\mathsf{K}}_{\;\! \Gamma \:\!}(z) \uparrow \frac{1}{4} \quad \text{as} \quad z \downarrow 1 \;\! .
\end{equation}
\end{proposition}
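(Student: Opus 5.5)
The plan is to reduce everything to Proposition~\ref{prop:K_d} through the identity $\widetilde{\mathsf{K}}_{\;\! \Gamma \:\!}(z) = \mathsf{K}_{\;\! \Gamma \:\!}(z)/\bigl(2\:\!(z+1)\bigr)$. This identity holds for every real $z \in \mo{]} 1 ,\:\!\! \infty \mc{[}$ and not only for integers. To check it, compare the defining formula of $\widetilde{\mathsf{K}}_{\;\! \Gamma}$ in the statement with~\eqref{eq:K_z}: the extra factor $1/(z+1)$ appears, and the numerator constant $2$ is absent. Each of the three claims will then be obtained by combining the corresponding property of $\mathsf{K}_{\;\! \Gamma}$ with an elementary property of the positive factor $1/\bigl(2\:\!(z+1)\bigr)$.

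\textbf{Monotonicity.} On $\mo{[} 2 ,\:\!\! \infty \mc{[}$, the function $\mathsf{K}_{\;\! \Gamma}$ is positive and strictly decreasing by Proposition~\ref{prop:K_d}. The factor $z \mapsto 1/\bigl(2\:\!(z+1)\bigr)$ is positive and strictly decreasing as well. The product of two positive, strictly decreasing functions is strictly decreasing. Concretely, for $2 \le z_1 < z_2$ we have
\[
\mathsf{K}_{\;\! \Gamma}(z_2)/(z_2+1) < \mathsf{K}_{\;\! \Gamma}(z_1)/(z_2+1) < \mathsf{K}_{\;\! \Gamma}(z_1)/(z_1+1) \:\! .
\]

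\textbf{Asymptotics.} By~\eqref{eq:lim_infty_K_d}, there exist $C>0$ and $z_0$ such that $\mathsf{K}_{\;\! \Gamma}(z) \le C\, e^{z/2} z^{-(z+1/2)}$ for all $z \ge z_0$. Since $1/\bigl(2\:\!(z+1)\bigr) \le 1/(2z)$, it follows that
\[
\widetilde{\mathsf{K}}_{\;\! \Gamma}(z) \le \tfrac{C}{2}\, e^{z/2} z^{-(z+3/2)} \:\! ,
\]
which is the claimed $\cO$-bound.

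\textbf{Limit at $1$.} By~\eqref{eq:lim_1_K_d}, $\mathsf{K}_{\;\! \Gamma}(z) \to 1$ as $z \downarrow 1$. Since $1/\bigl(2\:\!(z+1)\bigr) \to 1/4$, the product tends to $1/4$.

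The one delicate point, and the step I expect to be the main obstacle, is the monotone character of this approach, i.e.\ the arrow ``$\uparrow$'' in~\eqref{eq:lim_1_tildeK_d}. Near $z=1$, the function $\mathsf{K}_{\;\! \Gamma}$ increases toward $1$ as $z \downarrow 1$, by the ``$\uparrow$'' in~\eqref{eq:lim_1_K_d}. On the same side, $1/\bigl(2\:\!(z+1)\bigr)$ increases toward $1/4$. So on a right neighbourhood of $1$ both positive factors are decreasing in $z$. Their product is therefore decreasing in $z$ there, which means it increases to $1/4$ as $z \downarrow 1$.

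This argument depends on how the ``$\uparrow$'' in~\eqref{eq:lim_1_K_d} is to be read. I will read it as saying that $\mathsf{K}_{\;\! \Gamma}$ is decreasing on some interval $\mo{]} 1 , 1+\epsilon \mc{[}$, and that is exactly what the argument uses. If Proposition~\ref{prop:K_d} instead only provides the limit value, I would go back to its proof in Appendix~\ref{sec:appendix_proof-of-K_d}. There I would extract the sign of the derivative of $\log \mathsf{K}_{\;\! \Gamma}$ near $z=1$; this derivative involves digamma terms and the term $-z/(z-1)$ coming from $\log (z-1)^z$, and the latter dominates. Adding the strictly negative quantity $-1/(z+1)$ to that derivative then gives the same conclusion for $\widetilde{\mathsf{K}}_{\;\! \Gamma}$.
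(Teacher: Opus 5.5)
Your proposal is correct and matches the paper's argument, which also derives Proposition~\ref{prop:tildeK_d} directly from Proposition~\ref{prop:K_d} via $\widetilde{\mathsf{K}}_{\Gamma}(z) = \mathsf{K}_{\Gamma}(z)/\bigl(2(z+1)\bigr)$; the ``$\uparrow$'' you rely on is proved in Appendix~\ref{sec:appendix_proof-of-K_d}, which shows that $\ln \mathsf{K}_{\Gamma}$ is strictly decreasing on a right neighbourhood of $1$, so your main reading is enough. One caution about your fallback: near $z = 1$ the term $-z/(z-1)$ does not dominate, because $-\frac{z}{2}\psi^{(0)}\bigl(\frac{z-1}{2}\bigr) = \frac{z}{z-1} + \frac{\gamma z}{2} + O(z-1)$ cancels it exactly (as $-\ln\Gamma\bigl(\frac{z-1}{2}\bigr)$ cancels $-\ln(z-1)$), and the negative sign comes instead from the finite limit $\frac{1}{2}\bigl(\gamma + \ln\frac{\pi}{4} - 2\bigr)$ of the derivative.
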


The strict decrease of $\widetilde{\mathsf{K}}_{\;\! \Gamma \:\!}$ leads to the bound
\[
\widetilde{\mathsf{K}}_{\;\! \Gamma \:\!}(d) < \widetilde{\mathsf{K}}_{\;\! \Gamma \:\!}(2) = \frac{1}{6\:\!\pi} \approx 0.053 \:\! , \quad d > 2 \:\! .
\]
In addition, the limit~\eqref{eq:lim_1_tildeK_d} makes clear that the multiplicative constant $1/4$ occurring in~\eqref{eq:area-expectile-starshaped-set_bound} is recovered as the natural degenerate limit of its multivariate analogue~\eqref{eq:area-expectile-starshaped-set_bound_multivariate}.

In the nonnegative case, Proposition~\ref{prop:ball-rectangle} offers a different dimension-dependent upper bound on the volume of the expectile body $\cE_{\:\!\! \bX}^{\:\! d}$, as shown in the next result.

\begin{corollary}
\label{cor:ball-rectangle_cor} 
Let $\bX \in \Lped{1}{\bP}{d,+ \:\!\!}$. Then
\[
\Vol_{\;\! d+1} \bracks[\big]{ \cE_{\:\!\! \bX}^{\:\! d} }
\:\! \le \:\!
\braces*{\:\! \frac{1}{2} + d\:\!2^{\:\! d - 1} \bracks[\Bigg]{\;\! \sum_{k \:\! = \:\! 1}^d \frac{1}{k\:\!2^{\:\! k \:\!\!} } - \ln 2 \;\!} \:\!}
\prod_{i = 1}^d \bE\:\!\bracks{ X_{\:\! i} } \:\! .
\]
\end{corollary}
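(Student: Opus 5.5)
The plan is to combine the slice representation~\eqref{eq:volume-multivariate-expectile-starshaped-set_0} with the hyper-rectangle bound of Proposition~\ref{cor:ball-rectangle}, and then evaluate the resulting one-dimensional integral in closed form. Since $\bX$ is componentwise nonnegative, Proposition~\ref{cor:ball-rectangle} applies at every level $\alpha \in \mo{]} 0 \:\!,\:\!\! 1/2 \mc{]}$. Inserting it into~\eqref{eq:volume-multivariate-expectile-starshaped-set_0}, the factor $\alpha^d$ cancels the $\alpha^{-d}$ coming from the bound, and we obtain
\[
\Vol_{\;\! d+1} \bracks[\big]{ \cE_{\:\!\! \bX}^{\:\! d} }
\le
\bigg( \int_{0}^{1/2} \bigg( \frac{1 - 2\:\!\alpha}{1 - \alpha} \bigg)^{\!d} d\alpha \bigg) \prod_{i = 1}^d \bE\:\!\bracks{ X_{\:\! i} } \:\! .
\]
It therefore remains to show that this integral, call it $J_d$, equals the bracketed constant in the statement.

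To compute $J_d$, I will substitute $u = (1 - 2\alpha)/(1 - \alpha)$. Then $\alpha = (1 - u)/(2 - u)$ and $d\alpha = -\,du/(2 - u)^2$, and $u$ runs from $1$ to $0$ as $\alpha$ runs from $0$ to $1/2$. This gives
\[
J_d = \int_0^1 \frac{u^d}{(2 - u)^2}\, du \:\! .
\]
Integrating by parts, with $u^d$ differentiated and $(2 - u)^{-2}$ integrated to $(2 - u)^{-1}$, yields
\[
J_d = 1 - d \int_0^1 \frac{u^{d-1}}{2 - u}\, du \:\! .
\]
Next, the rescaling $u = 2v$ turns the remaining integral into $2^{d-1} \int_0^{1/2} v^{d-1} (1 - v)^{-1}\, dv$.

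For this last integral I will expand $(1 - v)^{-1}$ as a geometric series, which converges uniformly on $\mo{[} 0 \:\!,\:\!\! 1/2 \mc{]}$, and integrate term by term. This gives
\[
\int_0^{1/2} \frac{v^{d-1}}{1 - v}\, dv = \sum_{n \ge d} \frac{1}{n\, 2^n} = \ln 2 - \sum_{k=1}^{d-1} \frac{1}{k\, 2^k} \:\! ,
\]
using $\sum_{n \ge 1} 1/(n 2^n) = \ln 2$. Hence
\[
J_d = 1 - d\, 2^{d-1} \bigg( \ln 2 - \sum_{k=1}^{d-1} \frac{1}{k\, 2^k} \bigg) \:\! .
\]
Adding and subtracting the $k = d$ term, whose contribution is $d\, 2^{d-1}/(d\, 2^d) = 1/2$, rewrites this as
\[
J_d = \frac12 + d\, 2^{d-1} \bigg( \sum_{k=1}^{d} \frac{1}{k\, 2^k} - \ln 2 \bigg) \:\! ,
\]
which is exactly the claimed constant. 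As a consistency check, for $d = 1$ this gives $1 - \ln 2$, matching~\eqref{eq:area-expectile-starshaped-set_nor-bound}.

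The inequality step is immediate. The only real work, and the step I expect to be the main obstacle, is the bookkeeping in this closed-form evaluation. The delicate points are the boundary terms in the integration by parts, which are $1$ at $u = 1$ and $0$ at $u = 0$, and the index shift that moves the sum up to $k = d$.
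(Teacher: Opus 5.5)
Your proposal is correct and follows the paper's proof: insert the hyper-rectangle bound of Proposition~\ref{cor:ball-rectangle} into the slice formula~\eqref{eq:volume-multivariate-expectile-starshaped-set_0}, then evaluate $\int_0^{1/2}\bigl((1-2\alpha)/(1-\alpha)\bigr)^d\,d\alpha$ in closed form using $\sum_{k\ge 1} 1/(k\,2^k)=\ln 2$. The only difference is in the bookkeeping: your variable $u$ is twice the paper's $t$, and your integration by parts followed by a geometric series replaces the paper's expansion of $(1-t)^{-2}$ together with the splitting $n/(d+n)=1-d/(d+n)$.
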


\begin{proof}
Combining~\eqref{eq:volume-multivariate-expectile-starshaped-set_0} with~\eqref{eq:volume-multivariate-expectiles-regions_ball-rectangle} yields
\begin{equation}
\label{eq:integral_0} 
\Vol_{\;\! d+1} \bracks[\big]{ \cE_{\:\!\! \bX}^{\:\! d} }
\:\! \le \:\!
\prod_{i = 1}^d \bE\:\!\bracks{ X_{\:\! i} } \:\! \cdot \:\!\! \int_{0}^{1/2} { \bigg{(}\;\!\! \frac{1 - 2\:\!\alpha}{1 - \alpha} \;\!\!} \bigg{)}^{\:\!\! d} \:\!\! d\alpha \:\! .
\end{equation}
We next prove the closed-form expression
\begin{equation}
\label{eq:integral} 
\int_{0}^{1/2} { \bigg{(}\;\!\! \frac{1 - 2\:\!\alpha}{1 - \alpha} \;\!\!} \bigg{)}^{\:\!\! d} \:\!\! d\alpha
=
\frac{1}{2} + d\:\!2^{\:\! d - 1} \bracks[\Bigg]{\;\! \sum_{k \:\! = \:\! 1}^d \frac{1}{k\:\!2^{\:\! k \:\!\!} } - \ln 2 \;\!} \:\! .
\end{equation}
Once the identity~\eqref{eq:integral} is established, the conclusion of the corollary follows immediately from~\eqref{eq:integral_0}.

To verify~\eqref{eq:integral}, observe first that a direct computation gives
\begin{equation}
\label{eq:integral_1} 
\int_{0}^{1/2} { \bigg{(}\;\!\! \frac{1 - 2\:\!\alpha}{1 - \alpha} \;\!\!} \bigg{)}^{\:\!\! d} \:\!\! d\alpha
=
2^{\:\! d - 1 \:\!\!} \:\!\! \int_{0}^{1} \:\!\! { \bigg{(}\;\!\! \frac{s}{1 + s} \;\!\!} \bigg{)}^{\:\!\! d} \:\!\! ds
=
2^{\:\! d - 1 \:\!\!} \:\!\! \int_{0}^{1/2} \:\!\! \frac{t^{\:\! d}}{(1 - t)^{2 \:\!\!}} \, dt \:\! ,
\end{equation}
via the successive substitutions $s = 1 - 2\:\!\alpha$ and $t = s/(1 + s)$. Expanding $(1 - t)^{2 \:\!\!}$ into its Taylor series on $\mo{[} 0 \:\!,\:\!\! 1/2 \mc{]}$ (in contrast with the far less convenient expansion of $(1 + s)^{d \:\!\!}$ on $\mo{[} 0 \:\!,\:\!\! 1 \;\!\!\mc{[}$ in the previous representation), and invoking termwise integration (justified by monotone convergence), we derive
\begin{equation}
\label{eq:integral_2} 
2^{\:\! d - 1 \:\!\!} \:\!\! \int_{0}^{1/2} \:\!\! \frac{t^{\:\! d}}{(1 - t)^{2 \:\!\!}} \, dt
=
\frac{1}{2} \:\! \sum_{n = 1}^\infty \frac{n}{(d + n)\:\!2^{\:\! n \:\!\!}} \:\! .
\end{equation}
Writing $\frac{n}{d + n} = 1 - \frac{d}{d + n}$ (which separates the contribution of $n$ in the numerator from the term $d + n$ in the denominator within the series), and using the geometric series with ratio $1/2$, we obtain
\begin{equation}
\label{eq:integral_3} 
\sum_{n = 1}^\infty \frac{n}{(d + n)\:\!2^{\:\! n \:\!\!}}
=
1 - d \:\! \sum_{n = 1}^\infty \frac{1}{(d + n)\:\!2^{\:\! n \:\!\!}}
\equiv
1 - d\:\!2^{\:\! d \:\!\!} \! \sum_{k = d + 1}^\infty \frac{1}{k\:\!2^{\:\! k \:\!\!}} \:\! .
\end{equation}
Finally, by the well-known Taylor expansion of the function $x \mapsto - \ln(1 - x)$ evaluated at $x = 1/2$, a straightforward rearrangement gives
\begin{equation}
\label{eq:integral_4} 
\sum_{k = d + 1}^\infty \frac{1}{k\:\!2^{\:\! k \:\!\!}}
\equiv
\sum_{k \:\! = \:\! 1}^\infty\frac{1}{k\:\!2^{\:\! k \:\!\!}} - \sum_{k \:\! = \:\! 1}^d \frac{1}{k\:\!2^{\:\! k \:\!\!}}
=
\ln 2 - \sum_{k \:\! = \:\! 1}^d \frac{1}{k\:\!2^{\:\! k \:\!\!}} \:\! .
\end{equation}
Gathering identities~\eqref{eq:integral_1}, \eqref{eq:integral_2}, \eqref{eq:integral_3}, and~\eqref{eq:integral_4}, and substituting into~\eqref{eq:integral}, we reach the desired closed form, thereby establishing the result. \qedhere
\end{proof}

As a particular instance of Corollary~\ref{cor:ball-rectangle_cor}, when $\bX \in \Lped{1}{\bP}{\:\! 2,+}$ (that is, for $d = 2$),
\[
\Vol_{\;\! 3} \bracks[\big]{ \cE_{\:\!\! \bX}^{\:\! 2} }
\:\! \le \:\!
\big{(}\:\! 3 - 4 \ln 2 \:\!\big{)} \bE\:\!\bracks{ X_1 } \:\!\! \bE\:\!\bracks{ X_{\:\! 2} }
\approx
0.227 \cdot \bE\:\!\bracks{ X_1 } \:\!\! \bE\:\!\bracks{ X_{\:\! 2} } \:\! .
\]

Once again, upper bounds formulated in terms of means rather than mean absolute deviations, prove particularly effective whenever the underlying random vector is markedly sparse.

An explicit radial representation of the volume of $\cE_{\bX}^{\, d}$, closely paralleling Proposition~\ref{prop:area-expectile-starshaped-set_rho} in the univariate case, is provided below.

\begin{proposition}
\label{prop:volume-multivariate-expectile-starshaped-set} 
Let $\bX \in \Lped{1}{\bP}{d}$. Then
\begin{equation*}
\Vol_{\;\! d+1} \bracks[\big]{ \cE_{\:\!\! \bX}^{\:\! d} }
=
\frac{1}{d + 1} \:\! \int\nolimits_{\:\! \R^{d \:\!\!}} \bigg{(} \inf_{\bu \:\!  \in \;\! \mathbb{S}^{\:\! d - 1 \:\!\!}} \bm{\rho}_{\:\! \cE_{\:\! \bracket{\bX\:\!\!,\bu}}}\:\!\!\big{(}1 , \bracket{\bx\:\!,\:\!\!\bu}\big{)} \bigg{)}^{\;\!\! d + 1 \:\!\!} \:\!\! d\bx \:\! ,
\end{equation*}
where $\bm{\rho}$ is given in~\eqref{eq:radial-function}.
\end{proposition}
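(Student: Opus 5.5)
The plan mirrors the proof of Proposition~\ref{prop:area-expectile-starshaped-set_rho}. We start from the Fubini--Tonelli representation~\eqref{eq:volume-multivariate-expectile-starshaped-set_0} and rewrite the volume of each section as an integral of an indicator:
\[
\Vol_{\;\! d+1} \bracks[\big]{ \cE_{\:\!\! \bX}^{\:\! d} }
= \int_{0}^{1/2} \alpha^d \int_{\R^d} \1_{E_{\bX}^{d}(\alpha)}(\by) \, d\by \, d\alpha
= \int_{\R^d} \bigg( \int_{0}^{1/2} \alpha^d \, \1_{E_{\bX}^{d}(\alpha)}(\by) \, d\alpha \bigg) d\by \:\! ,
\]
where the exchange of integrals is justified by Tonelli, since the integrand is nonnegative. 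Joint measurability of $(\alpha,\by)\mapsto \1_{E_{\bX}^d(\alpha)}(\by)$ will follow from monotonicity in $\alpha$ together with closedness of each section. Note that this route uses the unscaled sections $E_{\bX}^d(\alpha)$ after the change of variables already performed in~\eqref{eq:volume-multivariate-expectile-starshaped-set_0}, so the integration variable in the final formula is $\by$, renamed $\bx$.

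The key step is to identify, for fixed $\by\in\R^d$, the set of levels $\alpha\in\mo{]}0,1/2\mc{]}$ with $\by\in E_{\bX}^d(\alpha)$. By the definition~\eqref{eq:multivariate-expectile-regions_def}, $\by\in E_{\bX}^d(\alpha)$ holds iff $\bracket{\by,\bu}\in E_{\bracket{\bX,\bu}}(\alpha)$ for every $\bu\in\mathbb{S}^{d-1}$. For a fixed scalar variable $Y=\bracket{\bX,\bu}$ and point $x=\bracket{\by,\bu}$, the set $\{\alpha\in\mo{]}0,1/2\mc{]} : x\in E_Y(\alpha)\}$ is an interval of the form $\mo{]}0,r\mc{]}$ (possibly empty or with open right end). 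Here $r=\bm{\rho}_{\cE_Y}(1,x)=\min\{e_Y^{-1}(x),1-e_Y^{-1}(x)\}$, by~\eqref{eq:radial-function} together with the fact that the regions $E_Y(\alpha)$ are nested and decreasing in $\alpha$, as recalled in Section~\ref{subsec:expectile-regions}.

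Intersecting over $\bu$ then gives, up to at most one endpoint, the interval $\mo{]}0,\rho(\by)\mc{]}$, where $\rho(\by)\coloneqq\inf_{\bu}\bm{\rho}_{\cE_{\bracket{\bX,\bu}}}(1,\bracket{\by,\bu})$. The inner integral therefore equals $\int_0^{\rho(\by)}\alpha^d\,d\alpha=\rho(\by)^{d+1}/(d+1)$, and substituting this back yields the claim.

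I expect the main obstacle to be the endpoint issue in the one-dimensional step. One must check that $x\in E_Y(\alpha)$ holds exactly when $\alpha\le\bm{\rho}_{\cE_Y}(1,x)$. This uses that $e_Y$ is continuous and strictly increasing, so $e_Y(\alpha)\le x$ iff $\alpha\le e_Y^{-1}(x)$, and $x\le e_Y(1-\alpha)$ iff $\alpha\le 1-e_Y^{-1}(x)$. Degenerate projections (with $\essinf Y=\esssup Y$) and the boundary values $0$ and $1$ of $e_Y^{-1}$ need a brief separate check. For the intersection over $\bu$, a closed condition $\alpha\le r_{\bu}$ for all $\bu$ is equivalent to $\alpha\le\inf_{\bu} r_{\bu}$, so no endpoint is lost. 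In any case, an endpoint is a Lebesgue-null set in $\alpha$ and would not affect the integral. Measurability of $\rho$ follows because $\{\rho\ge a\}=E_{\bX}^d(a)$ is closed, so $\rho$ is upper semicontinuous.
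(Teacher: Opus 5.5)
Your proposal is correct and follows the paper's proof step for step. You exchange the integrals in \eqref{eq:volume-multivariate-expectile-starshaped-set_0} via Fubini--Tonelli, evaluate the inner integral as $\rho(\bx)^{d+1}/(d+1)$ with $\rho(\bx)=\sup\{\alpha\in\,]0,1/2] : \bx\in E^{d}_{\bX}(\alpha)\}$, and identify this supremum with the infimum over $\bu\in\mathbb{S}^{d-1}$ of the univariate radial functions; your checks on endpoints, degenerate projections and measurability (via $\{\rho\ge a\}=E^{d}_{\bX}(a)$) supply details the paper leaves implicit.
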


\begin{proof}
From~\eqref{eq:volume-multivariate-expectile-starshaped-set_0} and Fubini\:\!--Tonelli theorem, we have
\[
\begin{split}
\Vol_{\;\! d+1} \bracks[\big]{ \cE_{\:\!\! \bX}^{\:\! d} } &= \int_{0}^{1/2} \:\!\! \alpha^{d \;\!\!} \Vol_{\;\! d}\bracks[\big]{ E_{\:\!\! \bX}^{\;\! d}\;\!\!(\alpha) } \;\! d\alpha \\
&= \int\nolimits_{\:\! \R^{d \:\!\!}} \:\!\ \bracks[\Bigg]{\;\! \int_{0}^{1/2} \:\!\! \alpha^d \1_{E_{\:\!\! \bX}^{\;\! d}\;\!\!(\alpha)} (\bx) \, d\alpha \:\! } \:\! d\bx \\[1ex]
&= \frac{1}{d + 1} \:\! \int\nolimits_{\:\! \R^{d \:\!\!}} \bigg{(}\:\!\! \sup \Set{\:\!\! \alpha \in \mo{]} 0 \:\!,\:\!\! 1/2 \mc{]} \:\!\ \Big{|} \:\!\ \bx \in E_{\:\!\! \bX}^{\;\! d}\;\!\!(\alpha) \!} \:\!\!\bigg{)}^{\;\!\! d + 1 \:\!\!} \:\!\! d\bx \;\! .
\end{split}
\]
Now, using the projection representation of the multivariate expectile regions,
\[
\sup \Set{\:\!\! \alpha \in \mo{]} 0 \:\!,\:\!\! 1/2 \mc{]} \:\!\ \Big{|} \:\!\ \bx \in E_{\:\!\! \bX}^{\;\! d}\;\!\!(\alpha) \!}
\:\! = \:\!\!
\inf_{\bu \:\!  \in \;\! \mathbb{S}^{\:\! d - 1 \:\!\!}} \bm{\rho}_{\:\! \cE_{\:\! \bracket{\bX\:\!\!,\bu}}}\:\!\!\big{(}1 , \bracket{\bx\:\!,\:\!\!\bu}\big{)} \:\! , \quad \bx \in \R^{d \:\!\!} ,
\]
which completes the proof. \qedhere
\end{proof}


\subsection{Multivariate expectile-based inequality indices}
\label{subsec:multivariate-inequality-indices}

The expectile-based approach to inequality measurement developed in Section~\ref{sec:inequality-index}
admits a natural extension to the multivariate framework.
Conceptually, this transition amounts to replacing expectile intervals~\eqref{eq:expectiles-regions} by their multidimensional counterparts~\eqref{eq:multivariate-expectile-regions_def}, reflecting joint deviations from the mean along multiple directions, with geometric size now quantified through volume rather than length.
All axiomatic properties discussed therein extend \emph{mutatis mutandis}, with convex-order consistency interpreted in terms of the linear-convex order $\le_{\;\!\text{lcx}}$ (cf. Section~\ref{subsec:multivariate-expectile-regions}), and the only structural difference being the domain of the functional; therefore, they will not be restated.

To this end, let $\psi \colon \mo{[} 0 \:\!,\:\!\! 1/2 \mc{]} \:\!\! \to \R_{\:\! +}$ be a measurable function and assume that
\begin{equation*}
\kappa_{\psi\:\!\!,d}^{+}
\coloneqq \;\!\!
\int_{0}^{1/2} \:\!\! \psi(\alpha) \:\! \bracks*{\;\!\!
\frac{1 - 2\:\!\alpha}{\alpha\:\!(1 - \alpha)}
\;\!\!}^{d \:\!\!} d\alpha < \infty \:\! .
\end{equation*}
In light of Proposition~\ref{cor:ball-rectangle}, we define the \emph{$\psi$-\:\!integrated expectile inequality index} on $\Lped{p}{\bP}{d,+ \:\!\!}$ by
\begin{equation}
\label{eq:multivariate-inequality-index} 
I^{\:\! e}_{\;\!\! \psi \:\!\! , d}(\bX)
\eqdef
\frac{1}{\kappa_{\psi\:\!\!,d}^{+}} \:\! \int_{0}^{1/2} \:\!\! \psi(\alpha) \;\!\! \Vol_{\;\! d}\bracks[\big]{ E_{\;\!\! \widetilde{\bX}\;\!}^{\;\! d}\;\!\!(\alpha) } \;\! d\alpha \:\! , \quad \bX \in \:\! \Big{(} \Lped{p}{\bP}{+} \setminus \Set{\! 0 \!} \! \Big{)}^{\;\!\! d \:\!\!} ,
\end{equation}
where, for each $i = 1 , \dots , d$,
\[
{ \big{(}\:\! \widetilde{\bX} \:\!\big{)} }_{i}
\;\!\! \coloneqq
\widetilde{X_{\:\! i}}
\]
(as in~\eqref{eq:X-std}) and, for any $\bX^{\:\! 0}\;\!\! \in \Lped{p}{\bP}{d,+ \:\!\!}$ with $X^{\;\! 0}_{\;\!\! j} \equiv 0$ for some $j \in \Set{\! 1, \dots, d \:\!\!}$,
\begin{equation*}
I^{\:\! e}_{\;\!\! \psi \:\!\! , d}\big{(} \bX^{\:\! 0} \:\!\big{)}
\coloneqq
0 \:\! .
\end{equation*}
This latter convention provides a natural extension of the index to boundary points of $\Lped{p}{\bP}{d,+ \:\!\!}$, where the normalization is not defined, and is consistent with the fact that the associated expectile regions have empty interior (and thus zero volume) in $\R^d$\;\!\!.

Several multivariate inequality measures rely on marginal aggregation or partial orders that become difficult to operationalize in high dimensions, as documented for instance in~\cite{marshall-olkin-arnold_2011}.
The expectile-based construction as in~\eqref{eq:multivariate-inequality-index} is instead compatible with the linear-convex order, which is characterized through one-dimensional projections and admits a transparent geometric interpretation in terms of the inclusion of multivariate expectile regions; see~\eqref{eq:cx-multivariate-expectiles-regions}.

The $\psi$-\:\!integrated expectile inequality index may be
interpreted as a scale-free measure of multivariate inequality obtained by aggregating, across expectile levels, the geometric size of suitably normalized expectile regions.
The normalization by the componentwise means removes the effect of global scale, while the weighting function
$\psi$ modulates the relative contribution of lower and upper expectile levels, allowing one to emphasize different portions of the joint distribution.
In this sense, $I^{\:\! e}_{\;\!\! \psi \:\!\! , d}$\;\!\! captures heterogeneity through the integration of directional tail imbalances into a single geometric quantity.

In contrast to marginal or componentwise approaches, the volume of the multivariate expectile region conveys intrinsically joint features of the distribution.
In particular, it accounts for the way tail imbalances interact across dimensions, thus encoding forms of dependence-driven heterogeneity that cannot be recovered from univariate summaries alone.

Even when the marginal distributions of the components are fixed, the geometry of the expectile regions, and hence the value of $I^{\:\! e}_{\;\!\! \psi \:\!\! , d}$, may vary substantially depending on the underlying dependence structure, underscoring the multivariate nature of the proposed index.
More precisely, the index does not merely reflect dependence in a probabilistic sense, but captures how tail deviations align or counterbalance across directions, quantifying a form of coordinated multivariate heterogeneity.
Consequently, two random vectors with identical marginals may still exhibit different inequality levels because the index captures genuinely joint distributional features.
This highlights one of the main advantages of the multivariate framework: the ability to detect forms of heterogeneity beyond the reach of purely univariate analyses.

Multivariate inequality may accordingly be viewed as a form of anisotropy of the standardized expectile regions: configurations that are balanced across directions give rise to nearly isotropic shapes, whereas pronounced joint inequalities manifest themselves through elongated or distorted geometries.

As a distinguished example, in line with the construction outlined in Section~\ref{subsec:multivariate-expectile-set}, consider
\[
\psi(\alpha) = \alpha^d \;\!\! , \quad \alpha \in \mo{[} 0 \:\!,\:\!\! 1/2 \mc{]} \:\! .
\]
Then, using also~\eqref{eq:integral}, the functional~\eqref{eq:multivariate-inequality-index} reduces to the explicit form
\begin{equation}
\label{eq:mult-I_dd} 
I^{\:\! e}_{\;\!\! d \;\!\! , d}(\bX)
\coloneqq
\braces*{\:\! \frac{1}{2} + d\:\!2^{\:\! d - 1} \bracks[\Bigg]{\;\! \sum_{k \:\! = \:\! 1}^d \frac{1}{k\:\!2^{\:\! k \:\!\!} } - \ln 2 \;\!} \:\!}^{\;\!\!\! -1} \:\!\!\! 
\Vol_{\;\! d+1} \bracks[\big]{ \cE_{ \widetilde{\bX}\;\!}^{\:\! d}\;\!\! } \:\! , \quad \bX \in \:\! \Big{(} \Lped{p}{\bP}{+} \setminus \Set{\! 0 \!} \! \Big{)}^{\;\!\! d \:\!\!} .
\end{equation}

The index~\eqref{eq:mult-I_dd}, as a rescaled volume of the standardized expectile star-shaped set, assigns greater weight to central expectile levels, thus privileging the structural shape of the joint distribution over extreme directional contributions.
As a result, it provides a multivariate analogue of classical volume-based measures of dispersion, where heterogeneity is quantified through the overall geometric extent of the expectile star-shaped set rather than through marginal summaries.

It is worth noting that the lack of decomposability identified in the univariate setting becomes even more pronounced in the multivariate case, where joint geometric features cannot, in general, be reconstructed from aggregate information on subpopulations or components.

\begin{remark}

In financial applications, the vector $\bX$ may represent the joint returns or losses of a portfolio of assets.
In this case, the index $I^{\:\! e}_{\;\!\! \psi \:\!\! , d}(\bX)$ quantifies a form of multivariate risk heterogeneity that is shaped not merely by marginal tail features, but also by cross-asset dependence in extreme regimes.
In particular, portfolios sharing identical marginal distributions yet exhibiting distinct patterns of tail co-movement may give rise to markedly different values of the index.

Similarly, in economic settings where $\bX$ gathers multiple dimensions of individual well-being (e.g., income, wealth, and consumption), the proposed index captures forms of multidimensional inequality arising from the joint concentration of disadvantages, which remain hidden to marginal inequality measures. 
Here as well, the geometry of multivariate expectile regions offers a natural framework for representing and quantifying such coordinated heterogeneity.

\end{remark}


\section{Conclusions}
\label{sec:conclusions}

The framework developed in this paper suggests several directions for further research. A first natural line concerns extremal problems for expectiles under distributional uncertainty. Since expectiles can be characterized as minimizers of asymmetric quadratic losses, they fit naturally into variational schemes where the functional value appears as an intermediate optimizer. In this perspective, it is of interest to study worst-case expectiles over suitable uncertainty sets of probability measures and to derive closed-form expressions or dual characterizations of the associated extremal distributions. Recent results on extreme-case distortion-type constructions provide a natural starting point for this analysis; see~\cite{shao-zhang_2024}.

A closely related direction is the robustification of expectile-based constructions in the presence of model ambiguity. One may define set-valued objects by robustifying integrated expectile functionals over uncertainty neighborhoods, aiming at systematic conditions ensuring convexity and closedness, and, when available, dual representability of the resulting mappings. This viewpoint aligns with recent developments on robustified and set-valued risk measures; see~\cite{righi-muller_2025}. Connections with multivariate and set-valued expectile-type constructions are also natural in this context; see~\cite{moresco-mailhot-pesenti_2025}. Moreover, duality-based approaches for set-valued convex risk measures suggest a principled route to representation results and stability properties in vector-valued settings; see~\cite{khanh.linh-hamel_2023}.

More generally, the robustification paradigm can be extended beyond expectiles to encompass convex risk and deviation measures in broad generality. Such robustified constructions may yield a unified theory linking convexity preservation, dual representations, and stability properties, both in scalar and in vector-valued frameworks. In particular, the axiomatic viewpoint on deviation measures and their links with risk-measure constructions provide a natural background for such extensions; see~\cite{rockafellar-uryasev-zabarankin_2006}. This program is also closely related to multivariate risk-measure constructions based on selections and their geometric interpretation; see~\cite{molchanov-cascos_2016}.

Another promising direction concerns the interplay between expectile-based ideas and selection-type approaches for random sets. Extending robustification procedures to selection risk measures, where random closed sets replace random vectors, may provide additional geometric insight and lead to new analytical tools for multivariate uncertainty modeling; see~\cite{molchanov-cascos_2016}. Such a viewpoint may also clarify how expectile-type constructions interact with set operations and selection mechanisms that are specific to random-set frameworks.

From an order-theoretic perspective, it would be of particular interest to investigate stochastic orderings in robustified set-valued settings. Establishing dominance and consistency principles under model ambiguity requires a careful understanding of how stochastic orderings interact with set-valued risk measures and their robustified variants. Recent developments on stochastic orderings for set-valued risk measures suggest a natural route to such a theory; see~\cite{mastrogiacomo-tarsia_2026}. Incorporating expectile-based constructions into this order-theoretic framework may yield comparison principles and monotonicity results that mirror those obtained in the law-invariant scalar setting.

Finally, the integrated expectile-based inequality indices introduced in this paper invite further investigation from the viewpoint of ranking and comparison of heterogeneous populations. Much like classical Lorenz--Gini measures, these indices can be used for inequality comparisons across groups or sectors, while additionally reflecting tail asymmetries and magnitude effects encoded by expectiles. A systematic study of their ordering properties, stability under perturbations, and potential advantages in capturing asymmetric or heavy-tailed phenomena constitutes a natural next step, together with an assessment of their empirical performance in realistic applications.




\appendix


\section{Empirical implementation via inverse expectiles} 
\label{sec:appendix_empirical-implementation}

This appendix develops empirical implementations of the integrated expectile functionals introduced throughout the paper.
The key observation is that, for empirical distributions, inverse expectile functions admit explicit piecewise linear--fractional representations, leading to closed-form expressions for the integrals appearing in the definitions of the associated risk, deviation, and geometric functionals.

Let $n \in \N^{\:\! *}\;\!\!$ and $x_1 < x_2 < \dots < x_n$ be distinct observations on $\R$. Denote by $X_n$ the empirical random variable associated with the finite sample $\mo{(} x_1 , \dots , x_n \mc{)}$ and,
for $k = 1 , \dots , n$, introduce the partial sums $S_k \;\!\! \coloneqq \sum_{\:\! i \:\! = \:\! 1}^{\:\! k} x_{\:\! i}$ ($S_n$ equals $n$ times the empirical mean of $X_n$). Then, according to~\eqref{eq:inverse-expectile}, the empirical inverse expectile function satisfies, for any $k = 1 , \dots , n - 1$,
\[
e_{X_n}^{-1}(x)
\equiv
\frac{\sum_{\:\! i \:\! = \:\! 1}^{\:\! k} \:\! ( x - x_{\:\! i} )}{\sum_{\:\! i \:\! = \:\! 1}^{\:\! n} \:\! \abs{\:\! x_{\:\! i} \;\!\! - x \:\!} }
=
\frac{k\:\!x - S_k}{(2\:\!k - n)\:\!x + S_n \;\!\! - 2\:\!S_k} \;\! ,
\quad
x \in \mo{]}\:\! x_k ,\;\!\! x_{k + 1} \;\!\!\mc{[} \:\! .
\]
The function is extended by $0$ for $x \in \mo{]} -\infty \:\!, x_1 \;\!\!\mc{]}$ and by $1$ for $x \in \mo{[} x_n ,\:\!\! \infty \mc{[}$. Consequently, whenever $2\:\!k \neq n$,
\[
\int_{x_k}^{x} \! e_{X_n}^{-1}(y) \, dy
=
\frac{a_k}{c_k} \;\! x - \frac{a_k\:\!d_k \;\!\! - b_k\:\!c_k}{c_k^{\:\! 2}} \;\! \ln\:\!\abs{\;\! c_k\:\!x + d_k \:\!} + C_k \:\! , \quad x \in \mo{]}\:\! x_k ,\;\!\! x_{k + 1} \;\!\!\mc{[} \:\! ,
\]
with $C_k \in \R$, where on each interval $\mo{]}\:\! x_k ,\;\!\! x_{k + 1} \;\!\!\mc{[}$ we set
\[
a_k \;\!\! \coloneqq k \:\! , \quad
b_k \;\!\! \coloneqq -\:\!S_k \:\! , \quad
c_k \;\!\! \coloneqq 2\:\!k - n \:\! , \quad
d_k \;\!\! \coloneqq S_n \;\!\! - 2\:\!S_k \:\! .
\]
In the remaining special case $2\:\!k = n$, the denominator is constant and $e_{X_n}^{-1}$ reduces to an affine function of $x$, yielding a purely polynomial primitive: for $C_k \in \R$,
\[
\frac{a_k}{2\:\!d_k} \;\! x^2 \;\!\! + \frac{b_k}{d_k} \;\! x + C_k \:\! , \quad
x \in \mo{]}\:\! x_k ,\;\!\! x_{k + 1} \;\!\!\mc{[} \:\! .
\]

The same property holds for $1 - e_{X_n}^{-1}(x)$, which on each interval $\mo{]}\:\! x_k ,\;\!\! x_{k + 1} \;\!\!\mc{[}$ can again be written as a rational function of the form
\[
x \longmapsto \frac{\tilde{a}_k\:\!x + \tilde{b}_k}{c_k\:\!x + d_k} \:\! ,
\]
with explicitly determined coefficients $\tilde{a}_k , \tilde{b}_k$.
Therefore, the empirical counterparts of the integrals appearing in the inverse representation of the dispersion functional reduce to finite sums of logarithmic or polynomial terms, obtained by evaluating these closed-form primitives at the grid points $x_k$.

The evaluation of the empirical dispersion functional requires at most $n - 1$ subintervals, corresponding to the linear--fractional pieces of $e_{X_n}^{-1}$.
Since, on each interval, the primitive is available in closed form and can be computed in constant time once the cumulative sums $S_k$ have been precomputed, the overall computational cost grows linearly with the sample size $n$.

Notice that the identity $2\:\!k = n$, giving rise to polynomial primitives, can occur only for even sample sizes.
For odd sample sizes, the denominator $c_k\:\!x + d_k$ never becomes constant on any subinterval.
This distinction has no practical implications: the empirical implementation remains uniform across all sample sizes, and the linear--fractional structure persists on every piece when $n$ is odd.

Building on the preceding empirical framework, the same piecewise structure also permits an explicit empirical evaluation of the geometric quantities associated with the expectile star-shaped set.
In particular, the representation~\eqref{eq:area-expectile-starshaped-set_rho} provides a direct route to the computation of its area through closed-form primitives.
Via~\eqref{eq:inequality-index}, these formulas also yield an empirical implementation of the associated expectile-based measure of inequality under the triangular weighting scheme.

Indeed, for any $k = 1 , \dots , n - 1$ such that $2\:\!k \neq n$ and for $x \in \mo{]}\:\! x_k ,\;\!\! x_{k + 1} \;\!\!\mc{[}$, one has
\[
\int_{x_k}^{x} \;\!\!\! { \big{(}\:\! e_{X_n}^{-1}(y) \big{)} }^{2} dy
=
\frac{a_k^{\:\! 2}}{c_k^{\:\! 2}}\;\!x - \frac{2\:\!a_k (a_k\:\!d_k \;\!\! - b_k\:\!c_k)}{c_k^{\:\! 3}} \:\! \ln\;\!\abs{\;\! c_k\:\!x + d_k \:\!} - \frac{{(a_k\:\!d_k \;\!\! - b_k\:\!c_k)}^2 \;\!\!}{c_k^3} \:\! \frac{1}{c_k\:\!x + d_k} + C_k \:\! ,
\]
with $C_k \in \R$. When $2\:\!k = n$, this integral instead reduces to
\[
\frac{a_k^{\:\! 2}}{3\:\!d_k^{\:\! 2}}\;\!x^3 \;\!\! + \frac{a_k\:\!b_k}{d_k^{\:\! 2}}\;\!x^2 \;\!\! + \frac{b_k^{\:\! 2}}{d_k^{\:\! 2}}\;\!x + C_k \:\! , \quad x \in \mo{]}\:\! x_k ,\;\!\! x_{k + 1} \;\!\!\mc{[} \:\! , \quad C_k \in \R \:\! .
\]

A completely analogous argument applies to the term involving ${ \big{(}\:\! 1 - e_{X_n}^{-1}(x) \big{)} }^{\;\!\! 2}$\;\!\!;
the same structural considerations remain valid in this case.

Consequently, both the area~\eqref{eq:area-expectile-starshaped-set_rho} and the corresponding inequality functional~\eqref{eq:inequality-index} reduce to the evaluation of closed-form primitives at finitely many breakpoints, yielding explicit empirical implementations.

Overall, the computational complexity remains $\cO(n)$, since the same piecewise linear--fractional structure underlies the entire procedure.


\section{On the convexity of the expectile star-shaped set} 
\label{sec:appendix_convexity}

The geometric profile of the expectile function $e_X$ and of its scaled version $\alpha \mapsto \alpha \:\! e_X(\alpha)$, $\alpha \in \mo{]} 0 \:\!,\:\!\! 1 \;\!\!\mc{[}$, for $X \in \Lped{1 \;\!\!}{\bP}{}$, underlies the interpretation of expectiles as asymmetric measures of location.
In particular, the convexity or concavity of the mapping
\begin{equation}
\label{eq:scaled-expectile} 
\alpha \longmapsto \alpha \:\! e_X(\alpha) \:\! , \quad \alpha \in \mo{]} 0 \:\!,\:\!\! 1/2 \mc{]} \:\! ,
\end{equation}
or, equivalently, of the associated expectile star-shaped set $\cE_{\:\! X \;\!\!}$ (cf. Section~\ref{sec:expectile-starshaped-set}), naturally suggest a geometric perspective closely aligned with Lorenz-type constructions and with the design of inequality-sensitive functionals.
This curvature structure provides an intrinsic explanation of how distributional asymmetry and tail behavior are reflected in the shape of the expectile curve and, consequently, in the associated expectile-based measures of inequality.

To the best of our knowledge, no general results are currently available in the literature characterizing the curvature properties of the transformation~\eqref{eq:scaled-expectile}.
This motivates the derivation of necessary and sufficient conditions for its convexity, which are established in the proposition below.

\begin{proposition}
\label{prop:appendix}
Let $X \in \Lped{1 \;\!\!}{\bP}{}$. Assume that the cumulative distribution function $F_{\:\! X}$ of $X$ is continuously differentiable on $\R$.
Then $e_X$ is twice continuously differentiable on $\mo{]} 0 \:\!,\:\!\! 1 \;\!\!\mc{[}$.
Moreover, the function~\eqref{eq:scaled-expectile} is convex on its domain $\mo{]} 0 \:\!,\:\!\! 1/2 \mc{]}$ if and only if, for every $\alpha \in \mo{]} 0 \:\!,\:\!\! 1/2 \mc{[}$,
\begin{equation}
\label{eq:convexity-derivative} 
\frac{d}{d\alpha} \:\! F_{\:\! X} \big{(} e_X(\alpha) \big{)}
\le
\frac{2 \:\! F_{\:\! X} \big{(} e_X(\alpha) \big{)}}{\alpha \:\! (1 - 2\:\!\alpha)} \:\! .
\end{equation}
\end{proposition}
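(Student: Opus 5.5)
The plan is to differentiate the first-order condition~\eqref{eq:expectiles_1-ord} implicitly and compute the second derivative of $g(\alpha)\coloneqq\alpha\,e_X(\alpha)$ in closed form. It will turn out to factor as a positive quantity times $2F_X(e_X(\alpha))-\alpha(1-2\alpha)\frac{d}{d\alpha}F_X(e_X(\alpha))$, which is exactly the inequality~\eqref{eq:convexity-derivative}.

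\emph{Setup and regularity.} Introduce
\[
G(x)\coloneqq\bE\bracks{(X-x)_-}=\int_{-\infty}^x F_X(s)\,ds
\quad\text{and}\quad
H(x)\coloneqq\bE\bracks{(X-x)_+}=\int_x^\infty(1-F_X(s))\,ds .
\]
Then $G'=F_X$ and $H'=F_X-1$, and since $F_X\in C^1$, both $G$ and $H$ are $C^2$. Set
\[
\Phi(x,\alpha)\coloneqq(1-\alpha)G(x)-\alpha H(x),
\]
so that $e_X(\alpha)$ is the unique zero of $\Phi(\cdot,\alpha)$. The function $\Phi$ is $C^2$ jointly, and
\[
\partial_x\Phi=(1-\alpha)F_X+\alpha(1-F_X)=\alpha+(1-2\alpha)F_X(x)\ge\min\{\alpha,1-\alpha\}>0 .
\]
The implicit function theorem therefore gives $e_X\in C^2(\mo{]}0,1\mc{[})$. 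Note that $F_X\in C^1$ forces $X$ to be non-degenerate.

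\emph{Derivatives.} Write $F\coloneqq F_X(e_X(\alpha))$, $D(\alpha)\coloneqq\alpha+(1-2\alpha)F>0$ and $N(\alpha)\coloneqq G(e_X(\alpha))+H(e_X(\alpha))=\bE\bracks{\abs{X-e_X(\alpha)}}>0$. Differentiating the condition gives
\[
e_X'=N/D>0 .
\]
Next, $N'=(2F-1)e_X'$ and $D'=1-2F+(1-2\alpha)\frac{dF}{d\alpha}$, where $\frac{dF}{d\alpha}=F_X'(e_X)\,e_X'$ exists and is continuous. Hence
\[
e_X''=\frac{N'}{D}-\frac{ND'}{D^2}=\frac{e_X'}{D}\Big[4F-2-(1-2\alpha)\tfrac{dF}{d\alpha}\Big].
\]

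\emph{Key identity.} Compute $g''=2e_X'+\alpha e_X''$ and put it over the common denominator $D$. The $\pm2\alpha$ terms cancel and the $F$-terms combine to $2F$, giving
\[
g''(\alpha)=\frac{e_X'(\alpha)}{D(\alpha)}\Big[2F_X(e_X(\alpha))-\alpha(1-2\alpha)\tfrac{d}{d\alpha}F_X(e_X(\alpha))\Big].
\]
Since $e_X'>0$ and $D>0$, we get $g''(\alpha)\ge0$ on $\mo{]}0,1/2\mc{[}$ if and only if~\eqref{eq:convexity-derivative} holds there, after dividing by $\alpha(1-2\alpha)>0$.

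\emph{Conclusion.} Because $g$ is $C^2$ on $\mo{]}0,1\mc{[}$, it is convex on $\mo{]}0,1/2\mc{]}$ if and only if $g''\ge0$ on the open interval $\mo{]}0,1/2\mc{[}$. Indeed, continuity of $g$ at $1/2$ extends convexity to the closed endpoint. Conversely, convexity forces $g''\ge0$ at interior points.

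The main obstacle is purely algebraic: carrying out the simplification in the key identity correctly, so that everything collapses to the single bracket. The one analytic point that needs care is that $\frac{d}{d\alpha}F_X(e_X(\alpha))$ is meaningful, which the chain rule handles because $F_X$ and $e_X$ are both $C^1$.
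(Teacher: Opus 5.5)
Your proof is correct and takes essentially the paper's route: the same quotient formula $e_X'=\bE\bigl[\lvert X-e_X(\alpha)\rvert\bigr]/\bigl(\alpha+(1-2\alpha)F_X(e_X(\alpha))\bigr)$, the same expression for $e_X''$, and the same reduction of $2e_X'+\alpha e_X''\ge 0$ to~\eqref{eq:convexity-derivative} after cancelling the positive factor $e_X'/D$. The only difference is how you get the $C^2$ regularity: you derive it directly from the implicit function theorem applied to the jointly $C^2$ map $\Phi$, whereas the paper cites classical differentiability results for expectiles and then differentiates the quotient.
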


\begin{proof}
By the classical differentiability results on expectile functionals (see, e.g.,~\cite{newey-powell_1987} and~\cite{bellini-klar-muller-r.gianin_2014}), and under the regularity conditions assumed here, the mapping $e_X$ is continuously differentiable on $\mo{]} 0 \:\!,\:\!\! 1 \;\!\!\mc{[}$ and, for any $\alpha \in \mo{]} 0 \:\!,\:\!\! 1 \;\!\!\mc{[}$, its (positive) derivative in $\alpha$ is given by
\begin{equation}
\label{eq:e'} 
e_{\;\!\! X}^{\:\! \prime}(\alpha)
=
\frac{\bE\:\!\bracks[\Big]{\:\! \abs[\big]{\:\! X - e_X(\alpha) \:\! } \:\!}}{\alpha + (1 - 2\:\!\alpha) \:\! t_X(\alpha)} \:\! ,
\end{equation}
where
\begin{equation}
\label{eq:t_X} 
t_X(\;\! \bm{\cdot} \;\!)
\coloneqq
F_{\:\! X} \big{(} e_X(\;\! \bm{\cdot} \;\!) \big{)}
\end{equation}
(which is itself continuously differentiable on $\mo{]} 0 \:\!,\:\!\! 1 \;\!\!\mc{[}$).

For completeness, we recall the following integral representation of $e_X$ (cf.~equation~(3) in~\cite{cascos-ochoa_2021}):
\[
e_X(\alpha) = \frac{(1 - \alpha)\:\!\!\int_{\:\! 0}^{\;\! t_{\:\! X}(\alpha)} \:\!\! F_{\:\!\! X}^{\;\! -1}(t) \:\! dt + \alpha\;\!\!\int_{\:\! t_{\:\! X}(\alpha)}^1 \:\!\! F_{\:\!\! X}^{\;\! -1}(t) \:\! dt}{\alpha + (1 - 2\:\!\alpha) \:\! t_X(\alpha)} \:\! , \quad \alpha \in \mo{]} 0 \:\!,\:\!\! 1 \;\!\!\mc{[} \:\! ,
\]
where $F_{\:\!\! X}^{\;\! -1}$ denotes the quantile function of $X$\;\!\!.

From~\eqref{eq:e'}, we first observe that
\[
\frac{d}{d\alpha} \bracks[\big]{ \alpha + (1 - 2\:\!\alpha) \:\! t_X(\alpha) }
=
1 - 2 \;\! t_X(\alpha) + (1 - 2\:\!\alpha) \:\! f_{\:\! X} \big{(} e_X(\alpha) \big{)} \:\! e_{\;\!\! X}^{\:\! \prime}(\alpha) \:\! , \quad \alpha \in \mo{]} 0 \:\!,\:\!\! 1 \;\!\!\mc{[} \:\! .
\]
On the other hand, let
\begin{equation}
\label{eq:f_X} 
f_{\:\! X} \;\!\!
\coloneqq
\frac{d\;\!\!\bP_{\:\! X}}{d\:\!\!\Leb}
\equiv
F_{\:\!\! X}^{\;\! \prime}
\end{equation} 
be the density of $X$\;\!\!. Then, for any $\alpha \in \mo{]} 0 \:\!,\:\!\! 1 \;\!\!\mc{[}$, we may write
\[
\bE\:\!\bracks[\Big]{\:\! \abs[\big]{\:\! X - e_X(\alpha) \:\! } \:\!}
= \;\!\!
\int\nolimits_{\:\! \R} \, \abs[\big]{\;\! x - e_X(\alpha) \:\!} \, f_{\:\! X}(x) \, dx
\]
and, by a straightforward application of the dominated convergence theorem, this term is continuously differentiable on $\mo{]} 0 \:\!,\:\!\! 1 \;\!\!\mc{[}$ with
\[
\frac{d}{d\alpha} \bE\:\!\bracks[\Big]{\:\! \abs[\big]{\:\! X - e_X(\alpha) \:\! } \:\!}
=
- \:\! \big{(} 1 - 2 \;\! t_X(\alpha) \big{)} \:\! e_{\;\!\! X}^{\:\! \prime}(\alpha) \:\! , \quad \alpha \in \mo{]} 0 \:\!,\:\!\! 1 \;\!\!\mc{[} \:\! .
\]
Consequently, the asserted regularity of the expectile function is established and, more precisely, a direct differentiation of~\eqref{eq:e'} using the two derivative computations obtained above yields
\begin{equation} 
\label{eq:second-derivative}
e_{\;\!\! X}^{\:\! \prime\prime}(\alpha)
=
- \;\! \frac{e_{\;\!\! X}^{\:\! \prime}(\alpha)}{\alpha + (1 - 2\:\!\alpha) \;\! t_X(\alpha)} \;\! \braces[\Big]{\;\! 2 \:\! \big{(} 1 - 2 \;\! t_X(\alpha) \big{)} + (1 - 2\:\!\alpha) \:\! f_{\:\! X} \big{(} e_X(\alpha) \big{)} \:\! e_{\;\!\! X}^{\:\! \prime}(\alpha) \:\!} \:\! .
\end{equation}

It remains to analyze the convexity of the mapping~\eqref{eq:scaled-expectile}, that is, $\alpha \mapsto \alpha \:\! e_X(\alpha)$, $\alpha \in \mo{]} 0 \:\!,\:\!\! 1/2 \mc{]}$, which is convex if and only if, for every $\alpha \in \mo{]} 0 \:\!,\:\!\! 1/2 \mc{[}$,
\[
2 \:\! e_{\;\!\! X}^{\:\! \prime}(\alpha) + \alpha \:\! e_{\;\!\! X}^{\:\! \prime\prime}(\alpha) \ge 0
\]
(by continuity, this extends to $\alpha = 1/2$).
Substituting~\eqref{eq:second-derivative} into the above inequality and dividing both sides by the positive quantity $e_{\;\!\! X}^{\:\! \prime}(\alpha)$, we deduce the equivalent condition
\[
\frac{1}{2} \;\! \alpha \:\! (1 - 2\:\!\alpha) \:\! f_{\:\! X} \big{(} e_X(\alpha) \big{)} \:\! e_{\;\!\! X}^{\:\! \prime}(\alpha) \le t_X(\alpha) \:\! , \quad \alpha \in \mo{]} 0 \:\!,\:\!\! 1/2 \mc{[} \:\! .
\]
Recalling~\eqref{eq:t_X} and~\eqref{eq:f_X}, this is precisely~\eqref{eq:convexity-derivative}. This completes the proof of the proposition. \qedhere
\end{proof}


The differentiability argument employed above relies on the regularity of the distribution function $F_{\:\! X}$ and on standard smoothness-preserving operations, among which repeated differentiation under the integral sign plays a central role.
In fact, the same line of reasoning may be iterated to infer higher-order regularity of the expectile function.
This yields the following immediate corollary.

\begin{corollary}
Let $X \in \Lped{1 \;\!\!}{\bP}{}$ and let $k \in \N^{\:\! *}\;\!\!$. If the cumulative distribution function $F_{\:\! X}$ is $k$-\:\!times continuously differentiable on $\R$, then $e_X$ is $(k + 1)$\:\!-\:\!times continuously differentiable on $\mo{]} 0 \:\!,\:\!\! 1 \;\!\!\mc{[}$.
\end{corollary}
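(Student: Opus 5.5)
The plan is to argue by induction on $k$, taking the proof of Proposition~\ref{prop:appendix} as the template. For the base case $k = 1$ (the $C^1$ assumption on $F_{\:\! X}$), that proof already gives $e_X \in C^2$. For the inductive step, I would show that the right-hand side of~\eqref{eq:e'} gains one order of smoothness whenever $e_X$ and $F_{\:\! X}$ do. This converts the regularity of $e_X$ into a bootstrap.

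First I would establish the $C^1$ regularity of $e_X$ with the implicit function theorem, rather than quoting the literature. Write
\[
G(\alpha , x) \coloneqq \alpha \:\! \bE\:\!\bracks[\big]{ (X - x)_+ } - (1 - \alpha) \:\! \bE\:\!\bracks[\big]{ (X - x)_- } .
\]
Since $\partial_x \bE\:\!\bracks{ (X - x)_+ } = -(1 - F_{\:\! X}(x))$ and $\partial_x \bE\:\!\bracks{ (X - x)_- } = F_{\:\! X}(x)$, the map $G$ is $C^1$ once $F_{\:\! X}$ is continuous. Its $x$-derivative is $-\bracks{\alpha + (1 - 2\alpha) F_{\:\! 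X}(x)}$, which is strictly negative. The implicit function theorem then yields $e_X \in C^1$, with derivative~\eqref{eq:e'}.

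Next I would note the general fact that if $F_{\:\! X} \in C^k$, then $G \in C^{k+1}$ jointly. Indeed $G$ is affine in $\alpha$, and its $x$-derivative is $-\alpha - (1 - 2\alpha) F_{\:\! X}(x)$, which is $C^k$ in $x$ and polynomial in $\alpha$. The $C^k$ version of the implicit function theorem then gives $e_X \in C^{k+1}$ directly, with no explicit induction needed. Alternatively, one can run the induction on~\eqref{eq:e'}: the numerator $\bE\:\!\bracks{\abs{X - e_X(\alpha)}}$ is the composition of $x \mapsto \bE\:\!\bracks{\abs{X - x}}$, which has derivative $2F_{\:\! X}(x) - 1$ and so lies in $C^{k+1}$, with $e_X$. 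The denominator $\alpha + (1 - 2\alpha) F_{\:\! X}(e_X(\alpha))$ is positive and has the regularity of $F_{\:\! X} \circ e_X$, namely $C^{\min(k,j)}$ if $e_X \in C^j$. Starting from $j = 1$ and bootstrapping, $e_X' \in C^{\min(k,j)}$ gives $e_X \in C^{\min(k,j)+1}$, and this stabilizes at $C^{k+1}$.

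The main point needing care is keeping the differentiability count exact, so that one gains $k+1$ orders and not $k+2$. The limiting term is $F_{\:\! X} \circ e_X$ in the denominator, which can be no smoother than $F_{\:\! X}$. I would settle this by citing the $C^k$ implicit function theorem applied to $G \in C^{k+1}$. That also sidesteps the need to differentiate under the integral sign repeatedly, since every derivative of $G$ is expressed explicitly through $F_{\:\! X}$.
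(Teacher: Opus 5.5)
Your argument is correct, and your main line differs from the paper's. The paper gives no separate proof. It only remarks that the differentiation argument of Proposition~\ref{prop:appendix} ``may be iterated.'' That argument takes the $C^1$ regularity of $e_X$ and formula~\eqref{eq:e'} from the literature. It then differentiates~\eqref{eq:e'}, treating $\mathbf{E}[\lvert X-e_X(\alpha)\rvert]$ by dominated convergence and $t_X=F_X\circ e_X$ by the chain rule. This is essentially your secondary bootstrap, including your observation that the factor $F_X\circ e_X$ in the denominator limits the gain to exactly one order.

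Your primary route instead applies the implicit function theorem to the first-order condition. It rests on $\partial_x G=-[\alpha+(1-2\alpha)F_X(x)]<0$ and on the splitting $G(\alpha,x)=\alpha\,\mathbf{E}[\lvert X-x\rvert]-\mathbf{E}[(X-x)_-]$. There both functions of $x$ have derivatives $2F_X-1$ and $F_X$, so they lie in $C^{k+1}$. This route has several advantages:
\begin{itemize}
\item it is self-contained, with no appeal to the literature for the $C^1$ step;
\item it handles every $k$ at once, without induction or repeated differentiation under the integral sign;
\item it makes the exact derivative count transparent;
\item it even gives $e_X\in C^1$ when $F_X$ is merely continuous.
\end{itemize}
The paper's iterative route, in turn, produces explicit formulas for higher derivatives such as~\eqref{eq:second-derivative}, which the paper needs for its convexity criterion.

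Two small presentational points. The theorem you use is the implicit function theorem for $C^{k+1}$ maps, not a ``$C^k$ version.'' You should also say explicitly that strict monotonicity of $G(\alpha,\cdot)$ identifies the local implicit solution with $e_X$.
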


Whenever $t_X(\;\! \bm{\cdot} \;\!) \;\!\! \equiv F_{\:\! X} \big{(} e_X(\;\! \bm{\cdot} \;\!) \big{)} > 0$, inequality~\eqref{eq:convexity-derivative} is equivalent to
\begin{equation}
\label{eq:convexity-derivative*} 
\frac{d}{d\alpha} \ln \bracks[\big]{ F_{\:\! X} \big{(} e_X(\alpha) \big{)} }
\le
\frac{2}{\alpha \:\! (1 - 2\:\!\alpha)} \:\! ,
\end{equation}
which provides a compact differential formulation of the convexity criterion, linking the rate of change of the cumulative distribution evaluated at the expectile to a purely parametric bound in~$\alpha$.

It is also worth remarking that, near the endpoints of $\mo{]} 0 \:\!,\:\!\! 1/2 \mc{[}$, condition~\eqref{eq:convexity-derivative*} tends to be automatically satisfied, since the right-hand side diverges to $\infty$ as $\alpha \downarrow 0$ or $\alpha \uparrow 1/2$, rendering the inequality non-restrictive in the vicinity of those points.

From a qualitative perspective, the inequality is typically satisfied for symmetric or light-tailed distributions, such as the Gaussian, uniform, or Laplace laws, for which the expectile varies smoothly with~$\alpha$.
By contrast, it may fail for highly skewed or heavy-tailed distributions, for instance Pareto or lognormal laws, where $t_X(\alpha)$ may exhibit excessive sensitivity to small perturbations in~$\alpha$, potentially causing the bound in~\eqref{eq:convexity-derivative*} to fail.




\begin{remark}

The regularity assumptions imposed in Proposition~\ref{prop:appendix} on the cumulative distribution function $F_{\:\! X}$ are mainly adopted for analytical convenience and may be relaxed under more refined hypotheses, such as piecewise or almost-everywhere continuous differentiability, provided that suitable conditions are assumed on the behavior of the expectile function $e_X$ and on the local integrability of the density $f_{\:\! X}$.
In such cases, the differentiability of $e_X$ and the associated convexity criterion~\eqref{eq:convexity-derivative} may still be recovered in a weaker sense.

At the same time, the two-point distribution analyzed in Section~\ref{subsec:two-point-expectile} illustrates that convexity of the scaled expectile mapping~\eqref{eq:scaled-expectile} may occur even when the underlying cumulative distribution function is not globally smooth.
Indeed, despite the lack of global differentiability of $F_{\:\! X}$, the expectile function remains sufficiently regular to yield a convex star-shaped configuration.
This example highlights that convexity of the expectile-based geometry is not intrinsically tied to global smoothness of the cumulative distribution function, but rather to the interplay between the corresponding distribution and its expectiles.

\end{remark}


\section{Maximization of expectile-based asymmetry measures} 
\label{sec:appendix_maximization}

We reconsider the maximization problem examined in Remark~\ref{rem:robin-hood} for two-point distributions, given by
\[
\sup_{\alpha \in \mo{]} 0 \:\!, 1/2 \mc{]}} \:\!\! \alpha \:\! \bracks[\big]{ e_X(1 - \alpha) - e_X(\alpha) } \:\! .
\]
This naturally suggests introducing the auxiliary objective function
\[
\alpha \longmapsto \alpha \:\! \bracks[\big]{ e_X(1 - \alpha) - 1 } \:\! , \quad \alpha \in \mo{]} 0 \:\!,\:\!\! 1/2 \mc{]} \:\! ,
\]
and studying the associated maximization problem, whose objective preserves the same qualitative behavior while being algebraically more tractable.
Invoking identity~\eqref{eq:e'} for $e_{\;\!\! X}^{\:\! \prime}$, the corresponding first-order optimality condition becomes
\[
\alpha \bE \:\! \abs[\big]{\:\! X - e_X(1 - \alpha) \:\!} = \bracks[\big]{ (1 - \alpha) + (2\:\!\alpha - 1) \:\! F_{\:\! X} \big{(} e_X(1 - \alpha) \big{)} } \bracks[\big]{ e_X(1 - \alpha) - 1 } \:\! .
\]

In general, this nonlinear equation does not yield any simple universal bound on the maximizer, such as
\[
\alpha^\ast \:\!\! < 1/4 \:\! .
\]
Even assumptions that might a priori be expected to regularize its behavior, including positivity, unit mean, or symmetry of the underlying law, do not provide such a guarantee. 
Rather, the maximizer may approach $1/2$ arbitrarily closely, or may even fail to be unique, as exemplified by the degenerate case $X \equiv 1$ (\:\!$\bP$-\:\!a.s.).

Under distributional symmetry, the relation
\[
e_X(1 - \alpha) - e_X(\alpha) = 2 \:\! \bracks[\big]{ e_X(1 - \alpha) - 1 }
\]
shows that this behavior directly carries over to the full expectile gap $\alpha \:\! \bracks[\big]{ e_X(1 - \alpha) - e_X(\alpha) }$.

To construct a nontrivial example with
\[
\alpha^\ast \:\!\! > 1/4 \:\! ,
\]
it is natural to consider a three-point distribution of the form
\[
X \coloneqq
\begin{cases}
\:\! 0 \:\! , & \text{with probability } p \:\! , \\
\:\! m \:\! , & \text{with probability } q \:\! , \\
\:\! M , & \text{with probability } 1 - p - q \:\! ,
\end{cases}
\qquad p,q \in \mo{]} 0 \:\!,\:\!\! 1 \;\!\!\mc{[},
\]
with $0 < m < 1 < M$, and $p , q \ll 1$, chosen so that  $\bE\:\!\bracks{ X } \equiv q\:\!m + (1 - p - q)\:\!M = 1$.
In this parameter regime, a large mass is concentrated at the upper value $M$\;\!\!, while a small fraction of probability is split between $0$ and $m$, producing a markedly right-skewed yet nondegenerate configuration in which the maximizer $\alpha^\ast \;\!\!$ may indeed lie strictly above $1/4$.
A direct numerical check would confirm this behavior in concrete instances (e.g., for $p = q \approx 0.05$ and $m \approx 0.1$), although, for clarity of exposition, we refrain from carrying out the full symbolic computation here.

However, the phenomenon discussed above is not confined to the simplest case of a two-point distribution.
Consider, for instance, the uniform model
\[
X \;\!\! \sim \cU \mo{[} 0 \:\!,\:\!\! 2 \mc{]} \:\! ,
\]
so that $X > 0$ and $\bE\:\!\bracks{ X } \equiv e_X(1/2\:\!) = 1$. An elementary computation shows that, for every $\alpha \in \mo{]} 0 \:\!,\:\!\! 1 \;\!\!\mc{[}$,
\[
(1 - 2\:\!\alpha)\:\!e_X^2(\alpha) + 4\:\!\alpha \:\! e_X(\alpha) - 4\:\!\alpha = 0 \:\! .
\]
Hence, for $\alpha \neq 1/2$,
\[
e_X(\alpha) = 2 \;\! \frac{-\:\!\alpha + \sqrt{\:\!\alpha\:\!(1 - \alpha)}}{1 - 2\:\!\alpha} \:\! , \quad
e_X(1 - \alpha) - 1 = \frac{1 - 2\:\!\sqrt{\:\!\alpha\:\!(1 - \alpha)}}{1 - 2\:\!\alpha} \:\! .
\]
To analyze the maximization problem for
\[
\alpha \longmapsto \alpha \:\! \bracks[\big]{ e_X(1 - \alpha) - 1 } \:\! , \quad \alpha \in \mo{]} 0 \:\!,\:\!\! 1/2 \mc{]} \;\! ,
\]
one readily verifies that the sign of its first derivative coincides with the sign of the function
\[
g(\alpha) \coloneqq -\:\!4\:\!\alpha^3\:\!\! + 6\:\!\alpha^2\:\!\! - 3\:\!\alpha + \sqrt{\:\!\alpha\:\!(1 - \alpha)} \:\! .
\]
A direct numerical evaluation gives $g(0.22) \approx 0.00205 > 0$ and $g(0.23) \approx -\:\!0.00044 < 0$, implying that $g$ changes sign strictly before $\alpha = 1/4$ and thus that $\alpha^\ast \:\!\! < 1/4$.

Similarly, for the exponential model
\[
X \;\!\! \sim \cE(1) \:\! ,
\]
one can find that, for any $\alpha \in \mo{]} 0 \:\!,\:\!\! 1 \;\!\!\mc{[}$,
\[
(1 - 2\:\!\alpha)\:\!\exp\:\!\!\big{(}\:\!\! -\:\!\!e_X(\alpha) \big{)} = (1 - \alpha)\:\!(1 - e_X(\alpha)) \:\! ,
\]
or, equivalently,
\[
(1 - e_X(\alpha))\:\!\exp\:\!\!\big{(} e_X(\alpha) \big{)} = \frac{1 - 2\:\!\alpha}{1 - \alpha} \:\! .
\]
Using the Lambert\:\!-$W$ function, defined by
\[
W(z) \exp\:\!\!\big{(} W(z) \big{)} = z \:\! , \quad z \in \R \:\! ,
\]
we obtain the explicit representations
\[
e_X(\alpha) = 1 + W\:\!\big{(}\! -\:\!\!\tfrac{1 \:\! - \:\! 2\:\!\alpha}{e\:\!(1 \:\! - \:\! \alpha)} \big{)} \:\! , \quad
e_X(1 - \alpha) - 1 = W\:\!\big{(} \tfrac{1 \:\! - \:\! 2\:\!\alpha}{e\:\!\alpha} \big{)} \:\! .
\]
Since $W^{\:\! \prime}\;\!\!(z) = \frac{W(z)}{z\:\!\big{(} 1 \:\! + \:\! W(z) \big{)}}$ for $z \neq 0$ ($\alpha \neq 1/2$), and $W(\;\! \bm{\cdot} \;\!) > 0$, a straightforward computation shows that the first derivative of the mapping
\[
\alpha \longmapsto \alpha \:\! \bracks[\big]{ e_X(1 - \alpha) - 1 } \:\! , \quad \alpha \in \mo{]} 0 \:\!,\:\!\! 1/2 \mc{]}
\]
has the same sign as
\[
h(\alpha) \coloneqq (1 - 2\:\!\alpha)\Big{(}1 + W\:\!\big{(} \tfrac{1 \:\! - \:\! 2\:\!\alpha}{e\:\!\alpha} \big{)} \Big{)} - 1 \:\! .
\]
Moreover, $h(0.19) \approx 0.0141 > 0$ and $h(0.20) \approx -\:\!0.0376 < 0$, which reveal that a sign change occurs strictly before $1/4$, and hence $\alpha^\ast \:\!\! < 1/4$ also in the exponential case.

It is worth emphasizing that, unlike the uniform model, the exponential law is highly right-skewed and supported on $\R_{\:\! +}$.
The fact that the maximizer $\alpha^\ast \;\!\!$ still lies below $1/4$ shows that the concentration of the maximizing asymmetry level in the lower part of $\mo{]} 0 \:\!,\:\!\! 1/2 \mc{[}$ is not a mere consequence of symmetry, but persists in genuinely skewed models as well.

The auxiliary formulation considered above relies on the single branch $e_X(1 - \alpha) - 1$, whose analytic structure remains tractable even when no closed-form expression for $e_X$ is available.
By contrast, working directly with the full expectile gap $e_X(1 - \alpha) - e_X(\alpha)$ would require handling two distinct expectile branches.
In models such as the exponential law, these branches involve Lambert\:\!-$W$ evaluations at different arguments, leading to substantially more intricate first-order conditions.
The simplified formulation therefore retains the essential qualitative behavior while remaining considerably more manageable.

In both models, the first-order condition exhibits a single sign change, ensuring existence and uniqueness of the maximizer.
This property relies critically on the smooth and unimodal structure of the underlying density (linear for the uniform distribution and decreasing for the exponential one), which induces a strictly monotone behavior in the associated expectile equation.

By contrast, uniqueness cannot be expected in more irregular settings.
Whenever the distribution exhibits multiple separated modes, pronounced mass imbalances, or abrupt changes in the slope of its distribution function, the mapping $\alpha \mapsto e_X(1 - \alpha) - 1$ may fail to preserve the strict monotonicity properties that underlie the single sign change observed for the uniform and exponential laws.
In such circumstances, the first-order condition may admit several distinct roots, allowing for the presence of multiple local maximizers of $\alpha \:\! \bracks[\big]{ e_X(1 - \alpha) - 1 }$.

From a geometric viewpoint, this reflects the fact that the associated star-shaped set $\cE_{\:\! X \;\!\!}$ need not be convex (see Appendix~\ref{sec:appendix_convexity}).
If the one-dimensional expectile profile oscillates sufficiently across scales, the induced radial function may lose unimodality, and convexity may break down.


\section{Proof of Proposition~\ref{prop:K_d}} 
\label{sec:appendix_proof-of-K_d}

In order to establish the first claim of Proposition~\ref{prop:K_d}, it is convenient to note that the monotonicity it asserts for $\mathsf{K}_{\;\! \Gamma}$ is equivalent to the strict decrease of its logarithm; that is,
\begin{equation}
\label{eq:log-K} 
\frac{d}{dz} \ln \mathsf{K}_{\;\! \Gamma \:\!}(z) < 0 \:\! , \quad z \in \mo{[} 2 ,\:\!\! \infty \mc{[} \:\! ,
\end{equation}
a fact which we derive by means of the digamma function (namely, the polygamma function of order~$0$).

To this end, let $\psi^{\:\!\! (0) \:\!\!} \colon \mo{]} 0 \:\!,\:\!\! \infty \mc{[} \to \R$ denote the digamma function, defined by
\begin{equation}
\label{eq:digamma} 
\psi^{\:\!\! (0) \:\!\!}(z) = \frac{d}{dz} \ln \Gamma(z) \:\! , \quad z \in \mo{]} 0 \:\!,\:\!\! \infty \mc{[} \:\! .
\end{equation}
By unfolding the expression of $\ln \mathsf{K}_{\;\! \Gamma \:\!}$, namely
\begin{equation}
\label{eq:expr-log-K} 
\ln \mathsf{K}_{\;\! \Gamma \:\!}(z)
=
(z - 1) \ln \Gamma\big{(}\tfrac{z}{2}\big{)} - z \ln \Gamma\big{(}\tfrac{z \:\! - \:\! 1}{2}\big{)} - z \ln (z - 1) - \ln z + \ln 2 \:\! , \quad z \in \mo{]} 1 ,\:\!\! \infty \mc{[} 
\end{equation}
(see~\eqref{eq:K_z}), and differentiating term by term, one obtains, for every $z \in \mo{]} 1 ,\:\!\! \infty \mc{[}$,
\begin{equation}
\label{eq:log-K_comp} 
\begin{split}
\frac{d}{dz} \ln \mathsf{K}_{\;\! \Gamma \:\!}(z) &= \frac{z - 1}{2}\;\!\psi^{\:\!\! (0) \:\!\!}\big{(}\tfrac{z}{2}\big{)} - \frac{z}{2}\;\!\psi^{\:\!\! (0) \:\!\!}\big{(}\tfrac{z \:\! - \:\! 1}{2}\big{)} \\[1.25ex]
&+ \ln \Gamma\big{(}\tfrac{z}{2}\big{)} - \ln \Gamma\big{(}\tfrac{z \:\! - \:\! 1}{2}\big{)} \\[1ex]
&- \ln\:\!(z - 1) - \frac{z}{z - 1} - \frac{1}{z} \:\! .
\end{split}
\end{equation}

To control the two differences occurring in the first two lines of~\eqref{eq:log-K_comp}, we appeal to the classical Gautschi-type bounds for $\psi^{\:\!\! (0) \:\!\!}$; specifically,
\begin{equation}
\label{eq:gautschi} 
\ln z - \frac{1}{z} < \psi^{\:\!\! (0) \:\!\!}(z) < \ln z - \frac{1}{2\:\!z} \;\! , \quad z \in \mo{]} 0 \:\!,\:\!\! \infty \mc{[} \:\! .
\end{equation}
Using both inequalities in~\eqref{eq:gautschi}, we first obtain, for every $z \in \mo{]} 1 ,\:\!\! \infty \mc{[}$,
\begin{equation}
\label{eq:I} 
\begin{split}
\frac{z - 1}{2}\;\!\psi^{\:\!\! (0) \:\!\!}\big{(}\tfrac{z}{2}\big{)} - \frac{z}{2}\;\!\psi^{\:\!\! (0) \:\!\!}\big{(}\tfrac{z \:\! - \:\! 1}{2}\big{)} &< \frac{z - 1}{2} \ln \frac{z}{2} - \frac{z}{2} \ln \frac{z - 1}{2} + \frac{z}{z - 1} - \frac{z - 1}{2\:\!z} \\[0.5ex]
&= \frac{z}{2} \ln \frac{z}{z - 1} - \frac{1}{2} \ln z + \frac{z}{z - 1} - \frac{z - 1}{2\:\!z} + \frac{1}{2} \ln 2 \;\! .
\end{split}
\end{equation}
On the other hand, combining~\eqref{eq:digamma} with the upper bound furnished by~\eqref{eq:gautschi},
\begin{equation}
\label{eq:II} 
\begin{split}
\ln \Gamma\big{(}\tfrac{z}{2}\big{)} - \ln \Gamma\big{(}\tfrac{z \:\! - \:\! 1}{2}\big{)}
\:\! &= \:\!\!
\int\nolimits_{\;\!\!\frac{z \:\! - \:\! 1}{2}}^{\;\!\!\frac{z}{2}} \psi^{\:\!\! (0) \:\!\!}(\zeta) \, d\zeta \\
&< \bracks[\Big]{\;\!\! \big{(}\zeta - \tfrac{1}{2}\big{)} \:\!\! \ln \zeta - \zeta}_{\zeta \:\! = \:\! \frac{z - 1}{2} \;\!\!}^{\zeta \:\! = \:\! \frac{z}{2}} \\
&= \frac{z}{2} \ln \frac{z}{z - 1} + \ln\:\!(z - 1) - \frac{1}{2} \ln z - \frac{1}{2} \ln 2 - \frac{1}{2} \;\! .
\end{split}
\end{equation}

Gathering the estimates in~\eqref{eq:I} and~\eqref{eq:II} with the remaining terms in~\eqref{eq:log-K_comp} yields, for every $z \in \mo{]}1 ,\:\!\! \infty\mc{[}$, 
\begin{equation}
\label{eq:log-K_comp*} 
\frac{d}{dz} \ln \mathsf{K}_{\;\! \Gamma \:\!}(z)
<
z \ln \frac{z}{z - 1} - \ln z - 1 - \frac{1}{2\:\!z} \;\! .
\end{equation}
At this point, by noting that, for every $z \in \mo{]}1 ,\:\!\! \infty\mc{[}$,
\[
\ln \frac{z}{z - 1} \equiv \ln\:\!\!\Big{(}1 + \frac{1}{z - 1}\Big{)} < \frac{1}{z - 1} \:\! ,
\]
we immediately find, for every $z \in \mo{[}2 ,\:\!\! \infty\mc{[}$,
\[
\begin{split}
z \ln \frac{z}{z - 1} - \ln z - 1 - \frac{1}{2\:\!z} &\equiv (z - 1) \ln \frac{z}{z - 1} - \ln\:\!(z - 1) - 1 - \frac{1}{2\:\!z} \\[0.75ex]
&< - \ln\:\!(z - 1) - \frac{1}{2\:\!z} \\[1.25ex]
& < 0 \:\! .
\end{split}
\]
In conjunction with~\eqref{eq:log-K_comp*}, this establishes~\eqref{eq:log-K}. 

We now turn to the asymptotic behavior of $\mathsf{K}_{\;\! \Gamma \:\!}$.  
Recall first Stirling's formula in the form
\[
\Gamma(z + 1) \sim \sqrt{2\:\!\pi} \, \frac{\;\! z^{\;\! z \:\! + \:\! 1/2}}{e^{\:\! z}} \:\! , \quad z \to \infty \;\! ,
\]
where the symbol $\sim$ denotes asymptotic equivalence (the ratio of the two sides tending to $1$).
As a standard consequence, for any fixed $r \in \R$,
\[
\Gamma(z + r) \sim z^{\:\! r} \:\! \Gamma(z) \:\! , \quad z \to \infty \;\! .
\]
Applying this relation to Gamma factors appearing in the expression~\eqref{eq:K_z} readily yields the explicit asymptotic upper bound stated in~\eqref{eq:lim_infty_K_d}.

Finally, turning to the complementary asymptotic regime, observe that $\Gamma(\,\bm{\cdot}\,)$ is continuous and satisfies
\begin{equation}
\label{eq:gamma-function} 
\Gamma(z + 1) = z \;\! \Gamma(z) \:\! , \quad z \in \mo{]} 0 \:\!,\:\!\! \infty\mc{[} \:\! .
\end{equation}
Hence $z \;\! \Gamma(z) \to \Gamma(1) = 1$ as $z \to 0$ (not monotonically), and the limit~\eqref{eq:lim_1_K_d} follows immediately.
It should be stressed, however, that this argument establishes the limit only, and does not provide any information on monotonicity.
The latter will be settled below, where we prove that the function $\ln \mathsf{K}_{\;\! \Gamma \:\!}$ is in fact strictly decreasing on a sufficiently small right neighborhood of~$1$.

With this aim, we return to the explicit logarithmic expansion~\eqref{eq:expr-log-K}.
It proves convenient to reparametrize the variable as $z = 1 + \zeta$, with $\zeta \in \mo{]} 0 \:\!,\:\!\! \infty\mc{[}$ sufficiently small, and thus to examine $\ln \mathsf{K}_{\;\! \Gamma \:\!}(1 + \zeta)$ in place of $\ln \mathsf{K}_{\;\! \Gamma \:\!}(z)$ in a neighborhood of~$1$.
Since differentiation is effected with respect to $z$, this change of variable does not alter the computation: indeed, for any $z^\ast \:\!\! \in \mo{]}1 ,\:\!\! \infty\mc{[}$ (in particular for $z^\ast \;\!\!$ sufficiently close to $1$),
\[
\frac{d}{dz} \ln \mathsf{K}_{\;\! \Gamma \:\!}(z) \, \bigg{|}_{\, z \:\! = \:\! z^\ast \:\!\!}
\:\!\!\! =
\frac{d}{d\zeta} \ln \mathsf{K}_{\;\! \Gamma \:\!}(1 + \zeta) \, \bigg{|}_{\, \zeta \:\! = \:\! z^\ast \;\!\! - \:\! 1} \;\!\! . 
\]
Accordingly, for $\zeta \in \mo{]} 0 \:\!,\:\!\! \infty\mc{[}$, the preceding identity yields
\begin{equation}
\label{eq:expr-log-K_*} 
\ln \mathsf{K}_{\;\! \Gamma \:\!}(1 + \zeta)
=
\zeta \ln \Gamma\big{(}\tfrac{1}{2} + \tfrac{\zeta}{2}\big{)} - (1 + \zeta) \ln \Gamma\big{(}\tfrac{\zeta}{2}\big{)} - (1 + \zeta) \ln \zeta - \ln (1 + \zeta) + \ln 2 \:\! . 
\end{equation}

The representation~\eqref{eq:expr-log-K_*} naturally leads to a local analysis of the two Gamma terms as $\zeta \downarrow 0$. Each of them admits a canonical expansion at the point approached by its argument, and the coefficients may be expressed through the digamma function~\eqref{eq:digamma}. Thus, using the Taylor formula for $\ln\Gamma$ at~$\frac{1}{2}$,
\begin{equation}
\label{eq:taylor_1} 
\ln \Gamma\big{(}\tfrac{1}{2} + \tfrac{\zeta}{2}\big{)}
=
\frac{1}{2} \ln \pi + \frac{\psi^{\:\!\! (0) \:\!\!}\big{(}\tfrac{1}{2}\big{)}}{2} \;\! \zeta + \cO\big{(}\:\!\zeta^{\:\! 2}\:\!\big{)} \:\! , \quad \zeta \downarrow 0 \:\! ,
\end{equation}
where we employed also the identity $\Gamma\big{(}\tfrac{1}{2}\big{)} = \sqrt{\pi}$; whereas the classical small-argument expansion of $\ln\Gamma$, in accordance with~\eqref{eq:gamma-function}, yields
\begin{equation}
\label{eq:taylor_2} 
\ln \Gamma\big{(}\tfrac{\zeta}{2}\big{)}
=
\ln 2 - \ln \zeta + \frac{\psi^{\:\!\! (0) \:\!\!}(1)}{2} \;\! \zeta + \cO\big{(}\:\!\zeta^{\:\! 2}\:\!\big{)} \:\! .
\end{equation}

At this stage, the Euler–Mascheroni constant $\gamma$ naturally appears. 
Indeed, recalling the well-known identity $\psi^{\:\!\! (0) \:\!\!}(1) = -\:\!\gamma$ (which may be established in various ways), one readily derives
\[
\psi^{\:\!\! (0) \:\!\!}\big{(}\tfrac{1}{2}\big{)} = -\:\!\gamma - 2 \ln 2 \:\! ,
\]
for instance, by differentiating the classical Legendre duplication formula.
Consequently,~\eqref{eq:taylor_1} and ~\eqref{eq:taylor_2} assume the more explicit form
\[
\begin{split}
\ln \Gamma\big{(}\tfrac{1}{2} + \tfrac{\zeta}{2}\big{)}
&=
\frac{1}{2} \ln \pi - \frac{1}{2} \:\! \big{(}\gamma + 2 \ln 2\big{)} \:\! \zeta + \cO\big{(}\:\!\zeta^{\:\! 2}\:\!\big{)} \:\! , \\[0.5ex]
\ln \Gamma\big{(}\tfrac{\zeta}{2}\big{)}
&=
\ln 2 - \ln \zeta - \frac{\gamma}{2} \;\! \zeta + \cO\big{(}\:\!\zeta^{\:\! 2}\:\!\big{)} \:\! .
\end{split}
\]
Observing that $\ln (1 + \zeta) = \zeta + \cO\big{(}\:\!\zeta^{\:\! 2}\:\!\big{)}$ as $\zeta \downarrow 0$, and inserting all these expansions into~\eqref{eq:expr-log-K_*}, we obtain
\[
\ln \mathsf{K}_{\;\! \Gamma \:\!}(1 + \zeta)
=
\frac{1}{2} \:\! \big{(}\gamma + \ln \tfrac{\pi}{4} - 2\big{)} \:\! \zeta + \cO\big{(}\:\!\zeta^{\:\! 2}\:\!\big{)} \:\! , \quad \zeta \downarrow 0 \:\! ,
\]
and therefore
\[
\frac{d}{d\zeta} \ln \mathsf{K}_{\;\! \Gamma \:\!}(1 + \zeta)
=
\frac{1}{2} \:\! \big{(}\gamma + \ln \tfrac{\pi}{4} - 2\big{)} + \cO(\:\!\zeta\:\!) \:\! , \quad \zeta \downarrow 0 \:\! .
\]
Since $\gamma + \ln \tfrac{\pi}{4} - 2 < 0$, the final assertion follows at once.









\end{document}